\DeclareMathOperator{\diam}{diam}
\DeclareMathOperator{\hdim}{\dim_H}
\DeclareMathOperator{\bdim}{\dim_B}
\DeclareMathOperator{\Div}{div}
\theoremstyle{plain}
\newtheorem{theorem}{Theorem}[section]
\newtheorem{lemma}[theorem]{Lemma}
\newtheorem{proposition}[theorem]{Proposition}
\newtheorem{corollary}[theorem]{Corollary}
\newtheorem{fact}[theorem]{Fact}
\theoremstyle{definition}
\newtheorem{definition}[theorem]{Definition}
\theoremstyle{remark}
\newtheorem{remark}[theorem]{Remark}
\begin{document}

\title{Convergence exponent of Pierce expansion digit sequences}

\author{Min Woong Ahn}
\address{Department of Mathematics, SUNY at Buffalo, Buffalo, NY 14260-2900, USA}
\email{minwoong@buffalo.edu}
\curraddr{Department of Mathematics Education, Silla University, 140, Baegyang-daero 700beon-gil, Sasang-gu, Busan, 46958, Republic of Korea}
\email{minwoong@silla.ac.kr}

\date{\today}

\subjclass[2020]{Primary 26A21, Secondary 11K55, 28A80, 40A05, 54E52}
\keywords{convergence exponent, Pierce expansion, Hausdorff dimension, Baire class, Baire category, exceptional set}

\begin{abstract}
In this paper, we investigate the convergence exponent of Pierce expansion digit sequences. We explore some basic properties of the convergence exponent as a real-valued function defined on the closed unit interval, as well as those of the level sets of the function. Additionally, we further study subsets of the closed unit interval on which the series of positive $s$th powers of the reciprocals of the Pierce expansion digits diverges.
\end{abstract}

\maketitle

\tableofcontents

\section{Introduction} \label{Introduction}

P{\'o}lya and Szeg{\H o} \cite{PS98} introduced the concept of the {\em convergence exponent} of a non-decreasing sequence of positive real numbers diverging to infinity and defined the convergence exponent of a sequence $\sigma \coloneqq (\sigma_n)_{n \in \mathbb{N}}$ by
\[
\lambda (\sigma) \coloneqq \inf \left\{ s > 0 : \sum_{n \in \mathbb{N}} \frac{1}{\sigma_n^s} \text{ converges} \right\}.
\]
By definition, the convergence exponent $\lambda$ is a mapping from a certain sequence space to the set of non-negative extended real numbers, i.e., the interval $[0,\infty]$.

Kostyrko \cite{Kos79} investigated the convergence exponent $\lambda \colon S \to [0,\infty]$, where the domain $S$, equipped with the topology generated by the Fr{\'e}chet metric, consists of real non-decreasing sequences whose initial terms are bounded below by some fixed positive constant. Among others, the Baire class of the function $\lambda \colon S \to [0,\infty]$ and the Borel hierarchy and the Baire category of the superlevel set $\{ \sigma \in S : \lambda (\sigma) \geq \alpha \}$ and those of the sublevel set $\{ \sigma \in S : \lambda (\sigma) \leq \alpha \}$ for $\alpha \in \mathbb{R}$ were investigated. After a few years, Kostyrko and {\v S}al{\'a}t \cite{KS82} obtained more general results by removing the boundedness condition on the first term. A few more years later, {\v S}al{\'a}t \cite{Sal84} studied the convergence exponent of subsequences. More recently, in \cite{FMSW21} and \cite{SW21}, the authors studied the convergence exponent of digit sequences appearing in the continued fraction expansions and Engel expansions, respectively. 

In this paper, we are concerned with the convergence exponent of Pierce expansion digit sequences, i.e., the mapping $\lambda^* \colon [0,1] \to [0, \infty]$ defined by
\begin{align} \label{definition of lambda star}
\lambda^*(x) \coloneqq \inf \left\{ s > 0 : \sum_{n \in \mathbb{N}} \frac{1}{(d_n(x))^s} \text{ converges} \right\}
\end{align}
for each $x \in [0,1]$. Here, $(d_n(x))_{n \in \mathbb{N}}$ is the sequence of Pierce expansion {\em digits} of $x$. The Pierce expansion and its digit sequence are defined as follows. Given $x \in [0,1]$, we first define
\begin{align} \label{Pierce algorithm 1}
d_1(x) \coloneqq 
\begin{cases}
\left\lfloor 1/x \right\rfloor, &\text{if } x \neq 0; \\
\infty, &\text{if } x = 0,
\end{cases}
\quad \text{and} \quad 
T(x) \coloneqq 
\begin{cases}
1 - d_1 (x) x, &\text{if } x \neq 0; \\
0, &\text{if } x = 0,
\end{cases}
\end{align}
where $\lfloor y \rfloor$ denotes the largest integer not exceeding $y \in \mathbb{R}$. Now, for each $n \in \mathbb{N}$, we inductively define 
\begin{align} \label{Pierce algorithm 2}
d_n(x) \coloneqq d_1 (T^{n-1}(x))
\end{align}
to obtain the Pierce expansion digit sequence of $x$. It is well-known that every $x \in [0,1]$ admits its Pierce expansion of the form
\begin{align} \label{Pierce expansion} 
\begin{aligned}
x 
&= \sum_{k=1}^\infty \left( (-1)^{k+1} \prod_{n=1}^k \frac{1}{d_n(x)} \right) \\
&= \frac{1}{d_1(x)} - \frac{1}{d_1(x)d_2(x)} + \dotsb + \frac{(-1)^{k+1}}{d_1(x) \dotsm d_k(x)} + \dotsb
\end{aligned}
\end{align}
(see, e.g., \cite{Ahn23a, Pie29, Sch95, Sha86}).

Due to Lemma \ref{basic results on lambda star}(\ref{basic results on lambda star 2}) below, $\lambda^*$ is $[0,1]$-valued, and so, we may write $\lambda^* \colon [0,1] \to [0,1]$ in our results. Regarding the basic properties of $\lambda^*$, the main results of this paper are as follows:

\begin{theorem} \label{lambda star is zero a.e.}
The convergence exponent $\lambda^* \colon [0,1] \to [0,1]$ vanishes Lebesgue-almost everywhere.
\end{theorem}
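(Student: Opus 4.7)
The plan is to show that for Lebesgue-almost every $x \in [0,1]$ the Pierce digits $d_n(x)$ grow exponentially fast in $n$; once this is in hand, the series $\sum_{n} d_n(x)^{-s}$ is dominated term-by-term by a convergent geometric series for every $s > 0$, giving $\lambda^*(x) \leq s$ almost everywhere, and intersecting the corresponding full-measure sets along a sequence $s_k \downarrow 0$ yields $\lambda^*(x) = 0$ almost everywhere.

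The key quantitative input is a tail estimate for the Lebesgue measure of the level sets $\{x : d_n(x) \leq M\}$. To derive it, I would first verify by induction on $n$, using \eqref{Pierce algorithm 1}--\eqref{Pierce algorithm 2} and the piecewise-linear form of $T$, that each basic cylinder
\[
\{x \in [0,1] : d_j(x) = k_j \text{ for } 1 \leq j \leq n\}\qquad(1 \leq k_1 < k_2 < \cdots < k_n)
\]
is an interval of length $1/(k_1 k_2 \cdots k_{n-1} \cdot k_n(k_n+1))$. Summing over admissible $k_1,\ldots,k_{n-1}$ yields
\[
\lvert\{x \in [0,1] : d_n(x) = k\}\rvert = \frac{1}{k(k+1)}\, e_{n-1}\!\left(1, \tfrac{1}{2}, \ldots, \tfrac{1}{k-1}\right),
\]
where $e_{n-1}$ is the elementary symmetric polynomial of degree $n-1$. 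The Maclaurin-type inequality $e_{n-1}(x_1,\ldots,x_N) \leq (x_1+\cdots+x_N)^{n-1}/(n-1)!$ applied with $x_i = 1/i$, together with $\sum_{k\geq 1}\frac{1}{k(k+1)} = 1$, then produces the tail bound
\[
\lvert\{x \in [0,1] : d_n(x) \leq M\}\rvert \leq \frac{(1+\log M)^{n-1}}{(n-1)!}.
\]

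Specializing to $M = e^{\alpha n}$ for any fixed $\alpha \in (0, 1/e)$, Stirling's formula reduces the right-hand side to a quantity of order $(e\alpha)^n/\sqrt{n}$, which is summable in $n$. The first Borel--Cantelli lemma then guarantees that for almost every $x$ there exists $N(x)$ with $d_n(x) > e^{\alpha n}$ for all $n \geq N(x)$, completing the argument as sketched above. The main technical obstacle is retaining the factor $(n-1)!$ in the denominator of the tail estimate: without it, the cruder bound $(H_{k-1})^{n-1}$ evaluates to $(\alpha n)^{n-1}$ at $M = e^{\alpha n}$ and already fails to be summable, so one must exploit the strict monotonicity $d_1 < d_2 < \cdots < d_n$ and the ordered-tuple form of the elementary symmetric polynomial rather than its unordered counterpart.
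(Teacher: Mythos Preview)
Your argument is correct. The cylinder-length formula matches Proposition~\ref{I sigma}, the elementary symmetric polynomial bound $e_{n-1}(x_1,\dots,x_N)\le (x_1+\dots+x_N)^{n-1}/(n-1)!$ is valid for non-negative $x_i$, and the Stirling/Borel--Cantelli step goes through for any fixed $\alpha\in(0,1/e)$. One small redundancy: once you have a single full-measure set on which $d_n(x)>e^{\alpha n}$ eventually, the series $\sum_n d_n(x)^{-s}$ converges for \emph{all} $s>0$ simultaneously on that set, so the intersection over a sequence $s_k\downarrow 0$ is unnecessary.

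The paper's proof is entirely different in style: it simply quotes Shallit's law of large numbers for Pierce expansions, which states that $(d_n(x))^{1/n}\to e$ Lebesgue-almost everywhere, and then applies the root test to $\sum_n d_n(x)^{-s}$ to conclude in two lines. Your approach is a self-contained re-derivation of a weaker exponential lower bound (yielding only $\liminf_n (d_n(x))^{1/n}\ge e^{1/e}$ rather than the exact limit $e$), which is nonetheless more than enough for $\lambda^*=0$ almost everywhere. The trade-off is clear: the paper's route is a one-line reduction to an existing metric result, while yours is longer but does not import any black box---and in fact your tail estimate $|\{d_n\le M\}|\le (1+\log M)^{n-1}/(n-1)!$ is essentially the combinatorial core of Shallit's original argument.
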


\begin{theorem} \label{lambda star is surjective}
Any non-empty open subset $U$ of $[0,1]$ satisfies $\lambda^*(U \setminus \mathbb{Q}) = [0,1]$, and, consequently, $\lambda^*(U) = [0,1]$. In particular, $\lambda^* \colon [0,1] \to [0,1]$ is surjective.
\end{theorem}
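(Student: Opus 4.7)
The plan is to prove the stronger claim $\lambda^*(U \setminus \mathbb{Q}) = [0,1]$ constructively: for each non-empty open $U \subseteq [0,1]$ and each target value $\alpha \in [0,1]$, I will exhibit an irrational $x \in U$ whose Pierce digit sequence has convergence exponent exactly $\alpha$. The mechanism is to fix an initial block of digits, which forces $x$ into a small Pierce cylinder contained in $U$, and then to append a tail whose growth rate is tuned to produce the prescribed $\alpha$. Surjectivity of $\lambda^*$ on $[0,1]$ then follows by taking $U = [0,1]$.

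The preparatory facts I will rely on, all standard in the Pierce-expansion literature (\emph{cf.}\ \cite{Ahn23a, Pie29, Sha86}), are: (i) any strictly increasing sequence $a_1 < a_2 < \cdots$ of positive integers arises as the Pierce digit sequence, via \eqref{Pierce expansion}, of a unique $x \in (0,1]$, and $x$ is irrational precisely when the sequence is infinite; and (ii) for each admissible finite prefix $(a_1,\ldots,a_N)$, the cylinder $\{x \in [0,1] : d_i(x) = a_i \text{ for } 1 \leq i \leq N\}$ is an interval whose diameter tends to zero as $N \to \infty$. Given $U$, I pick an irrational $y \in U$ and choose $N$ large enough that the cylinder determined by the first $N$ Pierce digits of $y$ lies inside $U$; write $(a_1,\ldots,a_N)$ for this prefix.

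For $\alpha \in (0,1]$, I extend the prefix by setting
\[
d_{N+k} := a_N + k + \lceil k^{1/\alpha} \rceil \qquad (k \in \mathbb{N}),
\]
which produces a strictly increasing integer sequence with $d_{N+k} \asymp k^{1/\alpha}$, so that $\sum_{n} (d_n(x))^{-s}$ is comparable to $\sum_k k^{-s/\alpha}$; this converges iff $s > \alpha$, yielding $\lambda^*(x) = \alpha$. For $\alpha = 0$, I use a super-polynomial tail, for example $d_{N+k} := a_N + 2^k$, so that the series converges for every $s > 0$ and $\lambda^*(x) = 0$. In each case the resulting $x$ lies in the chosen cylinder (hence in $U$), is irrational by (i) because its Pierce expansion is infinite, and has convergence exponent $\alpha$.

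The main obstacle is book-keeping rather than conceptual: one must verify that Pierce cylinders really are intervals with diameter vanishing as $N \to \infty$, and that any prescribed strictly increasing integer tail is genuinely realized as the Pierce digit sequence of the corresponding value \eqref{Pierce expansion}. Both should follow by induction on $N$ from the recursive definitions \eqref{Pierce algorithm 1}--\eqref{Pierce algorithm 2}, using at each step the relation $x \in (1/(d_1(x)+1),\, 1/d_1(x)]$; once these are in hand, the remainder of the argument reduces to the elementary convergence-exponent calculation above.
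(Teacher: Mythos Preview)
Your proposal is correct and follows essentially the same approach as the paper: both arguments locate a Pierce cylinder inside $U$ and then append an explicit tail of prescribed growth to realize each $\alpha\in[0,1]$ as a convergence exponent. The only cosmetic difference is that the paper first transfers the problem to the sequence space $\Sigma$ via the homeomorphism $f|_{\mathbb{I}}\colon\mathbb{I}\to\Sigma_\infty$ and proves the analogous statement there (Theorem~\ref{neighborhood image theorem}), whereas you carry out the identical construction directly in $[0,1]$; the cylinder and bijection facts you flag as ``book-keeping'' are exactly Proposition~\ref{I sigma} and Proposition~\ref{f is homeo} in the paper.
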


\begin{corollary} \label{lambda star has intermediate value property}
The convergence exponent $\lambda^* \colon [0,1] \to [0,1]$ has the intermediate value property, i.e., for any $x, y \in [0,1]$ with $x<y$ and $\lambda^*(x) \neq \lambda^*(y)$, if $c$ is strictly between $\lambda^*(x)$ and $\lambda^*(y)$, then there exists $z \in (x,y)$ such that $\lambda^*(z) = c$.
\end{corollary}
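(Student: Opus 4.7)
The plan is to derive the corollary directly from Theorem \ref{lambda star is surjective}, with essentially no extra work. Given $x, y \in [0,1]$ with $x < y$, set $U \coloneqq (x,y)$. Then $U$ is a non-empty open subset of $[0,1]$, so Theorem \ref{lambda star is surjective} yields $\lambda^*(U) = [0,1]$ (indeed, even $\lambda^*(U \setminus \mathbb{Q}) = [0,1]$, which will not be needed here).

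Now let $c$ be strictly between $\lambda^*(x)$ and $\lambda^*(y)$. Since $\lambda^*$ takes values in $[0,1]$ by Lemma \ref{basic results on lambda star}(\ref{basic results on lambda star 2}), both $\lambda^*(x)$ and $\lambda^*(y)$ belong to $[0,1]$, and hence so does $c$. Applying the equality $\lambda^*(U) = [0,1]$, one obtains some $z \in U = (x,y)$ with $\lambda^*(z) = c$, which is precisely the intermediate value property.

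There is no genuine obstacle at the level of this corollary: the substantive content has been absorbed into Theorem \ref{lambda star is surjective}, whose proof carries the burden of constructing, inside an arbitrary open subinterval of $[0,1]$, a point whose Pierce digits have a prescribed convergence exponent. Once that surjectivity-on-every-open-set statement is in hand, the intermediate value property is a purely formal consequence, and the proof of the corollary reduces to the two displayed observations above.
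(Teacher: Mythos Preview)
Your proof is correct and follows essentially the same approach as the paper, which simply states that the corollary is immediate from Theorem~\ref{lambda star is surjective}. Your write-up spells out the straightforward details that the paper omits.
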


\begin{corollary} \label{lambda star discontinuous corollary}
The convergence exponent $\lambda^* \colon [0,1] \to [0,1]$ is discontinuous everywhere.
\end{corollary}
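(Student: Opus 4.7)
The proof will follow almost immediately from Theorem \ref{lambda star is surjective}: since every non-empty relatively open subset of $[0,1]$ is mapped onto the whole interval $[0,1]$ by $\lambda^*$, the function must take values near $0$ and values near $1$ in every neighbourhood of every point of $[0,1]$, and this is manifestly incompatible with continuity. So the strategy is just to negate the $\varepsilon$-$\delta$ definition of continuity at an arbitrary point using the surjectivity on arbitrarily small neighbourhoods.

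Concretely, I would fix an arbitrary $x_0 \in [0,1]$ and take $\varepsilon \coloneqq 1/2$. Because $\lambda^*(x_0) \in [0,1]$ by Lemma \ref{basic results on lambda star}, at least one of the two endpoints $0$ and $1$ lies at distance at least $\varepsilon$ from $\lambda^*(x_0)$; fix such an endpoint and call it $c$. For any $\delta > 0$, set $U_\delta \coloneqq [0,1] \cap (x_0 - \delta, x_0 + \delta)$, which is a non-empty relatively open subset of $[0,1]$ (this covers the endpoint cases $x_0 \in \{0,1\}$ uniformly via the subspace topology). Theorem \ref{lambda star is surjective} applied to $U_\delta$ then yields a point $y_\delta \in U_\delta$ with $\lambda^*(y_\delta) = c$, and therefore $|y_\delta - x_0| < \delta$ while $|\lambda^*(y_\delta) - \lambda^*(x_0)| \geq \varepsilon$. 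As $\delta > 0$ was arbitrary, this contradicts continuity of $\lambda^*$ at $x_0$, and as $x_0 \in [0,1]$ was arbitrary, the function is discontinuous everywhere.

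There is no substantive obstacle here: the nontrivial content has already been absorbed into Theorem \ref{lambda star is surjective}, and the corollary amounts to the soft observation that a function whose value set on every non-empty open set is the full codomain cannot be continuous at any point. The only details worth mentioning are the harmless handling of $x_0 \in \{0,1\}$ via the subspace topology on $[0,1]$, and the remark that the choice of the witnessing value $c \in \{0,1\}$ depends on where $\lambda^*(x_0)$ sits inside $[0,1]$ but can be made independently of $\delta$.
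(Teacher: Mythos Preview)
Your proposal is correct and takes essentially the same approach as the paper, which simply states that the corollary is an immediate consequence of Theorem~\ref{lambda star is surjective}; you have merely spelled out the one-line deduction in full $\varepsilon$--$\delta$ detail.
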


Recall that a function $g \colon X \to \mathbb{R}$ on a metrizable space $X$ is said to be {\em of Baire class one (on $X$)} if there exists a sequence of real-valued continuous functions $(g_n)_{n \in \mathbb{N}}$ such that $g_n \to g$ pointwise as $n \to \infty$ everywhere on $X$. Similarly, we say that $g \colon X \to \mathbb{R}$ is {\em of Baire class two (on $X$)} if $g$ is the pointwise limit of some sequence of functions $(g_n)_{n \in \mathbb{N}}$, where each $g_n$ is of Baire class one on $X$. (See \cite[Chapter 24]{Kec95} for basic definitions and results on the Baire classification of functions.) 

\begin{corollary} \label{lambda star not first Baire corollary}
The convergence exponent $\lambda^* \colon [0,1] \to [0,1]$ is not of Baire class one.
\end{corollary}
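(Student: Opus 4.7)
The plan is to deduce this corollary directly from Corollary \ref{lambda star discontinuous corollary} together with a classical theorem from descriptive set theory: on a Baire space $X$, every real-valued function of Baire class one has a comeager (in particular, dense) set of continuity points (see, e.g., \cite[Theorem 24.14]{Kec95}). Since $[0,1]$ is a complete metric space, it is Baire by the Baire Category Theorem, so this theorem applies.

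First I would invoke Corollary \ref{lambda star discontinuous corollary} to conclude that the set of continuity points of $\lambda^* \colon [0,1] \to [0,1]$ is empty. Second, I would contrast this with the comeagerness required by the cited theorem: the empty set is certainly not comeager in $[0,1]$, since a comeager subset of the nonempty Baire space $[0,1]$ must be nonempty (indeed dense). Hence the assumption that $\lambda^*$ is of Baire class one leads to a contradiction, completing the proof.

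There is no real obstacle here; the entire content of the corollary sits in Corollary \ref{lambda star discontinuous corollary}, which is the nontrivial input established earlier. The present proof is a one-line contrapositive application of the standard Baire-class-one continuity-point theorem, and no further computation with Pierce digits is needed.
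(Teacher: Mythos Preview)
Your proposal is correct and mirrors the paper's own argument: the paper invokes Corollary~\ref{lambda star discontinuous corollary} together with Proposition~\ref{not first Baire proposition} (which is precisely \cite[Theorem 24.14]{Kec95}) and the Baire category theorem on the complete metric space $[0,1]$. The only cosmetic difference is that the paper phrases the cited theorem in terms of the discontinuity set being meager rather than the continuity set being comeager.
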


\begin{theorem} \label{lambda star second Baire}
The convergence exponent $\lambda^* \colon [0,1] \to [0,1]$ is of Baire class two.
\end{theorem}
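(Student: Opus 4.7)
The plan is to control the Borel complexity of the level sets of $\lambda^*$ by proving that, for each $s>0$, the auxiliary function
\[
\Lambda_s(x) \coloneqq \sum_{n=1}^{\infty} \frac{1}{(d_n(x))^s} \in [0,\infty]
\]
is lower semicontinuous on $[0,1]$ (with the conventions $d_n(x) = \infty$ when the Pierce expansion of $x$ terminates before position $n$, and $1/\infty^s = 0$). Once this is established, both $\{\lambda^* > c\}$ and $\{\lambda^* < c\}$ will lie in $G_{\delta\sigma}$ for every $c$, and the standard characterization of Baire class two via the Borel complexity of level sets (\cite[Chapter 24]{Kec95}) will give the conclusion.

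To obtain the semicontinuity of $\Lambda_s$, I would first prove that every digit function $d_n \colon [0,1] \to \mathbb{N} \cup \{\infty\}$ is upper semicontinuous, by induction on $n$. The stronger inductive statement is that $\{x \in [0,1] : d_n(x) \geq k\}$ is a finite union of closed intervals for every $k \in \mathbb{N}$. The base case $n=1$ is immediate from $\{d_1 \geq k\} = [0, 1/k]$. For the inductive step, one uses the identity $\{d_{n+1} \geq k\} = T^{-1}(\{d_n \geq k\})$ and analyzes $T^{-1}$ cylinder by cylinder: on each cylinder $(1/(j+1), 1/j]$ the map $T$ restricts to the affine bijection $x \mapsto 1 - jx$ onto $[0, 1/(j+1))$, the convention $T(1/j) = 0$ supplies the missing right endpoint of the preimage, and for $j$ sufficiently large the whole cylinder is swept into $T^{-1}([0, b])$, so that infinitely many cylinders collapse into a single interval of the form $[0, 1/J]$ and the resulting union stays finite. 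With $d_n$ upper semicontinuous, each $1/(d_n(x))^s$ is lower semicontinuous, hence so are the partial sums $\sum_{n=1}^N 1/(d_n(x))^s$, and therefore so is their non-decreasing pointwise supremum $\Lambda_s$.

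It then follows that $\{\Lambda_s \leq M\}$ is closed for every $M \in \mathbb{N}$, so $\{\Lambda_s < \infty\} = \bigcup_M \{\Lambda_s \leq M\}$ is $F_\sigma$. Using that $s \mapsto \Lambda_s(x)$ is non-increasing on $(0,\infty)$, one obtains
\[
\{\lambda^* \leq c\} = \bigcap_{s \in \mathbb{Q} \cap (c, \infty)} \{\Lambda_s < \infty\} \quad \text{and} \quad \{\lambda^* < c\} = \bigcup_{s \in \mathbb{Q} \cap (0, c)} \{\Lambda_s < \infty\},
\]
which makes $\{\lambda^* \leq c\}$ an $F_{\sigma\delta}$ set (so that $\{\lambda^* > c\}$ is $G_{\delta\sigma}$) and $\{\lambda^* < c\}$ an $F_\sigma$ set, both lying in $G_{\delta\sigma}$. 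This is exactly the level-set condition for Baire class two on the Polish space $[0,1]$, and the theorem follows.

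The main obstacle is the inductive semicontinuity step for $d_n$: the map $T$ itself is only lower semicontinuous (at each $1/j$ it has value $0$ but right-limit $1/j$), so ``preimage of closed is closed'' does not hold in general for $T$. One must exploit the fact that every $\{d_n \geq k\}$ contains an initial segment of the form $[0, \alpha]$ in order to absorb the accumulation of preimage points $\{1/j\}_{j \in \mathbb{N}}$ near the origin and keep each preimage a \emph{finite} union of closed intervals; everything after that step is a routine manipulation in the Borel hierarchy.
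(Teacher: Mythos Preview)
Your approach is correct and takes a genuinely different route from the paper's proof. The paper works in the sequence space $\Sigma$: it writes $\lambda^*(x) = \inf_{\upsilon \in \varphi^{-1}(\{x\})} \limsup_n \psi_n(\upsilon)$ using the continuous surjection $\varphi \colon \Sigma \to [0,1]$ and the continuous coordinate functions $\psi_n(\sigma)=\log n/\log\sigma_n$, and then invokes an abstract lemma (Lemma~\ref{main lemma}) stating that such an infimum-over-fibers of a $\limsup$ of lower semicontinuous functions is a double pointwise limit of continuous functions, hence Baire class two. Your proof stays entirely on $[0,1]$: you establish upper semicontinuity of each digit map $d_n$ directly, pass to lower semicontinuity of $\Lambda_s$, and read off the Borel complexity $\{\lambda^*\le c\}\in F_{\sigma\delta}$, $\{\lambda^*<c\}\in F_\sigma$, invoking the Lebesgue--Hausdorff--Banach characterization of Baire classes. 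Your route is more elementary and self-contained (no auxiliary compact space, no Proposition~\ref{infimum of lsc is lsc proposition}), and it yields slightly sharper Borel information on the sub- and superlevel sets as a by-product; the paper's route, on the other hand, reuses the $\Sigma$-machinery it has already built and produces an explicit double-limit representation. One point worth tightening in your sketch: the cleanest way to close the inductive step is not to track interval endpoints but simply to observe that $T$ is continuous off the countable set $\{1/j : j\in\mathbb{N}\}$, and at each $1/j$ one has $T(1/j)=0\in\{d_n\ge k\}$; hence any limit point of $T^{-1}(\{d_n\ge k\})$ is either a continuity point of $T$ (and then closedness of $\{d_n\ge k\}$ suffices) or some $1/j$, which lies in $T^{-1}(\{d_n\ge k\})$ automatically.
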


For each $\alpha \in [0,1]$, we denote by $L_\alpha^*$ the $\alpha$-level set of $\lambda^* \colon [0,1] \to [0,1]$, i.e.,
\[
L_\alpha^* \coloneqq \{ x \in [0,1] : \lambda^* (x) = \alpha \}.
\] 
By Theorem \ref{lambda star is zero a.e.}, we see that $L_0^*$ is of positive Lebesgue measure, hence of full Hausdorff dimension (see \cite[p.~45]{Fal14}), while $L_\alpha^*$ is of null Lebesgue measure for every $\alpha \in (0,1]$. We shall determine dimensions and topological sizes of $L_\alpha^*$ for each $\alpha \in [0,1]$. (See Section \ref{Preliminaries} for the definitions of the Hausdorff dimension and the box-counting dimension. We refer the reader to \cite{Fal14} for further definitions and results on dimensions.) Specifically, our main results in this regard are as follows:

\begin{theorem} \label{hdim L alpha star theorem}
The $\alpha$-level set $L_\alpha^*$ has Hausdorff dimension $1-\alpha$ for each $\alpha \in [0,1]$.
\end{theorem}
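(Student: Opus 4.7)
For $\alpha = 0$ the claim is immediate: Theorem \ref{lambda star is zero a.e.} yields that $L_0^*$ contains a set of positive Lebesgue measure, hence $\hdim L_0^* = 1$. Fix $\alpha \in (0,1]$ throughout the remainder. My plan begins with the P\'olya--Szeg\H{o} identity
\[
\lambda^*(x) = \limsup_{n \to \infty} \frac{\log n}{\log d_n(x)} \qquad (x \in (0,1] \setminus \mathbb{Q}),
\]
which is legitimate because the Pierce digits are strictly increasing and diverge to infinity. This converts membership in $L_\alpha^*$ into a precise growth condition on the digit sequence, and reduces the theorem to proving matching upper and lower bounds.

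\emph{Upper bound.} From (\ref{Pierce expansion}) a direct computation gives the cylinder length $|I(d_1,\dots,d_n)| = 1/(d_1 \cdots d_n (d_n+1))$. Since
\[
L_\alpha^* \subseteq \{x : \lambda^*(x) \geq \alpha\} = \bigcap_{m > 1/\alpha} \bigl\{ x : d_n(x) \leq n^{1/(\alpha - 1/m)}\ \text{infinitely often} \bigr\},
\]
I would cover the $m$-th set by all Pierce cylinders of depth $n \geq N$ whose last digit is bounded by $M_n := n^{1/(\alpha - 1/m)}$. The elementary-symmetric estimate
\[
\sum_{1 \leq d_1 < \cdots < d_n \leq M_n} \frac{1}{(d_1 \cdots d_n)^s} \leq \frac{1}{n!}\Biggl(\sum_{d \leq M_n} \frac{1}{d^s}\Biggr)^n \asymp \frac{(CM_n^{1-s})^n}{n!} \qquad (0 < s < 1),
\]
combined with Stirling's formula, makes the resulting $s$-dimensional Hausdorff sum convergent whenever $s > 1 - \alpha + 1/m$. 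Sending $m \to \infty$ yields $\hdim L_\alpha^* \leq 1 - \alpha$.

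\emph{Lower bound.} I would construct a Cantor-type subset of $L_\alpha^*$ by prescribing the admissible digits. Choose $c > 0$ small enough that, with
\[
J_n \coloneqq \Bigl\{ d \in \mathbb{N} : \lfloor n^{1/\alpha} \rfloor \leq d < \lfloor n^{1/\alpha} \rfloor + \lfloor c\,n^{1/\alpha - 1} \rfloor\Bigr\},
\]
one has $\max J_n < \min J_{n+1}$ for all large $n$, and let $F_\alpha$ consist of all $x \in (0,1]$ with $d_n(x) \in J_n$ for every $n \geq n_0$. Any such $x$ satisfies $\log d_n(x)/\log n \to 1/\alpha$, hence $\lambda^*(x) = \alpha$, so $F_\alpha \subseteq L_\alpha^*$. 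On $F_\alpha$, the natural probability measure $\mu$ distributes equal mass $\prod_{k=n_0}^n |J_k|^{-1}$ over each rank-$n$ cylinder. Using $|J_k| \asymp k^{1/\alpha - 1}$, the cylinder-length formula above, and Stirling, both $-\log \mu(\text{cyl})$ and $-\log|\text{cyl}|$ turn out asymptotic to positive multiples of $n \log n$, whose ratio tends to $(1/\alpha - 1)/(1/\alpha) = 1 - \alpha$. The mass distribution principle then delivers $\hdim F_\alpha \geq 1 - \alpha$.

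The principal technical obstacle is the lower bound: $\mu$ is not self-similar, cylinder diameters shrink super-exponentially, and converting cylinder-mass estimates to ball-mass estimates requires a uniform lower bound on the gap between adjacent rank-$n$ cylinders of $F_\alpha$. Such a separation estimate is accessible from (\ref{Pierce expansion}) but demands careful bookkeeping, and the width parameter $c\,n^{1/\alpha-1}$ must simultaneously be large enough to realize dimension $1 - \alpha$ and small enough to preserve the gap; reconciling these two constraints is the crux of the argument.
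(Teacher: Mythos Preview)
Your proposal is correct in outline, and both halves take a genuinely different route from the paper.

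For the upper bound, the paper first proves the coarser estimate $\hdim \bigcup_{\gamma \in [\alpha,\beta]} L_\gamma^* \le \beta(\alpha^{-1}-1)$ (Lemma~\ref{hidm upper bound lemma}) via a \emph{two-sided} digit constraint, requiring $d_j \ge j^{1/(\beta+\varepsilon)}$ for $N \le j \le k$ together with $d_k \le k^{1/(\alpha-\varepsilon)}$, and then sharpens this to $1-\alpha$ by subdividing $[\alpha,\beta]$ into many short subintervals and taking a maximum (Lemma~\ref{hdim formula lemma}). Your one-sided covering, combined with the elementary-symmetric bound $\sum_{d_1<\dots<d_n\le M} (d_1\cdots d_n)^{-s} \le (n!)^{-1}\bigl(\sum_{d\le M} d^{-s}\bigr)^n$, reaches $1-\alpha$ directly and in fact yields the stronger statement $\hdim\{x:\lambda^*(x)\ge\alpha\}\le 1-\alpha$ without any subdivision step.

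For the lower bound, the paper does essentially no work of its own: it invokes Proposition~\ref{hdim E alpha}, imported from \cite{Ahn23b}, which already asserts $\hdim E(\alpha)=1-\alpha$ for the subset $E(\alpha)=\{x:\lim_{n} \log n/\log d_n(x)=\alpha\}\subseteq L_\alpha^*$. Your explicit Cantor construction with digit windows $J_n$ and the mass distribution principle is self-contained and amounts to reproving that cited result. The gap-versus-width tension you flag is real but resolvable: since $(n+1)^{1/\alpha}-n^{1/\alpha}\sim \alpha^{-1}n^{1/\alpha-1}$, any fixed $c<1/\alpha$ secures both $\max J_n<\min J_{n+1}$ and $|J_n|\asymp n^{1/\alpha-1}$ for large $n$. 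One small point: at $\alpha=1$ your windows collapse ($\lfloor c n^{0}\rfloor=0$ for $c<1$), but there the lower bound $\hdim L_1^*\ge 0$ is vacuous anyway, so you should simply note that case separately.
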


As is well known, a subset $A$ of a topological space $X$ is said to be {\em meager} in $X$ if $A$ can be written as a countable union of nowhere dense sets in $X$; otherwise, $A$ is {\em nonmeager} in $X$. If $X \setminus A$ is meager in $X$, we say that $A$ is {\em comeager} in $X$. (We refer the reader to \cite{Oxt80} for further details on the Baire category of sets.)

\begin{theorem} \label{L alpha star dense G delta theorem}
The $\alpha$-level set $L_\alpha^*$ is dense but not $G_\delta$ in $[0,1]$ for each $\alpha \in [0,1)$, and the $1$-level set $L_1^*$ is dense $G_\delta$ in $[0,1]$. Consequently, $L_\alpha^*$ is meager in $[0,1]$ for each $\alpha \in [0,1)$, and $L_1^*$ is comeager and nonmeager in $[0,1]$.
\end{theorem}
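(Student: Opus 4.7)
My plan is to reduce the theorem to standard Baire-category bookkeeping once I show that $L_1^*$ is $G_\delta$. Density of $L_\alpha^*$ for every $\alpha \in [0,1]$ is immediate from Theorem \ref{lambda star is surjective}: for any non-empty open $U \subseteq [0,1]$, the equality $\lambda^*(U \setminus \mathbb{Q}) = [0,1]$ forces $U$ to meet $L_\alpha^*$.

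To establish the $G_\delta$ property of $L_1^*$, I would first note that every rational in $[0,1]$ has a finite Pierce expansion, so $\sum_n (d_n(x))^{-s}$ reduces to a finite sum at any rational $x$; this gives $\lambda^*(x) = 0$ and hence $L_1^* \subseteq \mathbb{I} := [0,1] \setminus \mathbb{Q}$. I then introduce the digit map $\phi \colon \mathbb{I} \to \mathbb{N}^{\mathbb{N}}$, $\phi(x) = (d_n(x))_{n \in \mathbb{N}}$, with $\mathbb{N}^{\mathbb{N}}$ carrying the product (Baire-space) topology. Since each Pierce cylinder of depth $N$ is an interval with rational endpoints, an irrational $x$ always lies in the open interior of its cylinder, so $d_1, \dots, d_N$ are locally constant at $x$; hence $\phi$ is continuous on $\mathbb{I}$. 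In $\mathbb{N}^{\mathbb{N}}$, the set
\[
S := \bigcap_{s \in (0,1) \cap \mathbb{Q}} \biggl\{ \sigma \in \mathbb{N}^{\mathbb{N}} : \sum_{n \in \mathbb{N}} \sigma_n^{-s} = \infty \biggr\}
\]
is $G_\delta$ because each partial sum $\sigma \mapsto \sum_{n=1}^N \sigma_n^{-s}$ depends on finitely many coordinates and is therefore continuous, making $\bigl\{\sigma : \sum_n \sigma_n^{-s} = \infty\bigr\} = \bigcap_M \bigcup_N \bigl\{\sigma : \sum_{n=1}^N \sigma_n^{-s} > M\bigr\}$ a $G_\delta$ set, and a countable intersection of $G_\delta$ sets is $G_\delta$. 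Since $\lambda^*(x) = 1$ is equivalent (by monotonicity in $s$ and density of $\mathbb{Q}$ in $(0,1)$) to $\phi(x) \in S$, we obtain $L_1^* = \phi^{-1}(S)$, which is $G_\delta$ in $\mathbb{I}$; because $\mathbb{I}$ is itself $G_\delta$ in $[0,1]$, $L_1^*$ is $G_\delta$ in $[0,1]$.

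Combined with density, $L_1^*$ is dense $G_\delta$, hence comeager and nonmeager in $[0,1]$ by the Baire category theorem. For $\alpha \in [0,1)$, the disjointness of level sets gives $L_\alpha^* \subseteq [0,1] \setminus L_1^*$, which is meager, so $L_\alpha^*$ is meager. Finally, if such an $L_\alpha^*$ were $G_\delta$, being also dense it would be comeager, contradicting meagerness in the Baire space $[0,1]$; hence $L_\alpha^*$ is not $G_\delta$. The main technical point is the continuity of $\phi$ on the irrationals, which rests on the elementary but essential observation that Pierce cylinders are intervals with rational endpoints; everything else is routine Baire-category manipulation.
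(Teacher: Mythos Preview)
Your proof is correct and follows the same overall architecture as the paper: density from Theorem~\ref{lambda star is surjective}, $G_\delta$-ness of $L_1^*$ by pulling back a $G_\delta$ subset of a sequence space through a continuous digit map on $\mathbb{I}$, and then the standard Baire-category dichotomy for the remaining $L_\alpha^*$.

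The one genuine difference is in how you exhibit $L_1^*$ as the preimage of a $G_\delta$ set. The paper works in its space $\Sigma$ of Pierce sequences, invokes the P{\'o}lya--Szeg{\H o} formula $\lambda(\sigma)=\limsup_n \log n/\log\sigma_n$, and applies a general lemma (Lemma~\ref{superlevel set of limsup of lsc is G delta}) showing that the superlevel set of a $\limsup$ of lower semicontinuous functions is $G_\delta$; since $\lambda\le 1$ always, the superlevel set $\{\lambda\ge 1\}$ equals $L_1$, which is then transported to $[0,1]$ via the homeomorphism $\varphi|_{\Sigma_\infty}\colon\Sigma_\infty\to\mathbb{I}$. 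You instead stay with the original series definition, writing $L_1^*=\phi^{-1}\bigl(\bigcap_{s\in(0,1)\cap\mathbb{Q}}\{\sigma:\sum_n\sigma_n^{-s}=\infty\}\bigr)$ and observing each factor is $G_\delta$ because the partial sums are continuous; your continuity argument for $\phi$ on $\mathbb{I}$ via rationality of cylinder endpoints is exactly Proposition~\ref{I sigma} combined with Proposition~\ref{f is homeo}(\ref{f is homeo 3}). Your route is a bit more elementary (no P{\'o}lya--Szeg{\H o} formula, no abstract $\limsup$ lemma), while the paper's route packages the $G_\delta$ step into a reusable lemma that also feeds into other results in the paper. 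For the ``not $G_\delta$'' part, your argument (dense $G_\delta$ $\Rightarrow$ comeager, contradicting meagerness) is the same Baire-category fact the paper isolates as Lemma~\ref{at most one G delta level set}.
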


\begin{corollary} \label{bdim L alpha star corollary}
The $\alpha$-level set $L_\alpha^*$ has box-counting dimension $1$ for each $\alpha \in [0,1]$.
\end{corollary}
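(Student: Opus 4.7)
The plan is to combine the density of $L_\alpha^*$ in $[0,1]$, already established in Theorem \ref{L alpha star dense G delta theorem}, with the standard fact that both the lower and upper box-counting dimensions are invariant under passage to the closure in $\mathbb{R}$. Since box dimension is monotone, the upper bound $\bdim L_\alpha^* \leq 1$ is automatic from $L_\alpha^* \subseteq [0,1]$, so the real content is the matching lower bound.

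First I would recall the conclusion of Theorem \ref{L alpha star dense G delta theorem}: for every $\alpha \in [0,1]$ the level set $L_\alpha^*$ is dense in $[0,1]$. For $\alpha \in [0,1)$ this is the explicit density assertion, and for $\alpha = 1$ it follows from $L_1^*$ being dense $G_\delta$. In particular $\overline{L_\alpha^*} = [0,1]$ in all cases.

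Next I would invoke the standard closure property of the box-counting dimension: for any bounded $F \subseteq \mathbb{R}$ one has $\bdim F = \bdim \overline{F}$, since any cover of $F$ by closed intervals of diameter at most $\delta$ simultaneously covers $\overline{F}$, so the respective counting functions coincide (see, e.g., \cite[Chapter 3]{Fal14}). Applying this with $F = L_\alpha^*$ gives
\[
\bdim L_\alpha^* \;=\; \bdim \overline{L_\alpha^*} \;=\; \bdim [0,1] \;=\; 1,
\]
which is the desired equality.

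There is essentially no technical obstacle here; the corollary functions as a short packaging of Theorem \ref{L alpha star dense G delta theorem} together with the closure invariance of box dimension, and the entire argument should occupy only a couple of lines. It is worth flagging, however, the contrast with Theorem \ref{hdim L alpha star theorem}: the Hausdorff dimension of $L_\alpha^*$ can be as small as $0$ (at $\alpha = 1$), so the box-counting dimension is strictly larger than the Hausdorff dimension on these level sets, which is a familiar manifestation of the sensitivity of box dimension to density.
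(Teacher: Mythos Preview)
Your proof is correct and follows essentially the same approach as the paper: both combine the density of $L_\alpha^*$ from Theorem \ref{L alpha star dense G delta theorem} with the fact that dense subsets of $[0,1]$ have box-counting dimension $1$ (the paper records this as Proposition \ref{dense bdim proposition}, while you spell out the underlying closure-invariance argument directly). One trivial quibble: the closure property of box-counting dimension is in Chapter~2 of \cite{Fal14}, not Chapter~3.
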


\begin{theorem} \label{L alpha star is uncountable}
The $\alpha$-level set $L_\alpha^*$ has cardinality $\mathfrak{c}$, i.e., the cardinality of $\mathbb{R}$, for each $\alpha \in [0,1]$.
\end{theorem}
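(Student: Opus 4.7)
The plan is to handle the cases $\alpha \in [0,1)$ and $\alpha = 1$ separately, reducing each to a combination of results already obtained and the Perfect Set Theorem for Borel (equivalently, analytic) subsets of Polish spaces, which states that every uncountable such subset has cardinality $\mathfrak{c}$ (see, e.g., \cite{Kec95}).

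For $\alpha \in [0,1)$, I would invoke Theorem \ref{hdim L alpha star theorem} to get $\hdim L_\alpha^* = 1 - \alpha > 0$. Since every countable subset of $\mathbb{R}$ has Hausdorff dimension zero, $L_\alpha^*$ must be uncountable. Next, by Theorem \ref{lambda star second Baire}, $\lambda^*$ is of Baire class two, and hence preimages of closed singletons lie in the Borel class $F_{\sigma\delta}$; in particular, $L_\alpha^* = (\lambda^*)^{-1}(\{\alpha\})$ is a Borel subset of the Polish space $[0,1]$. Applying the Perfect Set Theorem for Borel sets then forces $|L_\alpha^*| = \mathfrak{c}$.

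For $\alpha = 1$, I would instead appeal to Theorem \ref{L alpha star dense G delta theorem}, which asserts that $L_1^*$ is dense $G_\delta$ in $[0,1]$. Since $[0,1]$ is a perfect Polish space, a standard consequence of the Baire Category Theorem is that any dense $G_\delta$ subset is uncountable (otherwise one could remove its points one at a time to obtain countably many dense $G_\delta$ sets with empty intersection, contradicting Baire). Being $G_\delta$, $L_1^*$ is in particular Borel, so the Perfect Set Theorem again yields $|L_1^*| = \mathfrak{c}$.

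I do not anticipate a substantive obstacle: the argument is essentially formal once the previously established theorems are in hand. The only point needing care is the verification that $L_\alpha^*$ is Borel (rather than merely analytic or of a higher projective class), which follows directly from the Baire-class-two property of $\lambda^*$. A self-contained alternative, avoiding descriptive-set-theoretic machinery entirely, would be to construct for each $\alpha$ an explicit injection of $\{0,1\}^{\mathbb{N}}$ into $L_\alpha^*$ by taking a base strictly increasing Pierce digit sequence of the appropriate growth rate (e.g., $d_n \asymp n^{1/\alpha}$ for $\alpha \in (0,1]$, and a sufficiently fast-growing sequence for $\alpha = 0$) and allowing binary perturbations at each position in a way that preserves both strict monotonicity and the convergence exponent.
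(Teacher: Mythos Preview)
Your proof is correct, but it takes a genuinely different route from the paper's. The paper proceeds constructively: it first proves (as Lemma~\ref{cardinality of L alpha lemma}) that $|L_\alpha \cap \Sigma_\infty| = \mathfrak{c}$ by exhibiting an explicit injection $\{0,1\}^{\mathbb{N}} \hookrightarrow L_\alpha \cap \Sigma_\infty$ of exactly the kind you describe in your final paragraph as an ``alternative,'' and then transfers this to $L_\alpha^* \cap \mathbb{I}$ via the homeomorphism $\varphi|_{\Sigma_\infty}$ (Lemma~\ref{L alpha is f image of L alpha star}). Your approach instead leverages the already-established Theorems~\ref{hdim L alpha star theorem}, \ref{L alpha star dense G delta theorem}, and \ref{lambda star second Baire} together with the Perfect Set Theorem for Borel subsets of Polish spaces. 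This is more economical in that it reuses the heavy machinery already in place (Hausdorff dimension, Baire class, $G_\delta$-ness) rather than building a fresh injection, and it uniformly covers both cases via descriptive set theory. The paper's approach, by contrast, is more elementary and self-contained: it does not appeal to the Perfect Set Theorem (only the weaker Proposition~\ref{cardinality of uncoutable Polish} for $G_\delta$ sets appears elsewhere), and the explicit injection makes no logical appeal to the dimension or Baire-class results, so Theorem~\ref{L alpha star is uncountable} becomes independent of those deeper theorems. In short, what you present as your fallback is precisely what the paper does.
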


Lastly, we shall examine the subset $\mathcal{D}^{(s)}_{\Div}$ of $[0,1]$ concerning the series of positive $s$th powers of the reciprocals of the digits, which is defined as
\begin{align} \label{definition of Theta}
\mathcal{D}^{(s)}_{\Div} \coloneqq \left\{ x \in [0,1] : \sum_{n \in \mathbb{N}} \frac{1}{(d_n(x))^{s}} \text{ diverges} \right\}
\end{align}
for each $s>0$. Note that $\mathcal{D}^{(s)}_{\Div}$ implicitly emerges from the definition \eqref{definition of lambda star} of the convergence exponent. In fact, we have
\begin{align} \label{equivalent definition of lambda star}
\lambda^*(x) = \inf \{ s > 0 : x \not \in \mathcal{D}^{(s)}_{\Div} \}
\end{align}
for each $x \in [0,1]$. Before stating the main results, we note that our results significantly generalize the theorems established by Shallit \cite{Sha86} described in Corollary \ref{Shallit's theorem} below, concerning some exceptional sets in Pierce expansions.

Now, we are in a position to list the main results on $\mathcal{D}^{(s)}_{\Div}$. Observe first that $\mathcal{D}^{(s)}_{\Div} = \varnothing$ if $s > 1$, by \eqref{equivalent definition of lambda star} and Theorem \ref{lambda star is surjective}.

\begin{theorem} \label{Theta Lebesgue measure theorem}
The set $\mathcal{D}^{(s)}_{\Div}$ has null Lebesgue measure for each $s \in (0,1]$.
\end{theorem}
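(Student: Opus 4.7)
The plan is to obtain Theorem \ref{Theta Lebesgue measure theorem} as an essentially formal consequence of Theorem \ref{lambda star is zero a.e.} via the identity \eqref{equivalent definition of lambda star}. The key step is the set-theoretic inclusion
\[
\mathcal{D}^{(s)}_{\Div} \subseteq \{ x \in [0,1] : \lambda^*(x) \geq s \}
\]
for every $s \in (0,1]$. To establish this, fix $x \in \mathcal{D}^{(s)}_{\Div}$. Every Pierce digit satisfies $d_n(x) \geq 1$, so for each $n$ the map $s' \mapsto 1/(d_n(x))^{s'}$ is non-increasing on $(0,\infty)$; consequently, the set
\[
\mathcal{S}(x) \coloneqq \left\{ s' > 0 : \sum_{n \in \mathbb{N}} \frac{1}{(d_n(x))^{s'}} \text{ converges} \right\}
\]
is upward-closed in $(0,\infty)$. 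Since the series diverges at $s$, we have $s \notin \mathcal{S}(x)$, and upward-closure forces $s \leq \inf \mathcal{S}(x) = \lambda^*(x)$, which gives the stated inclusion.

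Once this inclusion is in hand, I would observe that, for $s > 0$, the right-hand side is contained in $\{ x \in [0,1] : \lambda^*(x) > 0 \}$, and the latter has Lebesgue measure zero directly by Theorem \ref{lambda star is zero a.e.}. Hence $\mathcal{D}^{(s)}_{\Div}$ is Lebesgue null, which is the content of the theorem.

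Because the entire argument reduces to a short deduction from Theorem \ref{lambda star is zero a.e.}, there is essentially no independent obstacle in the present statement---the real work lives upstream, in the almost-everywhere vanishing of $\lambda^*$. I note, incidentally, that the two results are effectively equivalent: given Theorem \ref{Theta Lebesgue measure theorem}, a countable intersection over $s \in \mathbb{Q} \cap (0,1]$ recovers $\lambda^*(x) = 0$ for Lebesgue-almost every $x$, so any alternative route to the present theorem that bypasses Theorem \ref{lambda star is zero a.e.} would have to proceed by a direct metric analysis of the growth rate of the digits $d_n(x)$, which would itself yield the a.e.\ vanishing of $\lambda^*$ as a byproduct.
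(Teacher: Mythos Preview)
Your proposal is correct and follows essentially the same route as the paper: the paper's proof also reduces to the inclusion $\mathcal{D}^{(s)}_{\Div} \subseteq \{ x : \lambda^*(x) \geq s \} = \bigcup_{\alpha \in [s,1]} L_\alpha^* \subseteq [0,1] \setminus L_0^*$ (packaged there as part of Lemma \ref{relation of Theta and L alpha star}) and then invokes Theorem \ref{lambda star is zero a.e.}. Your direct verification of the inclusion via upward-closure of the convergence set is exactly the content of the second inclusion in that lemma.
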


Due to the preceding theorem, the set $\mathcal{D}^{(s)}_{\Div}$, for each $s \in (0,1]$, is an exceptional set (in the Lebesgue measure sense) arising in Pierce expansions. We determine the dimensions and topological sizes of these exceptional sets, just as we did for the level sets $L_\alpha^*$ above.

\begin{theorem} \label{Theta hdim theorem}
The set $\mathcal{D}^{(s)}_{\Div}$ has Hausdorff dimension $1-s$ for each $s \in (0,1]$.
\end{theorem}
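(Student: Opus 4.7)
The plan is to sandwich $\mathcal{D}^{(s)}_{\Div}$ between a level set and a super-level set of the convergence exponent $\lambda^*$, and to invoke Theorem \ref{hdim L alpha star theorem} for both bounds.

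For the lower bound, I start from the elementary observation that if $\alpha \in (s,1]$ and $x \in L_\alpha^*$, then $\lambda^*(x) = \alpha > s$ forces $\sum_n d_n(x)^{-s}$ to diverge, so $L_\alpha^* \subseteq \mathcal{D}^{(s)}_{\Div}$. Monotonicity of Hausdorff dimension under inclusion, combined with $\hdim L_\alpha^* = 1 - \alpha$ from Theorem \ref{hdim L alpha star theorem}, gives $\hdim \mathcal{D}^{(s)}_{\Div} \geq 1 - \alpha$ for every $\alpha \in (s,1]$. Letting $\alpha \to s^+$ yields $\hdim \mathcal{D}^{(s)}_{\Div} \geq 1 - s$ when $s \in (0,1)$; for $s = 1$ the bound $\hdim \geq 0$ is automatic once a single point of $\mathcal{D}^{(1)}_{\Div}$ is exhibited.

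For the upper bound, since $d_n(x) \geq 1$, divergence of $\sum_n d_n(x)^{-s}$ forces divergence of $\sum_n d_n(x)^{-t}$ for every $t \leq s$, and hence $\lambda^*(x) \geq s$. Therefore $\mathcal{D}^{(s)}_{\Div} \subseteq \{x \in [0,1] : \lambda^*(x) \geq s\}$, and the problem reduces to showing that this super-level set has Hausdorff dimension at most $1 - s$. This is the dimensional estimate that underlies the upper-bound half of Theorem \ref{hdim L alpha star theorem}: one covers the super-level set by Pierce cylinders $\Delta_n(a_1, \ldots, a_n)$ whose last digit is forced (via the classical P\'olya--Szeg\H{o} identity $\lambda^*(x) = \limsup_n (\log n)/\log d_n(x)$) to satisfy $a_n < n^{1/(s-\epsilon)}$ for infinitely many $n$, and then sums $|\Delta_n(a_1,\ldots,a_n)|^{1-s+\delta}$ over admissible cylinders using the standard Pierce cylinder length bound and the strict monotonicity $a_1 < a_2 < \cdots < a_n$ of Pierce digits.

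The main technical obstacle is this covering estimate: the combinatorial sum over strictly increasing tuples $a_1 < \cdots < a_n$ with $a_n < n^{1/(s-\epsilon)}$ must be shown to be summable in $n$ after raising cylinder lengths to the power $1 - s + \delta$, so that the Hausdorff $(1 - s + \delta)$-premeasure of the $\limsup$ set vanishes and $\delta \to 0$ closes the argument. However, since this estimate is exactly what the proof of Theorem \ref{hdim L alpha star theorem} supplies, the present theorem follows formally from the two inclusions $L_\alpha^* \subseteq \mathcal{D}^{(s)}_{\Div} \subseteq \{x : \lambda^*(x) \geq s\}$ together with the dimensional statement for the level sets, and no additional combinatorial work is required in the proof of Theorem \ref{Theta hdim theorem} itself.
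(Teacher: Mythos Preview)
Your approach is essentially the paper's: the paper sandwiches $\mathcal{D}^{(s)}_{\Div}$ via $L_t^* \subseteq \mathcal{D}^{(s)}_{\Div} \subseteq \bigcup_{\alpha \in [s,1]} L_\alpha^*$ (Lemma \ref{relation of Theta and L alpha star}), uses $\hdim L_t^* = 1-t$ with $t \to s^+$ for the lower bound, and invokes the formula $\hdim \bigcup_{\alpha \in [s,1]} L_\alpha^* = 1-s$ from Lemma \ref{hdim formula lemma} for the upper bound.

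One caution about your informal sketch of the covering argument: using only the constraint $a_n < n^{1/(s-\epsilon)}$ on the last digit, together with the crude cylinder bound $|I_\sigma| \leq 1/n!$, yields only $\hdim \{\lambda^* \geq s\} \leq s^{-1}-1$, which is strictly weaker than $1-s$ when $s<1$. The sharp bound requires the extra partition step in Lemma \ref{hdim formula lemma}: one subdivides $[s,1]$ into short subintervals $[\alpha_j,\alpha_{j+1}]$ and applies the two-sided estimate of Lemma \ref{hidm upper bound lemma} (which also uses the lower bound $a_j \geq j^{1/(\beta+\varepsilon)}$ on \emph{all} digits up to level $k$) on each piece, obtaining $\alpha_{j+1}(\alpha_j^{-1}-1) \to 1-s$ as the mesh tends to zero. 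Since you ultimately defer to the proof of Theorem \ref{hdim L alpha star theorem} rather than to your own sketch, this is not a gap in your argument, but the sketch itself does not produce the claimed exponent.
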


\begin{theorem} \label{Theta dense G delta theorem}
The set $\mathcal{D}^{(s)}_{\Div}$ is dense $G_\delta$, hence comeager and nonmeager, in $[0,1]$, and has cardinality $\mathfrak{c}$, for each $s \in (0,1]$.
\end{theorem}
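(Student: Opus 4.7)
The plan is to establish the three assertions in turn: the $G_\delta$ structure, density, and cardinality $\mathfrak{c}$. As a preliminary step, I would record that every rational in $[0,1]$ has a terminating Pierce expansion, so that the series in \eqref{definition of Theta} reduces to a finite sum and converges; hence $\mathcal{D}^{(s)}_{\Div} \subseteq [0,1] \setminus \mathbb{Q}$ for every $s>0$, which will be used repeatedly.

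For the $G_\delta$ structure, since the partial sums $f_N(x) \coloneqq \sum_{n=1}^{N} (d_n(x))^{-s}$ are non-decreasing in $N$, one has
\[
\mathcal{D}^{(s)}_{\Div} = \bigcap_{M \in \mathbb{N}} V_M, \qquad V_M \coloneqq \bigcup_{N \in \mathbb{N}} \bigl\{ x \in [0,1] : f_N(x) > M \bigr\}.
\]
Each inner set is a union of Pierce cylinders $C(a_1, \ldots, a_N) \coloneqq \{x : d_n(x) = a_n \text{ for } 1 \leq n \leq N\}$ taken over strictly increasing admissible tuples with $\sum_{n=1}^N a_n^{-s} > M$. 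A short induction from \eqref{Pierce algorithm 1}--\eqref{Pierce algorithm 2} shows that each cylinder is an interval in $[0,1]$ whose two endpoints are rationals (corresponding to terminating Pierce expansions); in particular, on the irrationals, membership in a cylinder coincides with membership in its interior. Replacing each cylinder above by its interior produces an open set $W_M$ differing from $V_M$ only on a countable set of rationals, and since $\mathcal{D}^{(s)}_{\Div}$ avoids $\mathbb{Q}$ we obtain
\[
\mathcal{D}^{(s)}_{\Div} = ([0,1] \setminus \mathbb{Q}) \cap \bigcap_{M \in \mathbb{N}} W_M,
\]
an intersection of two $G_\delta$ subsets of $[0,1]$, hence $G_\delta$.

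For density, let $U \subseteq [0,1]$ be non-empty and open. Pick any irrational $x_0 \in U$; because $\diam C(d_1(x_0), \ldots, d_k(x_0)) \to 0$ as $k \to \infty$, I can fix $k$ with $C(a_1, \ldots, a_k) \subseteq U$, writing $a_n \coloneqq d_n(x_0)$. Extend this prefix by setting $a_{k+j} \coloneqq a_k + j$ for $j \geq 1$; the full sequence $(a_n)_{n \in \mathbb{N}}$ is strictly increasing, hence admissible, and the point $x$ assigned to it by \eqref{Pierce expansion} satisfies $d_n(x) = a_n$ for all $n$ and lies in $C(a_1, \ldots, a_k) \subseteq U$. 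Since $a_n \sim n$ as $n \to \infty$, the series $\sum_n a_n^{-s}$ diverges for every $s \in (0,1]$, so $x \in \mathcal{D}^{(s)}_{\Div} \cap U$.

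Comeagerness and non-meagerness in $[0,1]$ are then automatic from dense $G_\delta$ via the Baire category theorem. For cardinality $\mathfrak{c}$ one can either invoke the general fact that a dense $G_\delta$ subset of a perfect Polish space has cardinality $\mathfrak{c}$, or elaborate on the density construction by replacing the deterministic tail with a binary branching one, e.g.\ $a_{k+j+1} \in \{a_{k+j}+1, a_{k+j}+2\}$ for each $j \geq 0$: this still yields $a_n \sim n$, so $\sum_n a_n^{-s} = \infty$, while producing $\mathfrak{c}$ distinct admissible sequences and therefore $\mathfrak{c}$ distinct points of $\mathcal{D}^{(s)}_{\Div}$ inside any prescribed cylinder. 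The one step demanding genuine attention is the elementary but slightly fiddly verification that Pierce cylinders are intervals with rational endpoints, since it is exactly this that makes the passage from the half-open union $V_M$ to the open set $W_M$ lossless on the irrationals; everything else is a direct assembly of the resulting ingredients.
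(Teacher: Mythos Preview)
Your argument is correct and complete, but it proceeds along a different route from the paper's. The paper never works directly with partial sums on $[0,1]$; instead it transfers everything through the symbolic space $\Sigma$. Concretely, it shows (Lemma~\ref{Psi alpha lower semicontinuity lemma}) that the map $\Psi^{(s)}(\sigma)=\sum_k \sigma_k^{-s}$ is lower semicontinuous on $\Sigma$, deduces (Lemma~\ref{density lemma 1}) that the $\infty$-level set $L_\infty(\Psi^{(s)})$ is dense $G_\delta$ in $\Sigma_\infty$, and then pushes this forward through the homeomorphism $\varphi|_{\Sigma_\infty}\colon \Sigma_\infty\to\mathbb{I}$ of Proposition~\ref{f is homeo}(\ref{f is homeo 3}) to obtain that $\mathcal{D}^{(s)}_{\Div}$ is dense $G_\delta$ in $\mathbb{I}$, hence in $[0,1]$. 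Cardinality $\mathfrak{c}$ is then read off from the Cantor--Bendixson consequence (Proposition~\ref{cardinality of uncoutable Polish}).

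Your approach bypasses the sequence space entirely: you write $\mathcal{D}^{(s)}_{\Div}$ as a countable intersection of cylinder-unions on $[0,1]$ itself, then use the interval structure of Pierce cylinders (this is exactly the content of Proposition~\ref{I sigma}, so the ``slightly fiddly verification'' you flag is already in the paper) to replace each union by a genuinely open set without losing any irrationals. Density and cardinality are handled by an explicit tail construction rather than by abstract transfer. The trade-off is that the paper's route reuses machinery already built for other theorems and keeps the semicontinuity argument cleanly in one place, while your route is more self-contained and avoids introducing $\Sigma$ altogether for this particular result. Either is fine; your identification of the cylinder-interval lemma as the only nontrivial step is accurate.
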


\begin{corollary} \label{bdim Theta corollary}
The set $\mathcal{D}^{(s)}_{\Div}$ has box-counting dimension $1$ for each $s \in (0,1]$.
\end{corollary}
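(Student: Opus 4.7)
The plan is to deduce this corollary directly from the density assertion in Theorem \ref{Theta dense G delta theorem} together with the standard invariance of the box-counting dimension under closure. Concretely, for any bounded $A \subseteq \mathbb{R}$, if $N_\delta(A)$ denotes the least number of closed sets of diameter at most $\delta$ needed to cover $A$, then $N_\delta(A) = N_\delta(\overline{A})$, since any finite cover of $A$ by closed sets automatically covers $\overline{A}$, and conversely any cover of $\overline{A}$ covers $A$. Consequently, both the lower and upper box-counting dimensions of $A$ coincide with those of $\overline{A}$; in particular $\bdim A = \bdim \overline{A}$ whenever either side is well defined (see \cite[Chapter 3]{Fal14}).

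By Theorem \ref{Theta dense G delta theorem}, the set $\mathcal{D}^{(s)}_{\Div}$ is dense in $[0,1]$ for every $s \in (0,1]$, and hence $\overline{\mathcal{D}^{(s)}_{\Div}} = [0,1]$. Applying the closure-invariance just recalled then yields
\[
\bdim \mathcal{D}^{(s)}_{\Div} = \bdim \overline{\mathcal{D}^{(s)}_{\Div}} = \bdim [0,1] = 1,
\]
as desired. There is no real obstacle in this argument: the substantive content was already carried out in proving density in Theorem \ref{Theta dense G delta theorem}, and the present corollary follows in a single line once that ingredient is in hand. It is also worth noting that, in contrast to the Hausdorff dimension computation of Theorem \ref{Theta hdim theorem}, the box-counting dimension is insensitive to the restriction $s \in (0,1]$ in the sense that it takes the maximal possible value $1$ throughout this range; this discrepancy is precisely the familiar failure of $\bdim$ to respect countable unions and, equivalently, its invariance under closure.
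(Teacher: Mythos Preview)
Your proof is correct and follows essentially the same approach as the paper: the paper invokes Theorem~\ref{Theta dense G delta theorem} for density and then applies Proposition~\ref{dense bdim proposition} (any dense subset of $[0,1]$ has box-counting dimension~$1$), which is precisely the closure-invariance fact you spell out explicitly. One minor point: the closure-invariance of $\bdim$ appears in \cite[Chapter~2]{Fal14}, not Chapter~3.
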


Now, put
\begin{align} \label{definition of E and F}
\begin{aligned}
\mathcal{D}_{\Div} &\coloneqq \left\{ x \in [0,1] : \sum_{n \in \mathbb{N}} \frac{1}{d_n(x)} \text{ diverges} \right\}, \\
\mathcal{T}_{\Div} &\coloneqq \left\{ x \in [0,1] : \sum_{n \in \mathbb{N}} T^n(x) \text{ diverges} \right\},
\end{aligned}
\end{align} 
both of which were first studied in \cite{Sha86}. It should be pointed out that the sets $\mathcal{D}_{\Div}$ and $\mathcal{T}_{\Div}$ are, in fact, equal (see Lemma \ref{E and F are equal} below), although in \cite{Sha86}, this fact was not explicitly stated and the two sets were treated separately.

\begin{corollary} \label{sum of reciprocals hdim corollary}
The sets $\mathcal{D}_{\Div}$ and $\mathcal{T}_{\Div}$ are of null Hausdorff dimension.
\end{corollary}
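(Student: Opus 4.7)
The plan is to reduce the corollary to Theorem~\ref{Theta hdim theorem} applied at the endpoint $s=1$. Comparing \eqref{definition of Theta} with \eqref{definition of E and F}, one sees directly from the definitions that $\mathcal{D}_{\Div} = \mathcal{D}^{(1)}_{\Div}$. Since Theorem~\ref{Theta hdim theorem} is stated for every $s \in (0,1]$, specializing to $s=1$ yields
\[
\hdim \mathcal{D}_{\Div} = \hdim \mathcal{D}^{(1)}_{\Div} = 1 - 1 = 0,
\]
which is the first half of the claim.

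To handle $\mathcal{T}_{\Div}$, I would invoke Lemma~\ref{E and F are equal}, which asserts $\mathcal{D}_{\Div} = \mathcal{T}_{\Div}$; consequently $\hdim \mathcal{T}_{\Div} = \hdim \mathcal{D}_{\Div} = 0$. The reason the two sets coincide is the two-sided comparison
\[
\frac{1}{d_{n+1}(x) + 1} < T^n(x) \leq \frac{1}{d_{n+1}(x)},
\]
valid for irrational $x \in [0,1]$ as an immediate consequence of \eqref{Pierce algorithm 1}--\eqref{Pierce algorithm 2}; this forces $\sum_{n} T^n(x)$ and $\sum_{n} 1/d_n(x)$ to converge or diverge together. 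The rational case, where the Pierce algorithm terminates and both sums are essentially finite, is trivial and places $x$ outside both sets.

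There is no real obstacle here: the substantive content has already been packaged into Theorem~\ref{Theta hdim theorem} and Lemma~\ref{E and F are equal}, and this corollary is a one-line specialization plus an identification of sets. Accordingly, I would keep the write-up minimal, simply citing both results and noting that the endpoint $s=1$ is included in Theorem~\ref{Theta hdim theorem}.
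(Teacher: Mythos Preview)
Your proposal is correct and matches the paper's proof essentially line for line: the paper also identifies $\mathcal{D}_{\Div} = \mathcal{D}^{(1)}_{\Div}$ (this is \eqref{E equals Theta one}), invokes Theorem~\ref{Theta hdim theorem} at $s=1$, and handles $\mathcal{T}_{\Div}$ via Lemma~\ref{E and F are equal}. Your added justification of the lemma via the two-sided comparison is accurate and mirrors the paper's own proof of that lemma.
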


\begin{corollary} \label{sum of reciprocals dense G delta corollary}
The sets $\mathcal{D}_{\Div}$ and $\mathcal{T}_{\Div}$ are dense $G_\delta$, hence comeager and nonmeager, in $[0,1]$.
\end{corollary}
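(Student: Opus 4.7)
The plan is to recognize this corollary as essentially an immediate specialization of the preceding general results, so almost no new work is required. First I would observe that, by comparing the definitions, the set $\mathcal{D}_{\Div}$ from \eqref{definition of E and F} coincides with the set $\mathcal{D}^{(s)}_{\Div}$ from \eqref{definition of Theta} in the special case $s = 1$; that is, $\mathcal{D}_{\Div} = \mathcal{D}^{(1)}_{\Div}$.

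Given this identification, I would simply invoke Theorem \ref{Theta dense G delta theorem} with the parameter choice $s = 1 \in (0,1]$ to conclude that $\mathcal{D}_{\Div}$ is dense $G_\delta$ in $[0,1]$. From this it follows, by standard facts about Baire category in complete metric spaces (the complement of a dense $G_\delta$ is a meager $F_\sigma$, and $[0,1]$ is a Baire space, so every dense $G_\delta$ is comeager and nonmeager), that $\mathcal{D}_{\Div}$ is both comeager and nonmeager in $[0,1]$.

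For the second set, I would appeal to Lemma \ref{E and F are equal}, which asserts that $\mathcal{D}_{\Div} = \mathcal{T}_{\Div}$ as subsets of $[0,1]$. Once the two sets are literally equal, the conclusion for $\mathcal{T}_{\Div}$ is transferred from $\mathcal{D}_{\Div}$ at no cost. There is no real obstacle here: the proposition is phrased as a corollary precisely because all the analytical content (both the comparison between $\sum 1/d_n(x)$ and $\sum T^n(x)$, and the Baire-categorical analysis of $\mathcal{D}^{(s)}_{\Div}$) has already been handled in earlier statements, so the task of this proof reduces to correctly citing the two prior results and noting the definitional match $\mathcal{D}_{\Div} = \mathcal{D}^{(1)}_{\Div}$.
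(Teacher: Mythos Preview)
Your proposal is correct and follows essentially the same route as the paper: identify $\mathcal{D}_{\Div} = \mathcal{D}^{(1)}_{\Div}$, apply Theorem~\ref{Theta dense G delta theorem} at $s=1$, invoke the standard Baire category fact (the paper cites Fact~\ref{basic facts}(\ref{basic facts 1})), and transfer to $\mathcal{T}_{\Div}$ via Lemma~\ref{E and F are equal}.
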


\begin{corollary} [{\cite[Theorems 12 and 13]{Sha86}}] \label{Shallit's theorem}
The sets $\mathcal{D}_{\Div}$ and $\mathcal{T}_{\Div}$ have null Lebesgue measure, cardinality $\mathfrak{c}$, and are dense in $[0,1]$.
\end{corollary}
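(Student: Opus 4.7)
The plan is to deduce Corollary \ref{Shallit's theorem} as an immediate specialization of the main results proved earlier in the paper, using essentially no new ideas. First I would observe that, straight from the definitions \eqref{definition of Theta} and \eqref{definition of E and F}, the set $\mathcal{D}_{\Div}$ is nothing other than $\mathcal{D}^{(1)}_{\Div}$, the $s=1$ case of the family already studied. Consequently, specializing Theorem \ref{Theta Lebesgue measure theorem} at $s=1$ yields that $\mathcal{D}_{\Div}$ has null Lebesgue measure, while specializing Theorem \ref{Theta dense G delta theorem} at $s=1$ yields that $\mathcal{D}_{\Div}$ is dense in $[0,1]$ (in fact dense $G_\delta$) and has cardinality $\mathfrak{c}$. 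This takes care of every claim made about $\mathcal{D}_{\Div}$.

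To transfer the conclusions to $\mathcal{T}_{\Div}$, I would invoke Lemma \ref{E and F are equal}, which asserts the set-theoretic equality $\mathcal{D}_{\Div} = \mathcal{T}_{\Div}$. Since null Lebesgue measure, density in $[0,1]$, and cardinality are all invariants of a set, the corresponding statements for $\mathcal{T}_{\Div}$ follow at once. One could alternatively note that this corollary is already subsumed by Corollaries \ref{sum of reciprocals hdim corollary} and \ref{sum of reciprocals dense G delta corollary} together with the fact that Hausdorff dimension zero implies null Lebesgue measure, but the most economical route is the direct specialization above.

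There is no real obstacle here: the corollary is stated explicitly as a comparison with Shallit's original results \cite[Theorems 12 and 13]{Sha86}, and its role is simply to record that the strengthened theorems of the present paper (dimension, Baire category, $G_\delta$-structure, cardinality) recover and refine Shallit's classical statement. The only content of the proof is to point out the identification $\mathcal{D}_{\Div} = \mathcal{D}^{(1)}_{\Div}$ and to cite Lemma \ref{E and F are equal}.
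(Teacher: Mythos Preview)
Your proposal is correct and takes essentially the same approach as the paper: identify $\mathcal{D}_{\Div}=\mathcal{D}^{(1)}_{\Div}$ via \eqref{E equals Theta one}, invoke Theorems \ref{Theta Lebesgue measure theorem} and \ref{Theta dense G delta theorem} at $s=1$, and transfer to $\mathcal{T}_{\Div}$ via Lemma \ref{E and F are equal}. The only cosmetic difference is that the paper routes the density claim through Corollary \ref{sum of reciprocals dense G delta corollary} (which itself just specializes Theorem \ref{Theta dense G delta theorem}), whereas you cite Theorem \ref{Theta dense G delta theorem} directly.
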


This paper is organized as follows. In Section \ref{Preliminaries}, we present preliminary facts about Pierce expansions, Hausdorff and box-counting dimensions, the Baire classification of functions, the Baire category of sets, and the cardinality of sets. In Section \ref{Auxiliary results}, we establish some auxiliary results that will be used in the proofs of the main results. In Section \ref{Proofs of main results}, we prove the aforementioned main results of this paper.

Throughout the paper, the closed unit interval $[0,1]$ is equipped with the subspace topology induced by the standard topology on $\mathbb{R}$. For a subset $F$ of $[0,1]$, we denote by $\diam F$ the diameter, $\hdim F$ the Hausdorff dimension, $\bdim F$ the box-counting dimension (if exists), and $\mathcal{H}^s(F)$ the $s$-dimensional Hausdorff measure. We denote by $\mathbb{N}$ the set of positive integers, $\mathbb{N}_0$ the set of non-negative integers, and $\mathbb{N}_\infty \coloneqq \mathbb{N} \cup \{ \infty \}$ the set of extended positive integers. The set of irrationals in $[0,1]$ will be denoted by $\mathbb{I}$. Following the convention, we define $c + \infty \coloneqq \infty$ and $c/\infty \coloneqq 0$ for any $c \in \mathbb{R}$, and $\infty^s \coloneqq \infty$ for any $s>0$. For any set $A$, we denote by $|A|$ its cardinality.

\section{Preliminaries} \label{Preliminaries}

In this section, we present preliminary facts about Pierce expansions, Hausdorff and box-counting dimensions, the Baire classification of functions, the Baire category of sets, and the cardinality of sets.

Since the convergence exponent was originally defined (see \cite[pp.~25--26]{PS98}) and extensively studied (see, e.g., \cite{Kos79, KS82, Sal84}) as a function on a certain sequence space, to investigate $\lambda^* \colon [0,1] \to [0, \infty]$, it is natural to consider the convergence exponent defined on a ``nice'' sequence space, which is inherently, in some sense, related to the Pierce expansion digit sequences. In \cite{Ahn23a}, where the error-sum function of Pierce expansions was discussed, we introduced the space of {\em Pierce sequences} $\Sigma$ in $\mathbb{N}_\infty^{\mathbb{N}}$ which is defined as
\[
\Sigma \coloneqq \Sigma_0 \cup \bigcup_{n \in \mathbb{N}} \Sigma_n \cup \Sigma_\infty,
\]
where
\begin{align*}
\Sigma_0 &\coloneqq \{ (\sigma_k)_{k \in \mathbb{N}} \in \{ \infty \}^{\mathbb{N}} \} = \{ (\infty, \infty, \dotsc) \}, \\
\Sigma_n &\coloneqq \{ (\sigma_k)_{k \in \mathbb{N}} \in \mathbb{N}^{\{ 1, \dotsc, n \}} \times \{ \infty \}^{\mathbb{N} \setminus \{ 1, \dotsc, n \}} : \sigma_1 < \sigma_2 < \dotsb < \sigma_n \}, \quad n \in \mathbb{N}, \\
\Sigma_\infty &\coloneqq \{ (\sigma_k)_{k \in \mathbb{N}} \in \mathbb{N}^{\mathbb{N}} : \sigma_k < \sigma_{k+1} \text{ for all } k \in \mathbb{N} \}.
\end{align*}

We equip $\mathbb{N}$ with the discrete topology and consider $\mathbb{N}_\infty$ as its one-point compactification. Then, the product space $\mathbb{N}_\infty^{\mathbb{N}}$ is compact by the Tychonoff theorem. Moreover, since $\mathbb{N}_\infty^{\mathbb{N}}$ is metrizable (\cite[Lemmas 3.1 and 3.3]{Ahn23a}), we may fix a compatible metric $\rho$ which induces the product topology on $\mathbb{N}_\infty^{\mathbb{N}}$. Henceforth, we write $\mathbb{N}_\infty^{\mathbb{N}}$ for both the product space and the metric space $(\mathbb{N}_\infty^{\mathbb{N}}, \rho)$. Similarly, we write $\Sigma$ for the subspace of the product space $\mathbb{N}_\infty^{\mathbb{N}}$ and also for the metric space $(\Sigma, \rho)$.

\begin{proposition} [{\cite[Lemmas 3.1--3.4]{Ahn23a}}] \label{Sigma is compact metric}
The space of Pierce sequences $\Sigma$ is a closed, hence compact, metric space as a subspace of the compact metric space $\mathbb{N}_\infty^{\mathbb{N}}$. Consequently, $\Sigma$ is a complete metric space.
\end{proposition}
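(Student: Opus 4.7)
The plan is to reduce the proposition to showing that $\Sigma$ is a closed subset of $\mathbb{N}_\infty^{\mathbb{N}}$. Once this is done, every remaining claim follows from standard facts: a closed subset of a compact space is compact; a compact metrizable space is complete; and a subspace of a metric space inherits its metric. Since the excerpt already takes for granted that $\mathbb{N}_\infty^{\mathbb{N}}$ is a compact metric space, only the closedness of $\Sigma$ requires verification.

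To establish closedness, I will use sequential characterization. Suppose $\sigma^{(k)} = (\sigma^{(k)}_j)_{j \in \mathbb{N}}$ is a sequence in $\Sigma$ converging in $\mathbb{N}_\infty^{\mathbb{N}}$ to some $\sigma = (\sigma_j)_{j \in \mathbb{N}}$. Since the metric $\rho$ induces the product topology, convergence is coordinatewise: $\sigma_j^{(k)} \to \sigma_j$ in $\mathbb{N}_\infty$ for each $j$. I would then recast membership in $\Sigma$ in the following local form: a sequence $\tau \in \mathbb{N}_\infty^{\mathbb{N}}$ lies in $\Sigma$ if and only if, for every $j \in \mathbb{N}$, (a) whenever $\tau_j = \infty$ one has $\tau_{j+1} = \infty$, and (b) whenever $\tau_j, \tau_{j+1} \in \mathbb{N}$ one has $\tau_j < \tau_{j+1}$. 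It is routine to check that this pair of local conditions is equivalent to the partition-based definition of $\Sigma$ as $\Sigma_0 \cup \bigcup_n \Sigma_n \cup \Sigma_\infty$.

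The key step is verifying (a) and (b) for the limit $\sigma$. For (b), if $\sigma_j, \sigma_{j+1} \in \mathbb{N}$, then, since $\mathbb{N}$ is open and discrete in $\mathbb{N}_\infty$, coordinatewise convergence forces $\sigma^{(k)}_j = \sigma_j$ and $\sigma^{(k)}_{j+1} = \sigma_{j+1}$ for all sufficiently large $k$; the strict inequality $\sigma^{(k)}_j < \sigma^{(k)}_{j+1}$, valid because $\sigma^{(k)} \in \Sigma$, passes to the limit. For (a), suppose toward a contradiction that $\sigma_j = \infty$ while $\sigma_{j+1} \in \mathbb{N}$. Then eventually $\sigma^{(k)}_{j+1} = \sigma_{j+1}$; because $\sigma^{(k)} \in \Sigma$ and the finite entries of any Pierce sequence form an initial strictly increasing block, $\sigma^{(k)}_j$ must be a positive integer strictly less than $\sigma_{j+1}$, hence $\sigma^{(k)}_j$ eventually lies in the finite set $\{1, \dotsc, \sigma_{j+1} - 1\}$. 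But then $\sigma^{(k)}_j$ cannot converge to $\infty$ in $\mathbb{N}_\infty$, contradicting $\sigma_j = \infty$.

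Having closed $\Sigma$ inside the compact metric space $\mathbb{N}_\infty^{\mathbb{N}}$, compactness of $\Sigma$ is immediate, and completeness of the induced metric $\rho$ on $\Sigma$ follows from the general fact that every compact metric space is complete. I do not anticipate a genuine obstacle; the only subtlety is being careful about the topology on $\mathbb{N}_\infty$ as a one-point compactification of the discrete space $\mathbb{N}$, which is exactly what makes the stabilization arguments in (a) and (b) work.
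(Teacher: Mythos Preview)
Your argument is correct. The paper itself does not supply a proof of this proposition; it simply cites \cite[Lemmas 3.1--3.4]{Ahn23a} and moves on. So there is no in-paper proof to compare against, but your sequential-closure argument is exactly the standard route: reformulate membership in $\Sigma$ as a conjunction of two-coordinate conditions, then use that finite points of $\mathbb{N}_\infty$ are isolated to pass each condition to the limit. The only place one could stumble is condition~(a), and you handle it cleanly by observing that a finite $(j{+}1)$st coordinate in a Pierce sequence forces the $j$th coordinate into a bounded finite set, blocking convergence to $\infty$. Nothing is missing.
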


The following proposition is not explicitly used in the subsequent discussions, but it is worth noting.

\begin{proposition} \label{Sigma infty is dense G delta proposition}
The subspace $\Sigma_\infty$ is dense $G_\delta$ in $\Sigma$.
\end{proposition}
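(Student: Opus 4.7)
The plan is to establish the two claims separately, both by exploiting the description of the product topology on $\mathbb{N}_\infty^{\mathbb{N}}$ arising from viewing $\mathbb{N}_\infty$ as the one-point compactification of the discrete space $\mathbb{N}$. In particular, a basic open neighborhood of a point $\sigma$ in this topology restricts only finitely many coordinates, requiring $\tau_k = \sigma_k$ at the (finitely many prescribed) coordinates where $\sigma_k$ is finite, and $\tau_k \in \{\infty\} \cup \{m \in \mathbb{N} : m \geq M\}$ at the (finitely many prescribed) coordinates where $\sigma_k = \infty$.

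For the $G_\delta$ claim, I would write
\[
\Sigma_\infty = \bigcap_{n \in \mathbb{N}} V_n, \qquad V_n := \{\sigma \in \Sigma : \sigma_n \in \mathbb{N}\}.
\]
Since $\{\infty\}$ is closed in the Hausdorff space $\mathbb{N}_\infty$, the set $\mathbb{N}$ is open there; pulling back along the $n$th coordinate projection shows that each $V_n$ is open in $\mathbb{N}_\infty^{\mathbb{N}}$, hence open in $\Sigma$. This exhibits $\Sigma_\infty$ as a countable intersection of open sets in $\Sigma$.

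For density, fix $\sigma \in \Sigma$ and a basic open neighborhood $V$ of $\sigma$ of the form above, with restricted coordinates contained in $\{1, \dotsc, N\}$ and a common lower threshold $M$ at those coordinates $k \leq N$ with $\sigma_k = \infty$. If $\sigma \in \Sigma_\infty$, there is nothing to prove. Otherwise $\sigma \in \Sigma_n$ for some $n \in \mathbb{N}_0$, and I would define $\tau$ by setting $\tau_k := \sigma_k$ for $k \leq n$ and $\tau_k := \max(\sigma_n, M) + (k - n)$ for $k > n$, with the convention $\sigma_0 := 0$. Then $\tau$ is a strictly increasing sequence of positive integers, so $\tau \in \Sigma_\infty$, and by construction $\tau$ agrees with $\sigma$ at the finite coordinates $k \leq n$ and satisfies $\tau_k \geq M$ at the $\infty$-coordinates $n < k \leq N$, whence $\tau \in V$.

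The only step requiring any care is bookkeeping the precise form of neighborhoods of $\infty$ in $\mathbb{N}_\infty$, which is what makes the extension by a strictly increasing tail of sufficiently large integers legitimate. Nothing in the argument uses the compatible metric $\rho$ beyond the fact that it induces the product topology, so no metric estimates are needed.
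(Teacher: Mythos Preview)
Your argument is correct. For density, your construction and the paper's are essentially the same idea: extend the finite prefix $(\sigma_1,\dotsc,\sigma_n)$ by a strictly increasing tail of sufficiently large integers. The paper packages this as a convergent sequence $\bm{\tau}_j \coloneqq (\sigma_1,\dotsc,\sigma_n,\sigma_n+j,\sigma_n+j+1,\dotsc) \to \sigma$, while you phrase it via a single point meeting a prescribed basic neighborhood; these are equivalent in a metrizable space.

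For the $G_\delta$ claim the two proofs genuinely diverge. The paper observes that $\Sigma \setminus \Sigma_\infty = \Sigma_0 \cup \bigcup_{n \in \mathbb{N}} \Sigma_n$ is countable (each $\Sigma_n$ sits inside the countable set $\mathbb{N}^{\{1,\dotsc,n\}} \times \{\infty\}^{\mathbb{N}\setminus\{1,\dotsc,n\}}$), hence $F_\sigma$ in the metric space $\Sigma$, so $\Sigma_\infty$ is $G_\delta$. Your route is more direct: you exhibit the explicit decomposition $\Sigma_\infty = \bigcap_{n \in \mathbb{N}} V_n$ with $V_n = \{\sigma \in \Sigma : \sigma_n \in \mathbb{N}\}$ open, using only that $\{\infty\}$ is closed in $\mathbb{N}_\infty$ and that coordinate projections are continuous. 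Your argument avoids the countability observation and would work verbatim in any product of one-point compactifications, while the paper's argument gives the slightly stronger byproduct that the complement is countable (not just $F_\sigma$), which is occasionally useful elsewhere.
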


\begin{proof}
Let $\sigma \coloneqq (\sigma_k)_{k \in \mathbb{N}} \in \Sigma \setminus \Sigma_\infty$. Then, $\sigma \in \Sigma_n$ for some $n \in \mathbb{N}_0$; moreover, $\sigma = (\infty, \infty, \dotsc)$ if $n =0$, and $\sigma = (\sigma_1, \dotsc, \sigma_n, \infty, \infty, \dotsc)$ with $\sigma_n \neq \infty$ if $n \geq 1$. For each $j \in \mathbb{N}$, define 
\[
\bm{\tau}_j \coloneqq 
\begin{cases}
(j, j+1, j+2, \dotsc), &\text{if } n = 0; \\
(\sigma_1, \dotsc, \sigma_n, \sigma_n + j, \sigma_n + (j+1), \sigma_n + (j+2), \dotsc), &\text{if } n \geq 1.
\end{cases}
\]
Then, $(\bm{\tau}_j)_{j \in \mathbb{N}}$ is a sequence in $\Sigma_\infty$ converging to $\sigma$ as $j \to \infty$. This proves that $\Sigma_\infty$ is dense in $\Sigma$.

For the $G_\delta$-ness of $\Sigma_\infty$, we consider the complement $\Sigma \setminus \Sigma_\infty = \Sigma_0 \cup \bigcup_{n \in \mathbb{N}} \Sigma_n$. Here, for each $n \in \mathbb{N}$, $\Sigma_n$ is countable as a subset of the countable set $\mathbb{N}^{\{ 1, \dotsc, n \}} \times \{ \infty \}^{\mathbb{N} \setminus \{ 1, \dotsc, n \}}$. Since $\Sigma_0 = \{ (\infty, \infty, \dotsc) \}$ is a singleton, we deduce that $\Sigma \setminus \Sigma_\infty$ is countable. Note that any one-point subset of $\Sigma$ is closed in $\Sigma$ since $\Sigma$ is a metric space (Proposition \ref{Sigma is compact metric}). It follows that $\Sigma \setminus \Sigma_\infty$ is $F_\sigma$ in $\Sigma$ as a countable union of singletons.
\end{proof}

We make use of two functions between $[0,1]$ and $\Sigma$. These functions were introduced and discussed in detail in \cite{Ahn23a}. Denote by $f \colon [0,1] \to \Sigma$ the function defined by $f(x) \coloneqq (d_k(x))_{k \in \mathbb{N}}$ for each $x \in [0,1]$. Conversely, define $\varphi \colon \Sigma \to [0,1]$ by
\[
\varphi (\sigma) \coloneqq \sum_{k=1}^\infty \left( (-1)^{k+1} \prod_{j=1}^k \frac{1}{\sigma_j} \right) = \frac{1}{\sigma_1} - \frac{1}{\sigma_1 \sigma_2} + \dotsb + \frac{(-1)^{k+1}}{\sigma_1 \dotsm \sigma_k} + \dotsb
\]
for each $\sigma \coloneqq (\sigma_j)_{j \in \mathbb{N}}$ in $\Sigma$.

According to the following proposition, the preimage of a singleton under $\varphi$ is either a singleton or a doubleton.

\begin{proposition}[See {\cite[Proposition 2.3]{Ahn23a}}] \label{preimage of phi}
Let $x \in [0,1]$. Then, the following hold:
\begin{enumerate}[label=\upshape(\roman*), ref=\roman*, leftmargin=*, widest=ii]
\item \label{preimage of phi 1}
If $x \in \mathbb{I} \cup \{ 0, 1 \}$, then $\varphi^{-1}( \{ x \}) = \{ f(x) \}$.
\item \label{preimage of phi 2}
If $x \in (0,1) \cap \mathbb{Q}$, then $\varphi^{-1}( \{ x \} ) = \{ \sigma, \tau \}$, where
\begin{align*}
\sigma &\coloneqq f(x) = (d_1(x), \dotsc, d_{n-1}(x), d_n(x), \infty, \infty, \dotsc) \in \Sigma_n, \\
\tau &\coloneqq (\underbrace{d_1(x), \dotsc, d_{n-1}(x)}_{\text{$n-1$ terms}}, d_n(x)-1, d_n(x), \infty, \infty, \dotsc) \in \Sigma_{n+1},
\end{align*}
for some $n \in \mathbb{N}$.
\end{enumerate}
\end{proposition}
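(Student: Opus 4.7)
The plan is to start from the trivial observation $\varphi(f(x)) = x$ for every $x \in [0,1]$, which just restates the Pierce expansion formula \eqref{Pierce expansion}; this already gives the $\supseteq$ direction of both (\ref{preimage of phi 1}) and (\ref{preimage of phi 2}). For the reverse inclusion, the entire argument rests on a sharp two-sided bound:
\[
\frac{1}{\sigma_1+1} \le \varphi(\sigma) \le \frac{1}{\sigma_1}, \qquad \sigma \in \Sigma \setminus \Sigma_0,
\]
together with a precise identification of the equality cases. The upper bound is the standard alternating-series estimate, with equality iff every term past the first vanishes, i.e., iff $\sigma \in \Sigma_1$. For the lower bound I would exploit the shift identity $\varphi(\sigma) = \sigma_1^{-1}(1 - \varphi(T\sigma))$, combine it with the upper bound applied to the shift $T\sigma$ (namely $\varphi(T\sigma) \le 1/\sigma_2 \le 1/(\sigma_1+1)$), and track when both estimates are simultaneously tight; the result is that equality in the lower bound forces $\sigma = (\sigma_1, \sigma_1+1, \infty, \infty, \dotsc) \in \Sigma_2$.

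Classification then proceeds by iteration on the shift. Given $\varphi(\sigma) = x \in (0,1)$ with $\sigma_1 \in \mathbb{N}$, the bounds force $x \in [1/(\sigma_1+1), 1/\sigma_1]$, and unless $\sigma$ is one of the two equality cases we have $\sigma_1 = \lfloor 1/x \rfloor = d_1(x)$ and the shift identity gives $\varphi(T\sigma) = 1 - \sigma_1 x = T(x)$. Iterating, $\sigma_k = d_k(x)$ is recovered at every regular step. For irrational $x$, neither boundary value is ever attained (both $1/\sigma_k$ and $1/(\sigma_k+1)$ are rational while each relevant $T^{k-1}(x)$ is irrational), so the iteration never terminates and $\sigma = f(x) \in \Sigma_\infty$, proving (\ref{preimage of phi 1}) for $x \in \mathbb{I}$. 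The endpoints follow directly from the bounds: $\varphi(\sigma) = 0$ forces $\sigma \in \Sigma_0$, while $\varphi(\sigma) = 1$ forces $\sigma_1 = 1$ and then $\varphi(T\sigma) = 0$, so $\sigma = f(1)$.

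For $x \in (0,1) \cap \mathbb{Q}$, the Pierce algorithm terminates at some finite step $n$, so $f(x) \in \Sigma_n$. The candidate dual $\tau$ is produced by the telescoping identity
\[
\frac{(-1)^{n+1}}{d_1(x) \dotsm d_n(x)} = \frac{(-1)^{n+1}}{d_1(x) \dotsm d_{n-1}(x)(d_n(x)-1)} + \frac{(-1)^{n+2}}{d_1(x) \dotsm d_{n-1}(x)(d_n(x)-1)\,d_n(x)},
\]
which immediately yields $\varphi(\tau) = \varphi(f(x)) = x$. Running the iteration argument on an arbitrary $\sigma \in \varphi^{-1}(\{x\})$, at each step I either recover the next digit of $f(x)$ or land in one of the two equality cases, which terminates $\sigma$ as either $f(x)$ or $\tau$. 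This exhausts the preimage and yields (\ref{preimage of phi 2}).

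The main obstacle is the validity check ensuring $\tau$ actually lies in $\Sigma_{n+1}$, i.e., that $d_n(x) - 1 > d_{n-1}(x)$ whenever $n \ge 2$. I would derive this from the termination condition $T^{n-1}(x) = 1/d_n(x)$ combined with the strict inequality $T^{n-1}(x) < 1/(d_{n-1}(x)+1)$, which itself follows from $T^{n-2}(x) > 1/(d_{n-1}(x)+1)$, the defining constraint of $d_{n-1}(x) = \lfloor 1/T^{n-2}(x) \rfloor$. Together these rule out $d_n(x) = d_{n-1}(x) + 1$ and give the required separation $d_n(x) \ge d_{n-1}(x) + 2$, so that $\tau$ is a genuine element of $\Sigma_{n+1}$ and the classification is complete.
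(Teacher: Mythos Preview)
The paper does not give a proof of this proposition; it is imported from \cite[Proposition 2.3]{Ahn23a} and stated without argument. Your proposal is a correct, self-contained proof. The two-sided estimate $1/(\sigma_1+1) \le \varphi(\sigma) \le 1/\sigma_1$ with its equality cases, together with the shift identity $\varphi(\sigma) = \sigma_1^{-1}(1-\varphi(T\sigma))$, is exactly the natural device, and your iteration correctly shows that the only places where the recursion can branch away from $\sigma_k = d_k(x)$ are the two boundary cases, which force termination at step $n$ and yield precisely $f(x)$ and $\tau$. Your verification that $\tau \in \Sigma_{n+1}$ via $d_n(x) \ge d_{n-1}(x)+2$ is also correct: the key strict inequality $T^{n-2}(x) > 1/(d_{n-1}(x)+1)$ follows from $\lfloor 1/T^{n-2}(x) \rfloor = d_{n-1}(x)$, and combining it with $T^{n-1}(x) = 1/d_n(x)$ rules out $d_n(x) = d_{n-1}(x)+1$. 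One cosmetic point: in the upper equality case $x = 1/\sigma_1$ you in fact still have $\sigma_1 = d_1(x)$ and $\varphi(T\sigma) = 0 = T(x)$, so the iteration could formally continue, but it terminates immediately with $\sigma = f(x)$; this does not affect the argument.
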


\begin{proposition} [See {\cite[Subsection 3.2]{Ahn23a}}] \label{f is homeo}
For the mappings $f \colon [0,1] \to \Sigma$ and $\varphi \colon \Sigma \to [0,1]$, the following hold:
\begin{enumerate}[label=\upshape(\roman*), ref=\roman*, leftmargin=*, widest=iii]
\item \label{f is homeo 1}
$f$ is continuous at every point of $\mathbb{I} \cup \{ 0, 1 \}$ but is not continuous at any point of $\mathbb{Q} \cap (0,1)$.
\item \label{f is homeo 2}
$\varphi$ is continuous.
\item \label{f is homeo 3}
$f|_{\mathbb{I}} \colon \mathbb{I} \to \Sigma_\infty$, the restriction of $f$ to $\mathbb{I}$, is a homeomorphism with the continuous inverse $\varphi|_{\Sigma_\infty} : \Sigma_\infty \to \mathbb{I}$, the restriction of $\varphi$ to $\Sigma_\infty$.
\end{enumerate}
\end{proposition}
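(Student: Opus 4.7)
The plan is to dispatch the three parts in the order (\ref{f is homeo 2}), (\ref{f is homeo 1}), (\ref{f is homeo 3}), since (\ref{f is homeo 3}) will drop out quickly once the first two are in hand, and because the discontinuity of $f$ at rationals will be deduced by applying $\varphi$ to approximating sequences in $\Sigma_\infty$.

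For (\ref{f is homeo 2}), I would first observe that every $\sigma = (\sigma_k)_{k \in \mathbb{N}} \in \Sigma$ satisfies $\sigma_k \geq k$ in the extended sense, since the finite coordinates are strictly increasing positive integers. Using the convention $1/\infty = 0$, this yields the uniform tail estimate
\[
\left| \sum_{k = N}^\infty \frac{(-1)^{k+1}}{\sigma_1 \cdots \sigma_k} \right| \leq \sum_{k = N}^\infty \frac{1}{k!} \xrightarrow[N \to \infty]{} 0,
\]
so the partial sums defining $\varphi$ converge uniformly on $\Sigma$. If $\sigma^{(m)} \to \sigma$ in the product metric $\rho$, then coordinate-wise convergence makes each fixed partial sum of $\varphi(\sigma^{(m)})$ tend to the corresponding partial sum of $\varphi(\sigma)$, and the uniform tail bound then upgrades this to $\varphi(\sigma^{(m)}) \to \varphi(\sigma)$.

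For (\ref{f is homeo 1}), my strategy rests on the facts that $d_1$ is locally constant on $[0,1] \setminus \{1/n : n \in \mathbb{N}\}$ and that $T$ preserves irrationals (if $d_1(y) = n$ then $T(y) = 1 - ny$, which is irrational whenever $y$ is). Consequently, if $x \in \mathbb{I}$ then all iterates $T^j(x)$ are irrational, each coordinate map $y \mapsto d_k(y) = d_1(T^{k-1}(y))$ is continuous at $x$, and $f$ is continuous at $x$. The endpoints $x = 0$ and $x = 1$ succumb to direct computation: $y \to 0^+$ forces $d_1(y) \to \infty$ and $T(y) \to 0$, propagating $d_k(y) \to \infty$ for every $k$, while $y \to 1^-$ gives $d_1(y) \to 1$ and $T(y) \to 0^+$, matching $f(1) = (1, \infty, \infty, \ldots)$. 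For $x \in \mathbb{Q} \cap (0,1)$, I would exhibit discontinuity by invoking Proposition \ref{preimage of phi}(\ref{preimage of phi 2}) to produce the ``dual'' $\tau \neq \sigma = f(x)$ with $\varphi(\tau) = x$, then choose $\tau^{(m)} \in \Sigma_\infty$ with $\tau^{(m)} \to \tau$ (possible by Proposition \ref{Sigma infty is dense G delta proposition}). Setting $y_m \coloneqq \varphi(\tau^{(m)}) \in \mathbb{I}$, part (\ref{f is homeo 2}) gives $y_m \to x$, while Proposition \ref{preimage of phi}(\ref{preimage of phi 1}) forces $f(y_m) = \tau^{(m)} \to \tau \neq f(x)$.

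Finally, for (\ref{f is homeo 3}), the Pierce expansion identity \eqref{Pierce expansion} yields $\varphi \circ f = \mathrm{id}_{[0,1]}$, while Proposition \ref{preimage of phi}(\ref{preimage of phi 1}) together with $\varphi(\Sigma_\infty) \subseteq \mathbb{I}$ gives $f \circ \varphi|_{\Sigma_\infty} = \mathrm{id}_{\Sigma_\infty}$. Since $f|_\mathbb{I}$ is continuous by (\ref{f is homeo 1}) and $\varphi|_{\Sigma_\infty}$ is continuous by (\ref{f is homeo 2}), these two are mutually inverse continuous bijections, i.e.\ a homeomorphism pair.

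The main obstacle I anticipate is the discontinuity argument at rational $x$, which tangles together the two-expansion phenomenon of Proposition \ref{preimage of phi} with careful approximation inside $\Sigma_\infty$; the trick is that one cannot produce the dual sequence $\tau$ as a limit of Pierce digits of approaching \emph{rationals} directly, so one routes through $\varphi$ applied to $\Sigma_\infty$-approximants. Everything else is fairly mechanical once the uniform series bound in (\ref{f is homeo 2}) is written down.
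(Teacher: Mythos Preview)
The paper does not actually prove this proposition; it is quoted from \cite[Subsection 3.2]{Ahn23a} without argument, so there is nothing to compare against. Your proposed proof is correct and self-contained.

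Two small points are worth making explicit. First, in the discontinuity step at rational $x$ you write $y_m \coloneqq \varphi(\tau^{(m)}) \in \mathbb{I}$; this needs the fact that $\varphi(\Sigma_\infty) \subseteq \mathbb{I}$, which you only state later in part (\ref{f is homeo 3}). It follows immediately from Proposition~\ref{preimage of phi}, since for every rational $x$ the preimage $\varphi^{-1}(\{x\})$ lies entirely in $\bigcup_{n \in \mathbb{N}_0}\Sigma_n$, but you should record it before using it. Second, for part (\ref{f is homeo 3}) you should also note that $f(\mathbb{I}) \subseteq \Sigma_\infty$ (irrational $x$ has $T^n(x) \neq 0$ for all $n$, hence every $d_n(x)$ is finite), so that the restriction $f|_{\mathbb{I}} \colon \mathbb{I} \to \Sigma_\infty$ is well-defined as a map into $\Sigma_\infty$; the identity $f \circ \varphi|_{\Sigma_\infty} = \mathrm{id}_{\Sigma_\infty}$ alone does not give this.
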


For each $\sigma \coloneqq (\sigma_k)_{k \in \mathbb{N}} \in \Sigma_n$, $n \in \mathbb{N}$, we define the {\em cylinder set} associated with $\sigma$ by
\begin{align*}
\Upsilon_\sigma
&\coloneqq \{ (\tau_k)_{k \in \mathbb{N}} \in \Sigma : \tau_k = \sigma_k \text{ for all } 1 \leq k \leq n \} \\
&= \Sigma \cap \left( \prod_{k=1}^n \{ \sigma_k \} \times \prod_{k > n} \mathbb{N}_\infty \right),
\end{align*}
which is clopen in $\Sigma$, and define the {\em fundamental interval} associated with $\sigma$ by
\begin{align} \label{definition of I sigma}
I_\sigma \coloneqq f^{-1}(\Upsilon_\sigma) = \{ x \in [0,1] : d_k(x) = \sigma_k \text{ for all } 1 \leq k \leq n \}.
\end{align}

\begin{proposition}[{\cite[Theorem 1]{Sha86}}] \label{I sigma}
Fix $\sigma \coloneqq (\sigma_k)_{k \in \mathbb{N}} \in \Sigma_n$ for some $n \in \mathbb{N}$. Put
\[
\widehat{\sigma} \coloneqq (\underbrace{\sigma_1, \dotsc, \sigma_{n-1}}_{\text{$n-1$ terms}}, \sigma_n+1, \infty, \infty, \dotsc) \in \Sigma_n.
\]
Then, $I_\sigma$ is the subinterval of $[0,1]$ with endpoints $\varphi (\sigma)$ and $\varphi (\widehat{\sigma})$. Consequently,
\begin{align} \label{diam I sigma}
\diam I_\sigma = | \varphi (\sigma) - \varphi (\widehat{\sigma})| = \left( \prod_{j=1}^{n} \frac{1}{\sigma_j} \right) \frac{1}{\sigma_n+1}.
\end{align}
\end{proposition}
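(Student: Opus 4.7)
The plan is to proceed by induction on $n$, exploiting the recursive self-similarity of the fundamental intervals under the shift map $T$. For the base case $n=1$, one computes $I_\sigma$ directly from the definition: the condition $d_1(x) = \sigma_1$ unwinds to $\lfloor 1/x \rfloor = \sigma_1$, so $I_\sigma = (1/(\sigma_1+1), 1/\sigma_1]$. Since $\varphi(\sigma) = 1/\sigma_1$ and $\varphi(\widehat{\sigma}) = 1/(\sigma_1+1)$, both assertions are immediate.

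For the inductive step, the identity $d_k(x) = d_{k-1}(T(x))$ for $k \geq 2$ yields the decomposition $I_\sigma = I_{(\sigma_1, \infty, \ldots)} \cap T^{-1}(I_{\sigma'})$, where $\sigma' \coloneqq (\sigma_2, \ldots, \sigma_n, \infty, \ldots) \in \Sigma_{n-1}$. On $I_{(\sigma_1, \infty, \ldots)} = (1/(\sigma_1+1), 1/\sigma_1]$, the map $T$ restricts to the orientation-reversing affine bijection $x \mapsto 1 - \sigma_1 x$ onto $[0, 1/(\sigma_1+1))$; and since $\sigma_2 > \sigma_1$, the inductive hypothesis places $I_{\sigma'}$, a subinterval with endpoints $\varphi(\sigma')$ and $\varphi(\widehat{\sigma'})$, inside the range of this restriction, except possibly for the point $1/(\sigma_1+1)$ when $\sigma_2 = \sigma_1 + 1$. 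Pulling back by the inverse map $y \mapsto (1-y)/\sigma_1$ then produces a subinterval of $I_{(\sigma_1, \infty, \ldots)}$ whose endpoints are $(1 - \varphi(\sigma'))/\sigma_1$ and $(1 - \varphi(\widehat{\sigma'}))/\sigma_1$.

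The algebraic core is the pair of identities
\[
\varphi(\sigma) = \frac{1 - \varphi(\sigma')}{\sigma_1}, \qquad \varphi(\widehat{\sigma}) = \frac{1 - \varphi(\widehat{\sigma'})}{\sigma_1},
\]
each of which follows by factoring $1/\sigma_1$ out of the alternating sum defining $\varphi$ and reindexing (noting that $\widehat{\sigma'}$ differs from $\sigma'$ in the same way $\widehat{\sigma}$ differs from $\sigma$). These identify the endpoints of $I_\sigma$ as $\varphi(\sigma)$ and $\varphi(\widehat{\sigma})$, so the diameter becomes
\[
|\varphi(\sigma) - \varphi(\widehat{\sigma})| = \frac{1}{\sigma_1} |\varphi(\sigma') - \varphi(\widehat{\sigma'})|,
\]
which by the inductive hypothesis equals $\prod_{j=1}^n (1/\sigma_j) \cdot 1/(\sigma_n+1)$. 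The main subtlety to be careful about, stemming from Proposition~\ref{preimage of phi}, is that rational endpoints may admit two Pierce representations, so $I_\sigma$ can be open, closed, or half-open depending on the parity of $n$ and on whether $\sigma_2 = \sigma_1+1$; this is invisible to the proposition, which asserts only the identification of endpoints and the diameter formula, both of which are insensitive to endpoint inclusion.
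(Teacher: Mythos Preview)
Your inductive argument is correct and is a standard way to establish the fundamental-interval description for Pierce expansions. Note, however, that the paper does not actually supply its own proof of this proposition: it is stated with a citation to \cite[Theorem~1]{Sha86} and is treated as a known result, so there is no in-paper argument to compare against. Your proof is a clean, self-contained derivation of Shallit's theorem via the self-similarity $I_\sigma = I_{(\sigma_1,\infty,\ldots)} \cap T^{-1}(I_{\sigma'})$ and the affine identity $\varphi(\sigma) = (1-\varphi(\sigma'))/\sigma_1$; the endpoint-inclusion caveat you flag is indeed irrelevant to the proposition as stated, which concerns only the endpoints and diameter.
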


We provide the definitions of the Hausdorff dimension and the box-counting dimension below.

\begin{definition} [See {\cite[Chapter 3]{Fal14}}] \label{hdim definition}
For a non-empty subset $F$ of $[0,1]$, the {\em Hausdorff dimension} of $F$, denoted $\hdim F$, is defined as
\[
\hdim F \coloneqq \inf \{ s \geq 0 : \mathcal{H}^s (F) = 0 \} = \sup \{ s : \mathcal{H}^s (F) = \infty \},
\]
where, for $s \geq 0$,
\[
\mathcal{H}^s (F) \coloneqq \lim_{\delta \to 0} \left( \inf \left\{ \sum_{k \in \mathbb{N}} (\diam U_k)^s : F \subseteq \bigcup_{k \in \mathbb{N}} U_k \text{ and } \diam U_k \in (0, \delta] \text{ for each } k \in \mathbb{N} \right\} \right).
\]
\end{definition}

\begin{definition} [See {\cite[Chapter 2]{Fal14}}] \label{bdim definition}
For a non-empty subset $F$ of $[0,1]$, the {\em box-counting dimension} of $F$, denoted $\bdim F$, is defined as
\[
\bdim F \coloneqq \lim_{\delta \to 0} \frac{\log N_\delta (F)}{-\log \delta},
\]
provided the limit exists, where $N_\delta(F)$ denotes the smallest number of sets of diameter not exceeding $\delta$ that cover $F$.
\end{definition}

We list two propositions which will be used in calculations of dimensions. Particularly, the first one provides a useful property of the box-counting dimension, while this property does not hold for the Hausdorff dimension in general.

\begin{proposition} [See {\cite[p.~37]{Fal14}}] \label{dense bdim proposition}
For any dense subset $F$ of $[0,1]$, we have $\bdim F = 1$.
\end{proposition}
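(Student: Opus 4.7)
The plan is to exploit the standard fact that the box-counting dimension is invariant under taking closures, i.e., $\bdim F = \bdim \overline{F}$ whenever either side exists. Since $F$ is dense in $[0,1]$, we have $\overline{F} = [0,1]$, so once closure invariance is in hand, the conclusion reduces to the direct computation $\bdim [0,1] = 1$.

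For the closure invariance, I would argue by a simple covering manipulation. Fix $\delta > 0$ and let $\{U_1, \dotsc, U_m\}$ be any collection of sets of diameter at most $\delta$ covering $F$, realizing $N_\delta(F) = m$. Since $\diam \overline{U_k} = \diam U_k \leq \delta$ for each $k$, and since $\bigcup_{k=1}^m \overline{U_k}$ is closed and contains $F$, it must contain $\overline{F}$. Hence $\{\overline{U_1}, \dotsc, \overline{U_m}\}$ is a $\delta$-cover of $\overline{F}$, giving $N_\delta(\overline{F}) \leq N_\delta(F)$. The reverse inequality $N_\delta(F) \leq N_\delta(\overline{F})$ is immediate from $F \subseteq \overline{F}$. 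Dividing $\log N_\delta(\cdot)$ by $-\log \delta$ and letting $\delta \to 0^+$ yields $\bdim F = \bdim \overline{F}$, with the existence of either limit forcing the existence of the other.

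To conclude, I would verify $\bdim [0,1] = 1$ directly from Definition \ref{bdim definition}: for each $\delta \in (0,1)$, partitioning $[0,1]$ into closed subintervals of length $\delta$ gives $N_\delta([0,1]) \leq \lceil 1/\delta \rceil$, while the total-length inequality $\sum_k \diam U_k \geq 1$ for any cover forces $N_\delta([0,1]) \geq 1/\delta$. Hence $\log N_\delta([0,1])/(-\log \delta) \to 1$, completing the proof. There is no genuine obstacle here: the only point requiring mild care is that the closure in the closure-invariance step must be taken inside the ambient space $[0,1]$ in which $F$ is assumed dense, which is automatic from our conventions.
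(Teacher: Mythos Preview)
Your argument is correct. The paper itself does not supply a proof of this proposition; it simply cites \cite[p.~37]{Fal14}, where exactly the closure-invariance property $\bdim F = \bdim \overline{F}$ that you prove is stated and used in the same way. So your approach coincides with the standard one in the cited reference.
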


\begin{proposition}[{\cite[Theorem 1.1]{Ahn23b}}] \label{hdim E alpha}
For each $\alpha \in (0,1]$, let
\[
E(\alpha) \coloneqq \left\{ x \in [0,1] : \lim_{n \to \infty} \frac{\log n}{\log d_n(x)} = \alpha \right\}.
\]
Then, $\hdim E(\alpha) = 1 - \alpha$ for each $\alpha \in (0,1]$.
\end{proposition}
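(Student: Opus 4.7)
The plan is to prove $\hdim E(\alpha) = 1 - \alpha$ by establishing matching lower and upper bounds. Throughout I use that $x \in E(\alpha)$ is equivalent to $\log d_n(x)/\log n \to 1/\alpha$ (valid because $d_n(x) \to \infty$ whenever the Pierce expansion is infinite).

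For the lower bound $\hdim E(\alpha) \geq 1 - \alpha$, the case $\alpha = 1$ is trivial, so assume $\alpha \in (0,1)$ and fix $\beta \in (0, 1/\alpha - 1)$. Since $(n+1)^{1/\alpha} - n^{1/\alpha} \sim n^{1/\alpha-1}/\alpha$ grows faster than $n^\beta$, for each sufficiently large $n$ I choose a block $J_n$ of $\lfloor n^\beta \rfloor$ consecutive integers inside $[n^{1/\alpha}, (n+1)^{1/\alpha})$, which automatically ensures $\max J_n < \min J_{n+1}$. Define
\[
F := \{ x \in [0,1] : d_n(x) \in J_n \text{ for all sufficiently large } n \}.
\]
For $x \in F$, $\log d_n(x) = (1/\alpha)\log n + O(1)$, so $F \subseteq E(\alpha)$. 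Endow $F$ with the Bernoulli-type probability measure $\mu$ assigning mass $\prod_{k=1}^n |J_k|^{-1}$ to each admissible level-$n$ cylinder, and use Proposition \ref{I sigma} together with Stirling to compute the local dimension:
\[
\lim_{n \to \infty} \frac{\log \mu(I_n(x))}{\log \diam I_n(x)} = \lim_{n \to \infty} \frac{\beta \log n!}{(1/\alpha)\log n! + O(\log n)} = \alpha\beta
\]
for every $x \in F$. The standard argument relating $\mu(B(x,r))$ to $\mu(I_n(x))$ for the appropriate $n$, combined with the mass distribution principle, yields $\hdim F \geq \alpha\beta$. Sending $\beta \uparrow 1/\alpha - 1$ completes the lower bound.

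For the upper bound $\hdim E(\alpha) \leq 1 - \alpha$, fix $\epsilon > 0$. Any $x \in E(\alpha)$ eventually satisfies $n^{(1-\epsilon)/\alpha} \leq d_n(x) \leq n^{(1+\epsilon)/\alpha}$, so $E(\alpha) \subseteq \bigcup_{N \in \mathbb{N}} E_{N,\epsilon}$, where $E_{N,\epsilon}$ imposes this bound for all $n \geq N$. By countable stability of $\hdim$, it suffices to bound $\hdim E_{N,\epsilon}$. For each $n$, cover $E_{N,\epsilon}$ by the admissible level-$n$ cylinders. Since $(\sigma_1, \dots, \sigma_n)$ is strictly increasing with $\sigma_n \leq n^{(1+\epsilon)/\alpha}$, the count of admissible cylinders is at most $\binom{\lfloor n^{(1+\epsilon)/\alpha} \rfloor}{n} \leq (e \cdot n^{(1+\epsilon)/\alpha - 1})^n$. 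By Proposition \ref{I sigma} and the lower bound $\sigma_k \geq k^{(1-\epsilon)/\alpha}$ for $k \geq N$, each admissible cylinder has $\diam I_\sigma \leq C_N \cdot (n!)^{-(1-\epsilon)/\alpha} \cdot n^{-(1-\epsilon)/\alpha}$. Combining via Stirling, the $s$-Hausdorff sum over admissible level-$n$ cylinders is bounded by
\[
\exp\!\bigl( n \log n \cdot [(1+\epsilon)/\alpha - 1 - s(1-\epsilon)/\alpha] + O(n) \bigr),
\]
which tends to $0$ as $n \to \infty$ whenever $s > (1+\epsilon-\alpha)/(1-\epsilon)$. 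Letting $\epsilon \downarrow 0$ gives $\hdim E_{N,\epsilon} \leq 1 - \alpha$, hence $\hdim E(\alpha) \leq 1 - \alpha$.

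The main obstacle is getting the combinatorial count right in the upper bound: the binomial bound $\binom{\lfloor n^{(1+\epsilon)/\alpha} \rfloor}{n}$, which exploits strict monotonicity of the Pierce digits globally, is essential; a naive product over per-level digit ranges ignores the interlocking of ranges across levels and only yields $\hdim \leq 1$. Once this observation is in place, Stirling produces the matching balance $\alpha(1/\alpha - 1) = 1 - \alpha$ in both the lower and upper bound computations.
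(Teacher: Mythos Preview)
The paper does not prove this proposition: it is imported verbatim from \cite{Ahn23b} (Theorem~1.1 there) and used as a black box, the lower bound $\hdim E(\alpha)\ge 1-\alpha$ being exactly what feeds into Lemma~\ref{hdim formula lemma}. So there is no ``paper's own proof'' to compare against. That said, your upper-bound argument is essentially the same covering scheme the paper deploys for a related quantity in Lemma~\ref{hidm upper bound lemma}: bound the number of admissible level-$k$ cylinders by the binomial coefficient $\binom{\lfloor k^{1/(\alpha-\varepsilon)}\rfloor}{k}$, bound each diameter via \eqref{diam I sigma}, and let Stirling balance the two. Your observation that this global binomial count (exploiting strict monotonicity of all Pierce digits, not just the constrained ones) is the key combinatorial input is exactly right and matches what the paper does.

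Your lower bound is correct in outline, but the line ``the standard argument relating $\mu(B(x,r))$ to $\mu(I_n(x))$'' hides the only nontrivial step. In the Pierce setting the ratio $\diam I_{n+1}(x)/\diam I_n(x)\asymp d_{n+1}(x)^{-1}\asymp n^{-1/\alpha}$ tends to zero, so for $r\in(\diam I_{n+1}(x),\diam I_n(x)]$ one only gets $\mu(B(x,r))\le C\,\mu(I_n(x))\le C'(\diam I_n(x))^{\alpha\beta}\le C' r^{\alpha\beta} n^{\beta}$, with an unavoidable extra factor $n^\beta$. The argument is saved because $-\log r\asymp (1/\alpha)\log n!\asymp (1/\alpha)n\log n$, so $n=O(\log(1/r))$ and $n^\beta$ is sub-polynomial in $1/r$; hence $\mu(B(x,r))\le C_\varepsilon r^{\alpha\beta-\varepsilon}$ for every $\varepsilon>0$, and the mass distribution principle still yields $\hdim F\ge\alpha\beta$. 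You should make this explicit rather than call it standard; it is the one place where a reader could reasonably object. A minor related point: defining $F$ with ``for all sufficiently large $n$'' makes $F$ a countable union, whereas the measure $\mu$ lives on the set with the constraint for \emph{all} $n$ (after fixing the first few $J_n$ by hand); this is harmless by countable stability but worth stating cleanly.
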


\begin{remark}
Although the preceding proposition is sufficient for our subsequent discussions, it is worth noting that the result stated therein also holds when $\alpha=0$. In other words, we have
\[
\hdim E(0) = 1, \quad \text{where} \quad E(0) \coloneqq \left\{ x \in [0,1] : \lim_{n \to \infty} \frac{\log n}{\log d_n(x)} = 0 \right\}.
\]
Indeed, by the law of large numbers in Pierce expansions \cite[Theorem 16]{Sha86}, we know that the set
\[
\left\{ x \in [0,1] : \lim_{n \to \infty} \frac{n}{\log d_n(x)} = 1 \right\}
\]
has full Lebesgue measure on $[0,1]$. As this set is a subset of $E(0)$, we observe that $E(0)$ is of full Lebesgue measure on $[0,1]$, and consequently, $E(0)$ has full Hausdorff dimension.
\end{remark}

The last topic of this section covers some topological preliminaries. 

\begin{proposition} [See {\cite[Lemma 20.7]{Din74}}] \label{infimum of lsc is lsc proposition}
Let $X$ and $Y$ be compact metrizable spaces and $h \colon X \to Y$ a continuous surjection. Suppose that a function $g \colon X \to [-\infty, \infty]$ is lower semicontinuous. Then, the mapping
\[
y \mapsto \inf_{x \in h^{-1}(\{ y \})} g(x)
\]
is lower semicontinuous on $Y$.
\end{proposition}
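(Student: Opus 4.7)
The plan is to establish the lower semicontinuity of $G(y) \coloneqq \inf_{x \in h^{-1}(\{y\})} g(x)$ by showing that, for every $c \in \mathbb{R}$, the sublevel set $\{y \in Y : G(y) \leq c\}$ is closed in $Y$.

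First I would observe that for each $y \in Y$, the fiber $h^{-1}(\{y\})$ is non-empty by surjectivity of $h$ and closed in $X$ by continuity of $h$; since $X$ is compact, the fiber is compact. A lower semicontinuous function attains its infimum on a non-empty compact set (via a minimizing sequence together with the defining $\liminf$ inequality), so $g$ attains the value $G(y)$ somewhere on $h^{-1}(\{y\})$. Consequently, $G(y) \leq c$ if and only if there exists $x \in h^{-1}(\{y\})$ with $g(x) \leq c$, which yields the identity
\[
\{y \in Y : G(y) \leq c\} = h\bigl( \{ x \in X : g(x) \leq c \} \bigr).
\]

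Next I would invoke the lower semicontinuity of $g$ to conclude that $\{ x \in X : g(x) \leq c \}$ is closed in the compact space $X$, hence compact. Its image under the continuous map $h$ is then compact in $Y$, and since $Y$ is metrizable (hence Hausdorff) compact subsets of $Y$ are closed. This gives the closedness of $\{y \in Y : G(y) \leq c\}$ for every $c \in \mathbb{R}$, which is precisely the lower semicontinuity of $G$ as a $[-\infty,\infty]$-valued function on $Y$.

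I do not foresee a genuine obstacle: the argument is a routine combination of two standard facts, namely that lower semicontinuous functions attain their infima on compact sets (which is what upgrades the inclusion in the displayed identity to an equality) and that continuous images of compact sets are closed in Hausdorff spaces. The surjectivity of $h$ is used only to guarantee that every fiber is non-empty, so that $G$ is well-defined on all of $Y$; no metric property of $X$ or $Y$ beyond Hausdorffness is needed in the final step.
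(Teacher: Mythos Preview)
Your argument is correct. Both you and the paper prove lower semicontinuity by showing every sublevel set $\{y : G(y) \leq c\}$ is closed, and both begin by noting that the infimum over each (compact) fiber is attained. The difference is in how the closedness is then established: the paper runs a direct sequential argument (take $\eta_n \to \eta$ in the sublevel set, choose minimizers $\xi_n$ in the fibers, extract a convergent subsequence by compactness of $X$, and use continuity of $h$ together with lower semicontinuity of $g$ to place $\eta$ in the sublevel set), whereas you package the same ingredients into the identity $\{y : G(y) \leq c\} = h(\{x : g(x) \leq c\})$ and then appeal once to ``continuous image of a compact set is closed in a Hausdorff space.'' Your route is a little slicker and makes transparent that metrizability (and hence sequences) plays no essential role beyond guaranteeing Hausdorffness; the paper's sequential proof, on the other hand, is self-contained and does not require the reader to recall the compactness-image fact.
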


We remark that in \cite{Din74}, the function $g$ was assumed to be non-negative real-valued. We include the proof to make it clear that such an assumption is not necessary in our proposition.

\begin{proof} [Proof of Proposition \ref{infimum of lsc is lsc proposition}]
It is not hard to verify that for each $y \in Y$, we have $\inf_{x \in h^{-1}(\{ y \})} g(x) = g(x')$ for some $x' \in h^{-1}( \{ y \})$. (One can find the details in \cite{Din74}.)

Let $\alpha \in \mathbb{R}$ be arbitrary. We show that the sublevel set
\[
A_\alpha \coloneqq \left\{ y \in Y : \inf_{x \in h^{-1}(\{ y \})} g(x) \leq \alpha \right\}
\]
is closed in $Y$. Let $(\eta_n)_{n \in \mathbb{N}}$ be a sequence in $A_\alpha$ converging to some $\eta \in Y$. As noted in the preceding paragraph, for each $n \in \mathbb{N}$, we can find $\xi_n \in X$ such that $\inf_{x \in h^{-1}(\{ \eta_n \})} g(x) = g(\xi_n)$ with $\xi_n \in h^{-1}(\{ \eta_n \})$. Since $X$ is compact, the sequence $(\xi_n)_{n \in \mathbb{N}}$ has a convergent subsequence $(\xi_{n_k})_{k \in \mathbb{N}}$ with limit, say $\xi$, in $X$. Now, by the continuity of $h$, we have $\eta_{n_k} = h(\xi_{n_k}) \to h(\xi)$ as $k \to \infty$. On the other hand, since $\eta_n \to \eta$ as $n \to \infty$ in the metrizable space $Y$, it follows that $h(\xi) = \eta$, or, equivalently, $\xi \in h^{-1}(\{ \eta \})$. Hence, $\inf_{x \in h^{-1}(\{ \eta \})} g(x) \leq g(\xi)$. Furthermore, since $g(\xi_n) \leq \alpha$ for each $n \in \mathbb{N}$ by the construction of $(\xi_n)_{n \in \mathbb{N}}$, and since $g$ is lower semicontinuous, we obtain $g(\xi) \leq \alpha$. Therefore, $\inf_{x \in h^{-1}(\{ \eta \})} g(x) \leq \alpha$, which implies $\eta \in A_\alpha$, completing the proof.
\end{proof}

A classical theorem by Baire on lower semicontinuous functions can be stated as follows:

\begin{proposition} [See {\cite[Theorem 23.19]{Kec95} and \cite[p.~132, Exercise 4(g)]{Str81}}] \label{lower semicontinuous function proposition}
Any $[-\infty,\infty]$-valued lower semicontinuous function on a metrizable space $X$ is the pointwise limit of some sequence of real-valued continuous functions on $X$.
\end{proposition}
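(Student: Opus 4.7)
The plan is to use Baire's classical Moreau--Yosida inf-convolution, with a preliminary reduction to handle the possibility that $g$ takes the values $\pm\infty$. First, I would fix an increasing homeomorphism $\psi \colon [-\infty,\infty] \to [0,1]$---for instance $\psi(t) \coloneqq (1 + \tanh t)/2$ extended by $\psi(\pm\infty) \coloneqq \frac{1\pm 1}{2}$---and set $\tilde g \coloneqq \psi \circ g$. Since composition with an increasing continuous function preserves lower semicontinuity, $\tilde g \colon X \to [0,1]$ is bounded and lsc, reducing the problem to the bounded case.

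Next, I would fix a compatible metric $d$ on $X$ (available because $X$ is metrizable) and define
\[
\tilde g_n(x) \coloneqq \inf_{y \in X} \bigl( \tilde g(y) + n\,d(x,y) \bigr), \qquad n \in \mathbb{N}.
\]
A standard triangle-inequality check shows each $\tilde g_n$ is $n$-Lipschitz, hence continuous, and the two-sided bound $0 \leq \tilde g_n \leq \tilde g \leq 1$ follows from $\tilde g \geq 0$ and the choice $y = x$. To establish $\tilde g_n(x) \nearrow \tilde g(x)$ for every $x \in X$, I would take $\alpha < \tilde g(x)$, use lower semicontinuity to find $\epsilon > 0$ such that $\tilde g(y) \geq \alpha$ whenever $d(x,y) < \epsilon$, and then verify that for $n$ sufficiently large the inequality $\tilde g(y) + n\,d(x,y) \geq \alpha$ holds both in the near regime (from the lsc bound) and in the far regime (where $n\,d(x,y) \geq n\epsilon \geq \alpha$, using $\tilde g \geq 0$), so that $\tilde g_n(x) \geq \alpha$.

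Finally, since $\psi^{-1}$ blows up at the endpoints $0$ and $1$, I would clip $\tilde g_n$ into the open interior by setting
\[
\hat g_n \coloneqq \min\!\left( 1 - \tfrac{1}{n+1},\, \max\!\left( \tfrac{1}{n+1},\, \tilde g_n \right)\right),
\]
which remains continuous and, by a short case analysis depending on whether $\tilde g(x) \in (0,1)$, $\tilde g(x) = 0$, or $\tilde g(x) = 1$, still satisfies $\hat g_n \to \tilde g$ pointwise. The sequence $g_n \coloneqq \psi^{-1} \circ \hat g_n$ then consists of real-valued continuous functions on $X$, and the continuity of $\psi^{-1} \colon [0,1] \to [-\infty,\infty]$ as a map to the extended reals promotes $\hat g_n \to \tilde g$ to $g_n \to g$ pointwise, including at points where $g(x) = \pm\infty$. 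The main technical subtlety is precisely this final clipping step: ensuring real-valuedness of the approximants while preserving the pointwise limit through the inversion of $\psi$; the raw inf-convolution of an $[-\infty,\infty]$-valued function can return $-\infty$, and bare application of $\psi^{-1}$ would return $\pm\infty$ at points where $\tilde g_n$ happens to hit $0$ or $1$.
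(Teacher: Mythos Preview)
Your proof is correct and follows a genuinely different route from the paper. The paper separates off the closed set $Y=\{x:g(x)=-\infty\}$, cites an external result (Stromberg) to approximate $g$ by real-valued continuous functions on $X\setminus Y$, and then uses the Tietze extension theorem to glue these to the constant value $-n$ on $Y$ via the closed sets $Z_n=\{x:d(x,Y)\ge 1/n\}$. By contrast, you first conjugate by a homeomorphism $\psi\colon[-\infty,\infty]\to[0,1]$ to reduce to a bounded nonnegative lsc function, then use the Moreau--Yosida inf-convolution $\tilde g_n(x)=\inf_{y}(\tilde g(y)+n\,d(x,y))$ to get Lipschitz approximants increasing to $\tilde g$, and finally clip into $[\tfrac{1}{n+1},1-\tfrac{1}{n+1}]$ before inverting $\psi$ to recover real-valued continuous $g_n$. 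Your argument is entirely self-contained---no black-box citation, no Tietze---and the clipping step handles both endpoints $\pm\infty$ symmetrically in one stroke, whereas the paper treats $-\infty$ explicitly and leaves the $+\infty$ case buried inside the Stromberg reference. A minor simplification: rather than the three-case analysis for $\hat g_n\to\tilde g$, you could simply note that $|\hat g_n-\tilde g_n|\le\tfrac{1}{n+1}$ always, so $\hat g_n\to\tilde g$ follows immediately from $\tilde g_n\to\tilde g$.
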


\begin{proof}
Let $X$ be a metrizable space and $g \colon X \to [-\infty, \infty]$ a lower semicontinuous function. Put $Y \coloneqq \{ x \in X : g(x) = - \infty \}$, which is closed in $X$ by the lower semicontinuity of $g$. By \cite[p.~132, Exercise 4(g)]{Str81}, there is a sequence of real-valued continuous functions $(g_n)_{n \in \mathbb{N}}$ on $X \setminus Y$ such that $\lim_{n \to \infty} g_n (x) = g(x)$ for all $x \in X \setminus Y$. 

If $Y$ is empty, then we are done. So we assume that $Y$ is non-empty. Let $d$ be a compatible metric on $X$. Define $d (x, Y) \coloneqq \inf \{ d(x,y) : y \in Y \}$ for each $x \in X$. It is a standard fact that the mapping $x \mapsto d(x,Y)$ is continuous on $X$. For each $n \in \mathbb{N}$, let
\[
Z_n \coloneqq \left\{ x \in X : d (x, Y) \geq \frac{1}{n} \right\}.
\]
Then, $Z_n$ is closed in $X$, by the continuity of $d(x,Y)$, and $Z_n \cap Y = \varnothing$. Moreover, we note that $X = Y \cup \bigcup_{n \in \mathbb{N}} Z_n$. To see this, let, if possible, $x \in X \setminus \bigcup_{n \in \mathbb{N}} Z_n$. Then, $x \not \in Z_n$ for any $n \in \mathbb{N}$, and so, by definition, we have $d (x,Y) < 1/n$ for all $n \in \mathbb{N}$. Hence, $d (x,Y) = 0$, and this implies that $x \in Y$ since $Y$ is closed.

For each $n \in \mathbb{N}$, define
\[
\widetilde{g}_n(x) \coloneqq 
\begin{cases}
-n, &\text{if } x \in Y; \\
g_n(x), &\text{if } x \in Z_n,
\end{cases}
\]
for each $x \in Y \cup Z_n$. Then, $\widetilde{g}_n$ is a real-valued continuous function on the closed subset $Y \cup Z_n$ of $X$, for each $n \in \mathbb{N}$. By the Tietze extension theorem, for each $n \in \mathbb{N}$, there is a real-valued continuous extension of $\widetilde{g}_n$ to the whole space $X$, that is, there exists a continuous function $G_n : X \to \mathbb{R}$ satisfying $G_n (x) = \widetilde{g}_n (x)$ for all $x \in Y \cup Z_n$. Now, $(G_n)_{n \in \mathbb{N}}$ is a sequence of real-valued continuous functions on $X$; hence, it remains to show that $G_n \to g$ pointwise as $n \to \infty$ everywhere on $X$. If $x \in Y$, then $g(x) = -\infty = \lim_{n \to \infty} (-n) = \lim_{n \to \infty} \widetilde{g}_n(x) = \lim_{n \to \infty} G_n(x)$. On the other hand, assume $x \in X \setminus Y$. Then, $x \in Z_m$ for some $m \in \mathbb{N}$, as noted in the preceding paragraph. Then, since $Z_m \subseteq Z_{m+1} \subseteq \dotsb$, we have $G_n(x) = \widetilde{g}_n(x) = g_n(x)$ for all $n \geq m$. Hence, $g(x) = \lim_{n \to \infty} g_n(x) = \lim_{n \to \infty} G_n(x)$. This completes the proof of the proposition.
\end{proof}

The following poposition provides one property, regarding the Baire category and the Borel hierarchy, of the set of discontinuous points of a function of Baire class one.

\begin{proposition} [See {\cite[Theorem 24.14]{Kec95}}] \label{not first Baire proposition}
Let $g$ be a real-valued function on a metrizable space $X$. If $g$ is of Baire class one, then the set of points of discontinuity of $g$ is meager $F_\sigma$ in $X$.
\end{proposition}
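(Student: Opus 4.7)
The plan is to use the classical oscillation argument. Define the oscillation of $g$ at a point $x \in X$ by $\omega_g(x) \coloneqq \lim_{\delta \to 0^+} \sup \{|g(y) - g(z)| : y, z \in B(x,\delta)\}$; then the set of discontinuities of $g$ is $\bigcup_{n \in \mathbb{N}} D_n$, where $D_n \coloneqq \{x \in X : \omega_g(x) \geq 1/n\}$. Each $D_n$ is closed in $X$ by a routine argument (if $\omega_g(x_k) \geq 1/n$ and $x_k \to x$, then every ball about $x$ eventually contains a small ball about some $x_k$), which already yields the $F_\sigma$ part of the conclusion.

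The substantive step is to show that each $D_n$ is nowhere dense. Fix a sequence $(g_k)_{k \in \mathbb{N}}$ of real-valued continuous functions on $X$ with $g_k \to g$ pointwise, and let $U \subseteq X$ be an arbitrary non-empty open set; it suffices to produce a non-empty open $V \subseteq U$ disjoint from $D_n$. For each $m \in \mathbb{N}$, set
\[
F_m \coloneqq \bigcap_{p,q \geq m} \left\{ x \in U : |g_p(x) - g_q(x)| \leq 1/(5n) \right\},
\]
which is closed in $U$ by continuity of the $g_k$. Since $(g_k(x))_{k \in \mathbb{N}}$ is Cauchy for every $x \in U$, we have $U = \bigcup_{m \in \mathbb{N}} F_m$, and the Baire category theorem (applicable because open subspaces of completely metrizable spaces are Baire, which covers the relevant application of the proposition) yields an index $M$ and a non-empty open $W \subseteq F_M$. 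Letting $q \to \infty$ gives $|g_p(x) - g(x)| \leq 1/(5n)$ for every $x \in W$ and every $p \geq M$. Picking any $x_0 \in W$, the continuity of $g_M$ at $x_0$ supplies a neighborhood $V \subseteq W$ on which $|g_M(y) - g_M(x_0)| \leq 1/(5n)$, and a four-term triangle inequality then yields $|g(y) - g(z)| \leq 4/(5n) < 1/n$ for all $y, z \in V$. Hence $\omega_g(y) \leq 4/(5n) < 1/n$ throughout $V$, so $V \cap D_n = \varnothing$, which proves that $D_n$ is nowhere dense.

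The only delicate aspect I anticipate is the arithmetic bookkeeping: the tolerance $1/(5n)$ (or any fixed positive fraction strictly smaller than $1/(4n)$) must be chosen in advance so that after the four-term triangle inequality the estimate still falls strictly below $1/n$. Beyond that, the argument is simply a packaging of pointwise convergence of the $g_k$, the Baire category theorem, and continuity of the approximants, and delivers the desired meager $F_\sigma$ representation of the discontinuity set.
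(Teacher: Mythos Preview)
The paper does not supply its own proof of this proposition; it simply cites Kechris. Your oscillation-plus-Baire-category argument is the classical one and matches what one finds in standard references, and the bookkeeping with the tolerance $1/(5n)$ is correct.

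You have already identified the one subtlety: your appeal to the Baire category theorem on the open set $U$ requires $X$ to be a Baire space, so as written the argument establishes the proposition only for completely metrizable (or more generally Baire) $X$, which indeed covers the paper's applications to $[0,1]$ and $\Sigma$. For the record, the proposition as stated---for an arbitrary metrizable $X$---is still true, but your intermediate claim that each $D_n$ is nowhere dense can genuinely fail in that generality (on $X=\mathbb{Q}$ one can produce a Baire-one $g$ with $\omega_g\equiv 2$, so that $D_1=\mathbb{Q}$). The fix, which avoids the Baire category theorem entirely, is to note that with $F_{m,\epsilon}=\{x:|g_p(x)-g_q(x)|\le\epsilon\text{ for all }p,q\ge m\}$ one has $X\setminus\bigcup_m\operatorname{int}(F_{m,\epsilon})\subseteq\bigcup_m\partial F_{m,\epsilon}$, and the discontinuity set of $g$ is then contained in the countable union $\bigcup_{k,m}\partial F_{m,1/k}$ of nowhere dense boundaries of closed sets. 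This is a refinement rather than a correction: for the paper's purposes your argument is entirely sufficient.
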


Note that Proposition \ref{not first Baire proposition} in particular tells us that any nowhere continuous function on a complete metric space $X$ is not of Baire class one. In fact, since $X$ is nonmeager in itself by the Baire category theorem, $X$ cannot be the set of points of discontinuity of any function of Baire class one.

We state one consequence of the Cantor-Bendixson theorem on Polish spaces.

\begin{proposition} [See {\cite[Theorem 3.11 and Corollary 6.5]{Kec95}}] \label{cardinality of uncoutable Polish}
Let $X$ be a Polish space, i.e., $X$ is separable and completely metrizable. If $A$ is an uncountable $G_\delta$ subset of $X$, then $A$ has cardinality $\mathfrak{c}$.
\end{proposition}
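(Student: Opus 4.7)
The plan is to reduce the statement to the classical Cantor-Bendixson theorem applied to $A$ itself, which requires first upgrading $A$ from a mere $G_\delta$ subset of $X$ to a Polish space in its own right.

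First I would invoke Alexandrov's theorem: every $G_\delta$ subset of a Polish space, equipped with the subspace topology, is itself Polish. Concretely, one may replace the inherited metric by a topologically equivalent complete metric built from the original complete metric on $X$ together with the defining sequence of open sets whose intersection is $A$. With this, $A$ becomes a Polish space, and the ambient inclusion $A \hookrightarrow X$ plays no further role.

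Next I would apply the Cantor-Bendixson theorem to the Polish space $A$: it decomposes as $A = P \sqcup C$, where $P$ is a (possibly empty) perfect closed subset of $A$ and $C$ is countable. Since $A$ is assumed to be uncountable, $P$ must be non-empty. The standard construction of a Cantor scheme inside a non-empty perfect Polish space then yields an embedding of the Cantor space $2^{\mathbb{N}}$ into $P$; this is done by recursively choosing, at each finite binary string $s$, two disjoint non-empty open sets $U_{s0}, U_{s1} \subseteq U_s$ of diameter less than $2^{-|s|}$ meeting $P$, which is possible precisely because $P$ has no isolated points. Intersecting the resulting nested sequences over each infinite branch produces $2^{\aleph_0} = \mathfrak{c}$ distinct points of $P \subseteq A$, so $|A| \geq \mathfrak{c}$.

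For the reverse inequality, $A \subseteq X$ and $X$ is separable metric, so $|X| \leq \mathfrak{c}$ (any separable metric space embeds into $\mathbb{R}^{\mathbb{N}}$, or more directly has a countable dense set whose closure under Cauchy limits has cardinality at most $\mathfrak{c}$); hence $|A| \leq \mathfrak{c}$, giving $|A| = \mathfrak{c}$. The one step I would expect to spell out with the most care is the passage from $A$ being $G_\delta$ in $X$ to $A$ being completely metrizable, since this is the conceptual heart of why the proposition holds at all; the Cantor-scheme construction inside a non-empty perfect Polish space is standard and goes through without surprises.
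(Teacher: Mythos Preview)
Your proposal is correct. The paper does not give its own proof of this proposition; it simply cites \cite[Theorem 3.11 and Corollary 6.5]{Kec95}. Your argument is precisely the one those two references encode: Theorem~3.11 in Kechris is Alexandrov's theorem (a $G_\delta$ subset of a Polish space is Polish), and Corollary~6.5 is the statement that every uncountable Polish space has cardinality $\mathfrak{c}$, proved via Cantor--Bendixson plus the Cantor-scheme embedding of $2^{\mathbb{N}}$ into a non-empty perfect Polish space. So your write-up unpacks exactly the route the citation points to, and nothing more needs to be said.
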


The following topological facts are elementary and well-known. For the convenience of the readers, we provide proofs of these facts.

\begin{fact} \label{basic facts}
Suppose that $A$ is dense $G_\delta$ in a complete metric space $X$. Then, the following hold:
\begin{enumerate} [label=\upshape(\roman*), ref=\roman*, leftmargin=*, widest=ii]
\item \label{basic facts 1}
$A$ is comeager and nonmeager in $X$.
\item \label{basic facts 2}
If $X$ does not contain any isolated points, then $A$ is uncountable.
\end{enumerate}
\end{fact}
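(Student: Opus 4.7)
The plan is to exploit the standard interaction between dense $G_\delta$ sets and the Baire category theorem. Write $A = \bigcap_{n \in \mathbb{N}} U_n$ with each $U_n$ open in $X$. Because $A \subseteq U_n$ and $A$ is dense, each $U_n$ is dense in $X$, so $F_n \coloneqq X \setminus U_n$ is closed with empty interior, i.e., nowhere dense. Since $X \setminus A = \bigcup_{n \in \mathbb{N}} F_n$, we conclude that $X \setminus A$ is meager in $X$, so $A$ is comeager in $X$.

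For the nonmeagerness in (\ref{basic facts 1}), I would invoke the Baire category theorem: since $X$ is a complete metric space, $X$ is nonmeager in itself. If $A$ were also meager, then $X = A \cup (X \setminus A)$ would be a countable union of nowhere dense sets, contradicting the Baire category theorem. Hence $A$ is nonmeager.

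For part (\ref{basic facts 2}), I would observe that the hypothesis ``$X$ has no isolated points'' means that for every $x \in X$, the singleton $\{ x \}$ has empty interior, and since $\{ x \}$ is closed in the metric space $X$, the singleton $\{ x \}$ is nowhere dense. If $A$ were countable, then $A = \bigcup_{x \in A} \{ x \}$ would express $A$ as a countable union of nowhere dense sets, making $A$ meager in $X$ and contradicting the nonmeagerness established in (\ref{basic facts 1}). Thus $A$ is uncountable.

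I do not anticipate a genuine obstacle here, since both parts are direct applications of the Baire category theorem; the main care is simply in verifying that the relevant sets are nowhere dense (using density of the $U_n$ for part (\ref{basic facts 1}) and the absence of isolated points for part (\ref{basic facts 2})).
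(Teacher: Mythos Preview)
Your proof is correct and follows essentially the same approach as the paper: both write $A = \bigcap_n U_n$ with each $U_n$ dense open, deduce comeagerness and nonmeagerness via the Baire category theorem for part (\ref{basic facts 1}), and for part (\ref{basic facts 2}) use that singletons are nowhere dense in the absence of isolated points. The only cosmetic difference is that in part (\ref{basic facts 2}) the paper spells out the contradiction with the Baire category theorem directly (intersecting the $U_n$'s with the dense open complements $V_n = X \setminus \{a_n\}$ to get an empty ``dense'' set), whereas you more cleanly invoke the nonmeagerness already established in part (\ref{basic facts 1}); the underlying argument is the same.
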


\begin{proof}
For both parts, since $A$ is dense $G_\delta$ in $X$, we may write $A = \bigcap_{n \in \mathbb{N}} U_n$, where each $U_n$ is dense open in $X$.

(\ref{basic facts 1})
Since each $X \setminus U_n$ is nowhere dense in $X$, we know that $X \setminus A = \bigcup_{n \in \mathbb{N}} (X \setminus U_n)$ is meager in $X$. Hence, $A$ is comeager in $X$. Now, if $A$ were meager in $X$, then $X = A \cup (X \setminus A)$ would be meager in $X$ as well, which contradicts the Baire category theorem stating that any complete metric space is nonmeager in itself. Thus, $A$ is nonmeager in $X$.

(\ref{basic facts 2})
Suppose that $X$ has no isolated points. Assume to the contrary that $A$ is countable, and put $A = \bigcup_{n \in \mathbb{N}} \{ a_n \}$. Since each $a_n$ is not an isolated point of $X$, we know that each $V_n \coloneqq X \setminus \{ a_n \}$ is dense open in $X$. Then, $\bigcap_{n \in \mathbb{N}} V_n \cap \bigcap_{n \in \mathbb{N}} U_n = (X \setminus A) \cap A = \varnothing$, which contradicts the Baire category theorem stating that in any complete metric space, a countable intersection of dense open sets is dense, and, in particular, non-empty.
\end{proof}

\section{Auxiliary results} \label{Auxiliary results}

We begin this section with some general results which are independent of Pierce expansions.

\begin{lemma} \label{at most one G delta level set}
Let $X$ be a complete metric space. Suppose that $X = \bigcup_{\alpha \in \Lambda} X_\alpha$ for some index set $\Lambda$, where $X_\alpha \cap X_\beta = \varnothing$ whenever $\alpha, \beta \in \Lambda$ with $\alpha \neq \beta$. Then, at most one element in the collection $\{ X_\alpha : \alpha \in\Lambda \}$ is dense $G_\delta$ in $X$.
\end{lemma}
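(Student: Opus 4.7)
The plan is to argue by contradiction using the Baire category theorem. Suppose, for the sake of contradiction, that there exist two distinct indices $\alpha, \beta \in \Lambda$ such that both $X_\alpha$ and $X_\beta$ are dense $G_\delta$ in $X$. The goal is to show that this forces $X_\alpha \cap X_\beta$ to be non-empty, contradicting the assumed pairwise disjointness of the $X_\gamma$'s.

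First I would write $X_\alpha = \bigcap_{n \in \mathbb{N}} U_n$ and $X_\beta = \bigcap_{n \in \mathbb{N}} V_n$, where each $U_n$ and $V_n$ is open in $X$. Since $X_\alpha$ is dense in $X$, each $U_n$ contains the dense set $X_\alpha$ and hence is itself dense open in $X$; analogously, each $V_n$ is dense open in $X$. Consequently, the family $\{ U_n \}_{n \in \mathbb{N}} \cup \{ V_n \}_{n \in \mathbb{N}}$ is a countable collection of dense open subsets of the complete metric space $X$. The Baire category theorem then yields that
\[
X_\alpha \cap X_\beta \;=\; \bigcap_{n \in \mathbb{N}} U_n \cap \bigcap_{n \in \mathbb{N}} V_n
\]
is dense, and in particular non-empty, in $X$, contradicting $X_\alpha \cap X_\beta = \varnothing$.

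There is really no hard step here: the argument is a one-line application of Baire's theorem once the definition of dense $G_\delta$ is unpacked. A slight alternative, which I would only mention as a remark if at all, would be to invoke Fact \ref{basic facts}(\ref{basic facts 1}): if $X_\beta$ is dense $G_\delta$ then $X_\beta$ is comeager, so $X \setminus X_\beta$ is meager; since $X_\alpha \subseteq X \setminus X_\beta$, the set $X_\alpha$ would be meager, while Fact \ref{basic facts}(\ref{basic facts 1}) simultaneously forces $X_\alpha$ to be nonmeager, a contradiction. However, the direct Baire-category argument above is shorter and more transparent, so I would present that as the proof.
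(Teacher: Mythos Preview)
Your proof is correct and is essentially identical to the paper's own argument: assume two distinct $X_\alpha$, $X_\beta$ are dense $G_\delta$, write each as a countable intersection of dense open sets, and apply the Baire category theorem to conclude $X_\alpha \cap X_\beta \neq \varnothing$. The only cosmetic difference is that you explicitly justify why the $U_n$, $V_n$ are dense (as supersets of dense sets), whereas the paper simply asserts it.
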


\begin{proof}
Suppose to the contrary that there exist two distinct dense $G_\delta$ sets $X_\alpha$ and $X_\beta$ in the collection $\{ X_\alpha : \alpha \in \Lambda \}$. Write $X_\alpha = \bigcap_{n \in \mathbb{N}} U_n$ and $X_\beta = \bigcap_{n \in \mathbb{N}} V_n$, where $U_n$ and $V_n$ are dense open in $X$ for all $n \in \mathbb{N}$. Then, by the Baire category theorem, $X_\alpha \cap X_\beta = \bigcap_{n \in \mathbb{N}} (U_n \cap V_n)$, as a countable intersection of dense open sets, is dense in $X$, and, in particular, non-empty. This is a contradiction, and thus, the lemma is proved.
\end{proof}

\begin{lemma} \label{superlevel set of limsup of lsc is G delta}
Let $(g_n)_{n \in \mathbb{N}}$ be a sequence of $[-\infty,\infty]$-valued lower semicontinuous functions on a topological space $X$. Put $g(x) \coloneqq \limsup_{n \to \infty} g_n(x)$ for each $x \in X$. Then, the superlevel set $\{ x \in X : g(x) \geq \alpha \}$ is $G_\delta$ in $X$ for each $\alpha \in \mathbb{R}$.
\end{lemma}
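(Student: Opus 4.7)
The plan is to unwind the definition of $\limsup$ and exhibit the superlevel set as a countable intersection of open sets. The key identity is
\[
g(x) = \inf_{k \in \mathbb{N}} \sup_{n \geq k} g_n(x),
\]
so $g(x) \geq \alpha$ holds if and only if $\sup_{n \geq k} g_n(x) \geq \alpha$ for every $k \in \mathbb{N}$.

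Accordingly, the first step is to write
\[
\{ x \in X : g(x) \geq \alpha \} = \bigcap_{k \in \mathbb{N}} \left\{ x \in X : \sup_{n \geq k} g_n(x) \geq \alpha \right\}.
\]
Because $\alpha$ is a real number, the non-strict inequality $\sup_{n \geq k} g_n(x) \geq \alpha$ is equivalent to the countable family of strict inequalities: for every $m \in \mathbb{N}$, there exists $n \geq k$ such that $g_n(x) > \alpha - 1/m$. (Indeed, $\sup_{n \geq k} g_n(x) \geq \alpha$ implies $\sup_{n \geq k} g_n(x) > \alpha - 1/m$ and so some term must exceed $\alpha - 1/m$; conversely, if such terms exist for every $m$, then $\sup_{n \geq k} g_n(x) \geq \alpha - 1/m$ for all $m$, hence $\geq \alpha$.) This yields
\[
\left\{ x \in X : \sup_{n \geq k} g_n(x) \geq \alpha \right\} = \bigcap_{m \in \mathbb{N}} \bigcup_{n \geq k} \left\{ x \in X : g_n(x) > \alpha - \tfrac{1}{m} \right\},
\]
and combining the two displays gives
\[
\{ x \in X : g(x) \geq \alpha \} = \bigcap_{k \in \mathbb{N}} \bigcap_{m \in \mathbb{N}} \bigcup_{n \geq k} \left\{ x \in X : g_n(x) > \alpha - \tfrac{1}{m} \right\}.
\]

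The last step is to invoke lower semicontinuity: since each $g_n$ is $[-\infty,\infty]$-valued lower semicontinuous and $\alpha - 1/m \in \mathbb{R}$, the set $\{ x \in X : g_n(x) > \alpha - 1/m \}$ is open in $X$, and so is the inner union over $n \geq k$. The entire expression is thus a countable intersection of open sets, showing that $\{ x \in X : g(x) \geq \alpha \}$ is $G_\delta$ in $X$. There is no genuine obstacle in the argument; the only mildly delicate point is replacing the ``$\geq$'' defining the superlevel set of $g$ by a countable family of strict inequalities, which is exactly why the auxiliary index $m$ is introduced.
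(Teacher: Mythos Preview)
Your proof is correct and follows essentially the same approach as the paper: unwind $\limsup$ as $\inf_k \sup_{n\ge k}$, turn the non-strict inequality $\sup_{n\ge k} g_n(x)\ge\alpha$ into a countable intersection of strict inequalities via $\alpha-1/m$, and then use lower semicontinuity to see that each $\{g_n>\alpha-1/m\}$ is open. The only difference is notational (your indices $k,m,n$ versus the paper's $n,j,k$) and that you spell out the equivalence for the $\ge$-to-$>$ step a bit more explicitly.
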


\begin{proof}
Let $\alpha \in \mathbb{R}$. Write
\begin{align*}
\{ x \in X : g(x) \geq \alpha \}
&= \left\{ x \in X : \inf_{n \in \mathbb{N}} \left( \sup_{k \geq n} g_k(x) \right) \geq \alpha \right\} \\
&= \bigcap_{n \in \mathbb{N}} \left\{ x \in X : \sup_{k \geq n} g_k(x) \geq \alpha \right\} \\
&= \bigcap_{n \in \mathbb{N}} \bigcap_{j \in \mathbb{N}} \left\{ x \in X : \sup_{k \geq n} g_k(x) > \alpha - \frac{1}{j} \right\} \\
&= \bigcap_{n \in \mathbb{N}} \bigcap_{j \in \mathbb{N}} \bigcup_{k \geq n} \left\{ x \in X : g_k(x) > \alpha - \frac{1}{j} \right\}.
\end{align*}
Here, each $\{ x \in X : g_k(x) > \alpha-1/j \}$ is open in $X$ by the lower semicontinuity of $g_k$. Hence the lemma.
\end{proof}

\begin{lemma} \label{double limit Baire class two lemma}
Let $g \colon X \to \mathbb{R}$ be a function on a metrizable space $X$. Suppose that there exists a double sequence of $[-\infty,\infty]$-valued continuous functions $(g_{m,n})_{(m,n) \in \mathbb{N}^2}$ on $X$ such that $g(x) = \lim_{n \to \infty} \left( \lim_{m \to \infty} g_{m,n}(x) \right)$ for every $x \in X$. Then, $g$ is of Baire class two on $X$.
\end{lemma}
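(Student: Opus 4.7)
The plan is to verify the definition of Baire class two directly by constructing, for each $n \in \mathbb{N}$, a Baire class one function $\tilde{h}_n \colon X \to \mathbb{R}$ such that $\tilde{h}_n \to g$ pointwise on $X$. The main obstacle is that the given $g_{m,n}$ and the inner limits $h_n(x) := \lim_{m \to \infty} g_{m,n}(x)$ are a priori $[-\infty,\infty]$-valued, whereas the definition of Baire class one requires real-valued continuous approximants. To circumvent this, I would introduce the truncation map $T_n \colon [-\infty,\infty] \to [-n,n]$ defined by $T_n(y) := \max(-n, \min(n, y))$, which is continuous on $[-\infty,\infty]$ equipped with its usual compact (order) topology and takes values in $[-n,n] \subset \mathbb{R}$.

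Next I would set $\tilde{g}_{m,n} := T_n \circ g_{m,n}$, a real-valued continuous function on $X$ for each $(m,n) \in \mathbb{N}^2$. For each fixed $n$, the continuity of $T_n$ together with the convergence $g_{m,n}(x) \to h_n(x)$ in $[-\infty,\infty]$ as $m \to \infty$ would give $\tilde{g}_{m,n}(x) \to T_n(h_n(x))$ for every $x \in X$. Defining $\tilde{h}_n(x) := T_n(h_n(x))$, which lies in $[-n,n] \subset \mathbb{R}$, I would conclude that $\tilde{h}_n$ is the pointwise limit of real-valued continuous functions on $X$ and is therefore of Baire class one on $X$.

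Finally, I would verify that $\tilde{h}_n \to g$ pointwise on $X$. Fix $x \in X$. Since $g(x) \in \mathbb{R}$ and $h_n(x) \to g(x)$ as $n \to \infty$, for all sufficiently large $n$ we have $|h_n(x)| \leq |g(x)| + 1 < n$, so $h_n(x) \in (-n,n)$ and hence $\tilde{h}_n(x) = T_n(h_n(x)) = h_n(x)$. This yields $\lim_{n \to \infty} \tilde{h}_n(x) = \lim_{n \to \infty} h_n(x) = g(x)$. Consequently, $g$ is the pointwise limit of the Baire class one functions $(\tilde{h}_n)_{n \in \mathbb{N}}$, so $g$ is of Baire class two on $X$, as desired. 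The hardest step is really just the bookkeeping with the two different truncation roles: using $T_n$ (indexed by the outer parameter) to produce real-valued continuous $\tilde{g}_{m,n}$ whose inner limits are automatically bounded, while still matching $h_n(x)$ asymptotically as $n \to \infty$.
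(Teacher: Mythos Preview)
Your proof is correct and follows essentially the same approach as the paper: both truncate $g_{m,n}$ to $[-n,n]$ (the paper writes out the cases explicitly where you use the continuous map $T_n$), observe that the inner limits of the truncated functions are the truncations $T_n(h_n(x))$ and hence real-valued Baire class one, and then use the fact that $g$ is real-valued to conclude that eventually $T_n(h_n(x)) = h_n(x)$. Your justification of the last step (that $|h_n(x)| < n$ for large $n$) is in fact a bit more explicit than the paper's.
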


\begin{proof}
For each $(m, n) \in \mathbb{N}^2$ and $x \in X$, define
\[
\widetilde{g}_{m,n}(x) \coloneqq 
\begin{cases}
n, &\text{if } g_{m,n}(x) \in (n, \infty]; \\
g_{m,n}(x), &\text{if } g_{m,n}(x) \in [-n,n]; \\
-n, &\text{if } g_{m,n}(x) \in [-\infty, -n).
\end{cases}
\]
Clearly, $\widetilde{g}_{m,n}$ is real-valued and continuous on $X$ for each $(m,n) \in \mathbb{N}^2$. For each $n \in \mathbb{N}$ and $x \in X$, put $g_n(x) \coloneqq \lim_{m \to \infty} g_{m,n}(x)$. Given $n \in \mathbb{N}$, we have the pointwise convergence
\[
\widetilde{g}_{m,n}(x) \to \widetilde{g}_n(x) \coloneqq  
\begin{cases}
n, &\text{if } g_n(x) \in (n, \infty]; \\
g_n (x), &\text{if } g_n(x) \in [-n,n]; \\
-n, &\text{if } g_n(x) \in [-\infty, -n),
\end{cases}
\]
as $m \to \infty$ for any $x \in X$. Note that $\widetilde{g}_n$ is real-valued, and hence $\widetilde{g}_n$ is of Baire class one on $X$, for each $n \in \mathbb{N}$. Finally, since $g(x) = \lim_{n \to \infty} g_n(x) = \lim_{n \to \infty} \widetilde{g}_n(x)$ for every $x \in X$ and since $g$ is real-valued by the hypothesis, we conclude that $g$ is of Baire class two on $X$.
\end{proof}

\begin{lemma} \label{main lemma}
Let $X$ and $Y$ be compact metrizable spaces and $h \colon X \to Y$ a continuous surjection. Suppose that $(g_n)_{n \in \mathbb{N}}$ is a sequence of $[-\infty,\infty]$-valued lower semicontinuous functions on $X$. Then, the function $\Phi$ on $Y$ defined by
\[
\Phi (y) \coloneqq \inf_{x \in h^{-1}(\{ y \})} \left( \limsup_{n \to \infty} g_n (x) \right)
\]
for each $y \in Y$, is the double pointwise limit of some double sequence of real-valued continuous functions on $Y$. In particular, if $\Phi$ is real-valued, then $\Phi$ is of Baire class two on $Y$.
\end{lemma}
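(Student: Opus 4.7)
The plan is to convert the outer $\limsup$ in the definition of $\Phi$ into an infimum of lower semicontinuous functions, then swap the two infima, and finally apply Propositions \ref{infimum of lsc is lsc proposition} and \ref{lower semicontinuous function proposition} to descend from lower semicontinuous functions on $Y$ to real-valued continuous ones.

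Concretely, I would set $G_n(x) \coloneqq \sup_{k \geq n} g_k(x)$ for each $n \in \mathbb{N}$ and $x \in X$. Since a pointwise supremum of lower semicontinuous functions is lower semicontinuous, each $G_n$ is a $[-\infty,\infty]$-valued lower semicontinuous function on $X$, and the sequence $(G_n)_{n \in \mathbb{N}}$ is pointwise non-increasing in $n$, so that $\limsup_{n \to \infty} g_n(x) = \inf_{n \in \mathbb{N}} G_n(x)$ for every $x \in X$. Interchanging the two infima (which is justified because one is infimizing a single extended-real valued expression over a product indexing set), I obtain
\[
\Phi(y) = \inf_{n \in \mathbb{N}} \Phi_n(y), \qquad \text{where} \qquad \Phi_n(y) \coloneqq \inf_{x \in h^{-1}(\{ y \})} G_n(x).
\]
Proposition \ref{infimum of lsc is lsc proposition} then ensures that each $\Phi_n \colon Y \to [-\infty,\infty]$ is lower semicontinuous, and the monotonicity inherited from the $G_n$ converts this infimum into a pointwise limit, giving $\Phi(y) = \lim_{n \to \infty} \Phi_n(y)$ on $Y$.

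To produce the double sequence, I would invoke Proposition \ref{lower semicontinuous function proposition} on each $\Phi_n$ to extract real-valued continuous functions $(\psi_{m,n})_{m \in \mathbb{N}}$ on $Y$ with $\psi_{m,n} \to \Phi_n$ pointwise as $m \to \infty$. The double sequence $(\psi_{m,n})_{(m,n) \in \mathbb{N}^2}$ then satisfies $\Phi(y) = \lim_{n \to \infty} \lim_{m \to \infty} \psi_{m,n}(y)$ for every $y \in Y$, which is the first assertion. When $\Phi$ is additionally real-valued, applying Lemma \ref{double limit Baire class two lemma} to this double sequence (the $\psi_{m,n}$ being real-valued is a special case of $[-\infty,\infty]$-valued continuous) immediately yields that $\Phi$ is of Baire class two on $Y$. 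There is no genuinely hard step here; the only care required is bookkeeping in the extended reals, namely ensuring that $G_n$ and $\Phi_n$ are allowed to take the value $+\infty$, which is accommodated by the hypotheses of Propositions \ref{infimum of lsc is lsc proposition} and \ref{lower semicontinuous function proposition}, and checking that the swap of infima is legitimate, which is a trivial consequence of the product-set representation.
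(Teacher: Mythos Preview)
Your proposal is correct and follows essentially the same approach as the paper: define $G_n = \sup_{k \geq n} g_k$, use monotonicity and the interchange of infima to write $\Phi = \lim_n \Phi_n$ with $\Phi_n(y) = \inf_{x \in h^{-1}(\{y\})} G_n(x)$, then apply Propositions~\ref{infimum of lsc is lsc proposition} and~\ref{lower semicontinuous function proposition} followed by Lemma~\ref{double limit Baire class two lemma}. The only cosmetic difference is that the paper writes out the chain of equalities for the infimum swap explicitly, whereas you invoke the product-set representation; both are valid.
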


\begin{proof}
For each $m \in \mathbb{N}$, define a map $\Phi_m$ on $Y$ by
\[
\Phi_m (y) \coloneqq \inf_{x \in h^{-1}(\{y\})} G_m(x)
\]
for each $y \in Y$, where the map $G_m$ on $X$ is given by
\[
G_m (x) \coloneqq \sup_{n \geq m} g_n(x)
\]
for each $x \in X$. By noting that $(G_m)_{m \in \mathbb{N}}$ is a decreasing sequence of functions, we find that
\begin{align*}
\Phi (y)
&= \inf_{x \in h^{-1}(\{ y \})} \left( \lim_{m \to \infty} G_m(x) \right) 
= \inf_{x \in h^{-1}(\{ y \})} \left( \inf_{m \in \mathbb{N}} G_m (x) \right) \\
&= \inf_{m \in \mathbb{N}} \left( \inf_{x \in h^{-1}(\{y\})} G_m(x) \right) 
= \lim_{m \to \infty} \left( \inf_{x \in h^{-1}(\{y\})} G_m(x) \right)
= \lim_{m \to \infty} \Phi_m (y),
\end{align*}
for each $y \in Y$. Hence, it suffices to show that each $\Phi_m$ is the pointwise limit of some sequence of real-valued continuous functions on $Y$. Let $m \in \mathbb{N}$. By writing 
\[
\{ x \in X : G_m (x) > \alpha \} = \bigcup_{n \geq m} \{ x \in X : g_n(x) > \alpha \}
\]
for any $\alpha \in \mathbb{R}$, we deduce that the map $G_m \colon X \to [-\infty, \infty]$ is lower semicontinuous, since each $\{ x \in X : g_n(x) > \alpha \}$ is open in $X$ by the lower semicontinuity of $g_n$. But then, $\Phi_m \colon Y \to [-\infty, \infty]$ is lower semicontinuous by Proposition \ref{infimum of lsc is lsc proposition}. Therefore, Baire's theorem on lower semicontinuous functions (Proposition \ref{lower semicontinuous function proposition}) tells us that $\Phi_m$ is the pointwise limit of some sequence of real-valued continuous functions on $Y$. This proves the first part of the lemma.

Now, assume that $\Phi$ is real-valued. By the preceding paragraph, we can find a double sequence of real-valued continuous functions $(\phi_{m,n})_{(m,n) \in \mathbb{N}^2}$ on $Y$ such that $\Phi (y) = \lim_{n \to \infty} \left( \lim_{m \to \infty} \phi_{m,n}(y) \right)$ for each $y \in Y$. Consequently, Lemma \ref{double limit Baire class two lemma} tells us that $\Phi$ is of Baire class two on $Y$.
\end{proof}

Now, we examine the convergence exponent of Pierce sequences $\lambda \colon \Sigma \to [0,\infty]$ which is defined as
\begin{align} \label{definition of lambda}
\lambda (\sigma) \coloneqq \inf \left\{ s > 0 : \sum_{n \in \mathbb{N}} \frac{1}{\sigma_n^s} \text{ converges} \right\}
\end{align}
for each $\sigma \coloneqq (\sigma_n)_{n \in \mathbb{N}}$ in $\Sigma$. Then, in view of \eqref{definition of lambda star}, we have that
\begin{align} \label{lambda star as composition}
(\lambda \circ f) (x) = \lambda ((d_n(x))_{n \in \mathbb{N}}) = \lambda^*(x)
\end{align}
for each $x \in [0,1]$, i.e., the following diagram commutes:
\begin{center}
\begin{tikzcd}[column sep=small]
{[0,1]} \arrow{rr}{\lambda^*} \arrow[swap, shift right=.75ex]{dr}{f}& &{[0, \infty]} \\
& \Sigma \arrow[swap]{ur}{\lambda}  
\end{tikzcd}
\end{center}

For each $n \in \mathbb{N} \setminus \{ 1 \}$, define a function $\psi_n \colon \Sigma \to [0,1]$ by
\begin{align} \label{definition of psi n}
\psi_n (\sigma) \coloneqq \frac{\log n}{\log {\sigma_n}}
\end{align}
for each $\sigma \coloneqq (\sigma_k)_{k \in \mathbb{N}}$ in $\Sigma$. By the convention $c / \infty = 0$ for any $c \in \mathbb{R}$ and the fact that $\sigma_k > 1$ for all $k \in \mathbb{N} \setminus \{ 1 \}$ for any $\sigma \coloneqq (\sigma_k)_{k \in \mathbb{N}} \in \Sigma$, it is clear that $\psi_n$ is well-defined. Furthermore, the continuity of $\psi_n$ is immediate from the definition.

In {\cite[pp.~25--26]{PS98}}, the authors provided a simple formula for the convergence exponent as follows. If $\sigma \coloneqq (\sigma_n)_{n \in \mathbb{N}}$ is a sequence of non-decreasing positive real numbers such that $\sigma_n \to \infty$ as $n \to \infty$, then $\lambda (\sigma) = \limsup_{n \to \infty} (\log n/\log \sigma_n)$. In fact, the same formula is still valid on $\Sigma$.

\begin{lemma} \label{PS formula on Sigma lemma}
For any $\sigma \in \Sigma$, we have
\begin{align} \label{PS formula on Sigma}
\lambda (\sigma) = \limsup_{n \to \infty} \psi_n(\sigma).
\end{align}
\end{lemma}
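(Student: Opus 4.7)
The plan is to split $\Sigma$ according to the decomposition given in its definition: the finitely-supported Pierce sequences in $\Sigma_0 \cup \bigcup_{n \in \mathbb{N}} \Sigma_n$, where $\sigma_k = \infty$ for all sufficiently large $k$, and the infinite Pierce sequences in $\Sigma_\infty$, where $\sigma$ is a strictly increasing sequence of positive integers. These two structural features completely dictate the behaviour of both sides of \eqref{PS formula on Sigma}.

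First I would handle the \emph{terminating} case $\sigma \in \Sigma_0 \cup \bigcup_{n \in \mathbb{N}} \Sigma_n$. Here $\sigma_k = \infty$ for all $k > n$, with $n = 0$ covering $\Sigma_0$. Using the stipulated conventions $1/\infty^s = 0$ for $s > 0$ and $c/\infty = 0$ for $c \in \mathbb{R}$, the series $\sum_{k \in \mathbb{N}} \sigma_k^{-s}$ is really a finite sum and therefore converges for every $s > 0$, yielding $\lambda(\sigma) = \inf (0, \infty) = 0$. Simultaneously, $\psi_k(\sigma) = \log k / \log \infty = 0$ whenever $k > \max(n,1)$, so $\limsup_{k \to \infty} \psi_k(\sigma) = 0$. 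Both sides of \eqref{PS formula on Sigma} vanish.

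Next I would address $\sigma \in \Sigma_\infty$. Since such a $\sigma = (\sigma_k)_{k \in \mathbb{N}}$ is a strictly increasing sequence of positive integers, it is in particular a non-decreasing sequence of positive real numbers with $\sigma_k \to \infty$ as $k \to \infty$. Hence $\sigma$ satisfies exactly the hypothesis of the classical P\'olya--Szeg\H{o} formula recalled in the paragraph preceding the lemma, which gives $\lambda(\sigma) = \limsup_{k \to \infty} (\log k/\log \sigma_k) = \limsup_{k \to \infty} \psi_k(\sigma)$. Combining the two cases and recalling $\Sigma = \Sigma_0 \cup \bigcup_{n \in \mathbb{N}} \Sigma_n \cup \Sigma_\infty$ verifies \eqref{PS formula on Sigma}.

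There is essentially no obstacle to overcome: the only conceptual check is that the P\'olya--Szeg\H{o} formula applies to strictly increasing integer sequences (it does, since such sequences are automatically non-decreasing, positive, and tend to infinity), and that the conventions $1/\infty^s = 0$ and $c/\infty = 0$ force both sides of \eqref{PS formula on Sigma} to vanish on the finitely-supported part of $\Sigma$.
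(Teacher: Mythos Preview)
Your proposal is correct and follows essentially the same approach as the paper: split $\Sigma$ into the terminating part $\Sigma_0 \cup \bigcup_{n \in \mathbb{N}} \Sigma_n$, where both sides of \eqref{PS formula on Sigma} vanish by the $\infty$-conventions, and the infinite part $\Sigma_\infty$, where the classical P\'olya--Szeg\H{o} formula applies directly. The only difference is the order in which the two cases are treated.
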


\begin{proof}
Note first that any $\sigma \coloneqq (\sigma_n)_{n \in \mathbb{N}}$ in $\Sigma_\infty$ is a non-decreasing positive real-valued sequence satisfying $\sigma_n \to \infty$ as $n \to \infty$. Hence, by {\cite[pp.~25--26]{PS98}} stated above the lemma, the formula \eqref{PS formula on Sigma} holds on $\Sigma_\infty$. 

Now, let $\sigma \coloneqq (\sigma_k)_{k \in \mathbb{N}} \in \Sigma_M$ for some $M \in \mathbb{N}_0$. Then, by definition, $\sigma_k = \infty$ for all $k \geq M+1$. It follows that $\sum_{k \in \mathbb{N}} (1/\sigma_k^s) = \sum_{k=1}^{M} (1/\sigma_k^s)$ is finite for any $s > 0$, which implies $\lambda (\sigma) = 0$ by the definition \eqref{definition of lambda}. On the other hand, it is clear that $\limsup_{k \to \infty} \psi_k (\sigma) = \limsup_{k \to \infty} (\log k / \log \sigma_k) = 0$ due to the convention $c/\infty = 0$ for any $c \in \mathbb{R}$. This completes the proof.
\end{proof}

\begin{lemma} \label{basic results on lambda}
For $\lambda \colon \Sigma \to [0, \infty]$, the following hold:
\begin{enumerate}[label=\upshape(\roman*), ref=\roman*, leftmargin=*, widest=ii]
\item \label{basic results on lambda 1}
For any $\sigma \in \bigcup_{n \in \mathbb{N}_0} \Sigma_n$, we have $\lambda (\sigma) = 0$.
\item \label{basic results on lambda 2}
$\lambda (\Sigma) \subseteq [0,1]$.
\end{enumerate}
\end{lemma}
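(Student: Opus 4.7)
The plan is to handle the two parts separately, exploiting the formula established in Lemma \ref{PS formula on Sigma lemma}.

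For part (\ref{basic results on lambda 1}), I would simply observe that if $\sigma \coloneqq (\sigma_k)_{k \in \mathbb{N}} \in \Sigma_n$ for some $n \in \mathbb{N}_0$, then by definition $\sigma_k = \infty$ for every $k > n$, and so, by the convention $c/\infty = 0$, the sum $\sum_{k \in \mathbb{N}} 1/\sigma_k^s$ collapses to the finite sum $\sum_{k=1}^{n} 1/\sigma_k^s$, which converges for every $s > 0$. Hence the infimum defining $\lambda(\sigma)$ is taken over all of $(0,\infty)$, giving $\lambda(\sigma) = 0$. (This is already contained in the second paragraph of the proof of Lemma \ref{PS formula on Sigma lemma}.)

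For part (\ref{basic results on lambda 2}), by part (\ref{basic results on lambda 1}) it remains to consider $\sigma \coloneqq (\sigma_k)_{k \in \mathbb{N}} \in \Sigma_\infty$. The key observation is that $\sigma$ is a strictly increasing sequence of positive integers, so a routine induction yields $\sigma_k \geq k$ for every $k \in \mathbb{N}$. Invoking Lemma \ref{PS formula on Sigma lemma}, together with the definition \eqref{definition of psi n} of $\psi_k$, one obtains
\[
\lambda(\sigma) = \limsup_{k \to \infty} \psi_k(\sigma) = \limsup_{k \to \infty} \frac{\log k}{\log \sigma_k} \leq \limsup_{k \to \infty} \frac{\log k}{\log k} = 1
\]
(the first equality using $\sigma_k \geq k \geq 2$ eventually, so that $\log \sigma_k > 0$). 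Combined with $\lambda(\sigma) \geq 0$ from the definition, this gives $\lambda(\sigma) \in [0,1]$, as required.

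No substantive obstacle is anticipated; the argument is a direct consequence of Lemma \ref{PS formula on Sigma lemma} and the combinatorial fact that a strictly increasing sequence of positive integers grows at least linearly. One could alternatively bypass Lemma \ref{PS formula on Sigma lemma} in part (\ref{basic results on lambda 2}) by noting that $\sigma_k \geq k$ implies $\sum_{k \in \mathbb{N}} 1/\sigma_k^s \leq \sum_{k \in \mathbb{N}} 1/k^s < \infty$ for all $s > 1$, which also yields $\lambda(\sigma) \leq 1$ directly from the definition \eqref{definition of lambda}.
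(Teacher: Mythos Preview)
Your proposal is correct and follows essentially the same route as the paper: for part~(\ref{basic results on lambda 1}) the paper simply refers back to the second paragraph of the proof of Lemma~\ref{PS formula on Sigma lemma} (exactly as you do), and for part~(\ref{basic results on lambda 2}) it invokes $\sigma_n \geq n$ together with the formula~\eqref{PS formula on Sigma}. Your write-up is slightly more detailed, and your alternative argument via comparison with $\sum 1/k^s$ is a valid bypass of Lemma~\ref{PS formula on Sigma lemma}, but the core idea is identical.
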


\begin{proof}
The first part is already shown in the proof of Lemma \ref{PS formula on Sigma lemma}. For the second part, recall that for any $(\sigma_n)_{n \in \mathbb{N}} \in \Sigma$, we have $\sigma_n \geq n$ for all $n \in \mathbb{N}$. Hence, the result follows from \eqref{PS formula on Sigma}.
\end{proof}

Due to Lemma \ref{basic results on lambda}(\ref{basic results on lambda 2}), we may consider $\lambda$ as a $[0,1]$-valued function.

\begin{theorem} \label{neighborhood image theorem}
For any non-empty open subset $U$ of $\Sigma$, we have $\lambda (U \cap \Sigma_\infty) = [0,1]$, and, consequently, $\lambda (U) = [0,1]$. In particular, $\lambda \colon \Sigma \to [0,1]$ is surjective.
\end{theorem}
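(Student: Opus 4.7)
The plan is to exploit Lemma \ref{PS formula on Sigma lemma}, which gives the Pólya–Szegő formula $\lambda(\sigma) = \limsup_{n\to\infty} \log n / \log \sigma_n$ on all of $\Sigma$. This reduces the problem to a concrete construction of strictly increasing integer sequences with prescribed limsup.

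First, I would reduce to a cylinder. Since $\Sigma_\infty$ is dense in $\Sigma$ by Proposition \ref{Sigma infty is dense G delta proposition}, $U \cap \Sigma_\infty$ is non-empty; pick any $\omega = (\omega_k)_{k \in \mathbb{N}} \in U \cap \Sigma_\infty$. In the product topology on $\mathbb{N}_\infty^{\mathbb{N}}$, basic open neighborhoods of $\omega$ consist of products that fix finitely many coordinates (each $\omega_k \in \mathbb{N}$, and $\{\omega_k\}$ is open in $\mathbb{N}_\infty$ since $\mathbb{N}$ is discrete). Hence for some $N \in \mathbb{N}$, the cylinder $\Upsilon_\tau$ is contained in $U$, where $\tau \coloneqq (\omega_1, \ldots, \omega_N, \infty, \infty, \ldots) \in \Sigma_N$.

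Second, for each $\alpha \in [0,1]$, I would construct explicitly a sequence $\sigma^{(\alpha)} \in \Upsilon_\tau \cap \Sigma_\infty$ with $\lambda(\sigma^{(\alpha)}) = \alpha$, setting $\sigma^{(\alpha)}_k = \omega_k$ for $1 \le k \le N$ and extending as follows. For $\alpha = 0$, put $\sigma^{(0)}_{N+j} \coloneqq \omega_N + 2^j$ for $j \ge 1$; this is strictly increasing, exceeds $\omega_N$, and satisfies $\log \sigma^{(0)}_{N+j} \sim j \log 2$, so that $\log n/\log \sigma^{(0)}_n \to 0$. For $\alpha \in (0,1]$, define recursively $\sigma^{(\alpha)}_{N+j} \coloneqq \max\bigl(\sigma^{(\alpha)}_{N+j-1} + 1,\ \omega_N + \lceil (N+j)^{1/\alpha}\rceil\bigr)$ for $j \ge 1$; strict monotonicity is built in, and for large $j$ the second argument dominates, so $\log \sigma^{(\alpha)}_n / \log n \to 1/\alpha$, whence $\limsup \log n/\log \sigma^{(\alpha)}_n = \alpha$. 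Applying Lemma \ref{PS formula on Sigma lemma} gives $\lambda(\sigma^{(\alpha)}) = \alpha$.

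Third, the containment $\{\lambda(\sigma^{(\alpha)}) : \alpha \in [0,1]\} = [0,1]$ yields $\lambda(U \cap \Sigma_\infty) \supseteq [0,1]$, and the reverse inclusion is immediate from Lemma \ref{basic results on lambda}(\ref{basic results on lambda 2}). Hence $\lambda(U \cap \Sigma_\infty) = [0,1]$, and a fortiori $\lambda(U) = [0,1]$. Surjectivity of $\lambda \colon \Sigma \to [0,1]$ is the case $U = \Sigma$. There is no serious obstacle in this argument; the only thing to monitor carefully is that the constructed extensions respect both the cylinder prefix (in particular $\sigma^{(\alpha)}_{N+1} > \omega_N$) and the strict-increase condition placing $\sigma^{(\alpha)}$ in $\Sigma_\infty$, which the definitions above handle uniformly.
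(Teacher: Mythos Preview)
Your proposal is correct and follows essentially the same approach as the paper: locate a cylinder inside $U$, then construct, for each $\alpha\in[0,1]$, an explicit sequence in that cylinder whose P\'olya--Szeg\H{o} limsup equals $\alpha$, and conclude via Lemma~\ref{PS formula on Sigma lemma} and Lemma~\ref{basic results on lambda}(\ref{basic results on lambda 2}). The only cosmetic differences are that you invoke the density of $\Sigma_\infty$ explicitly to locate the cylinder (the paper does this implicitly), and you use different concrete tails (the paper takes $(\sigma_M+k)^{M+k}$ for $\alpha=0$ and $\lfloor(\sigma_M+k)^{1/\alpha}\rfloor$ for $\alpha\in(0,1]$, avoiding the recursive $\max$).
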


\begin{proof}
Let $U$ be a non-empty open subset of $\Sigma$. By definitions of the product topology and the subspace topology, we can find a cylinder set $\Upsilon_\sigma$ contained in $U$ for some $\sigma \in \Sigma_M$ with $M \in \mathbb{N}$. Put $\sigma \coloneqq (\sigma_j)_{j \in \mathbb{N}} = (\sigma_1, \dotsc, \sigma_M, \infty, \infty, \dotsc)$. For each $\alpha \in [0,1]$, define
\[
\sigma^{(\alpha)} \coloneqq 
\begin{cases}
( \sigma_1, \dotsc, \sigma_M, (\sigma_M+1)^{M+1}, (\sigma_M+2)^{M+2}, (\sigma_M+3)^{M+3}, \dotsc), &\text{if } \alpha = 0; \\
( \sigma_1, \dotsc, \sigma_M, \lfloor (\sigma_M+1)^{1/\alpha} \rfloor, \lfloor (\sigma_M+2)^{1/\alpha} \rfloor, \lfloor (\sigma_M+3)^{1/\alpha} \rfloor, \dotsc), &\text{if } \alpha \in (0,1].
\end{cases}
\]
Then, $\sigma^{(\alpha)} \in \Upsilon_\sigma \cap \Sigma_\infty \subseteq U \cap \Sigma_\infty$ for each $\alpha \in [0,1]$. By using \eqref{PS formula on Sigma}, we find that
\[
\lambda (\sigma^{(\alpha)}) 
= \begin{dcases}
\limsup_{k \to \infty} \frac{\log k}{\log (\sigma_M+k-M)^{k}} = 0, &\text{if } \alpha = 0; \\
\limsup_{k \to \infty} \frac{\log k}{\log \lfloor (\sigma_M+k-M)^{1/\alpha} \rfloor} = \alpha, &\text{if } \alpha \in (0,1],
\end{dcases}
\]
and so $\lambda (\sigma^{(\alpha)}) = \alpha$ for each $\alpha \in [0,1]$. Thus,
\[
[0,1] = \bigcup_{\alpha \in [0,1]} \lambda (\{ \sigma^{(\alpha)} \}) = \lambda \left( \bigcup_{\alpha \in [0,1]} \{ \sigma^{(\alpha)} \} \right) \subseteq \lambda (U \cap \Sigma_\infty) \subseteq \lambda (U) \subseteq [0,1],
\]
where the last inclusion follows from Lemma \ref{basic results on lambda}(\ref{basic results on lambda 2}), and this establishes the theorem.
\end{proof}

\begin{corollary} \label{lambda discontinuous corollary}
The convergence exponent $\lambda : \Sigma \to [0, 1]$ is discontinuous everywhere.
\end{corollary}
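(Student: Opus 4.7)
The plan is to exploit Theorem \ref{neighborhood image theorem} directly: since the image of every non-empty open set under $\lambda$ is all of $[0,1]$, every neighborhood of every point contains points whose $\lambda$-values are bounded away from the value of $\lambda$ at the center. This will force discontinuity everywhere.

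More precisely, fix an arbitrary $\sigma \in \Sigma$. I would choose $\alpha \in [0,1]$ with $\alpha \neq \lambda(\sigma)$, for example $\alpha = 0$ if $\lambda(\sigma) > 0$ and $\alpha = 1$ if $\lambda(\sigma) = 0$. Since $(\Sigma, \rho)$ is a metric space, for each $n \in \mathbb{N}$ the open $\rho$-ball $B_\rho(\sigma, 1/n)$ is a non-empty open subset of $\Sigma$ containing $\sigma$. By Theorem \ref{neighborhood image theorem}, $\lambda(B_\rho(\sigma, 1/n)) = [0,1]$, so in particular there exists $\tau_n \in B_\rho(\sigma, 1/n)$ with $\lambda(\tau_n) = \alpha$.

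Then $\rho(\tau_n, \sigma) < 1/n$ implies $\tau_n \to \sigma$ as $n \to \infty$, while $\lim_{n \to \infty} \lambda(\tau_n) = \alpha \neq \lambda(\sigma)$. Hence $\lambda$ fails to be continuous at $\sigma$, and since $\sigma \in \Sigma$ was arbitrary, $\lambda$ is discontinuous everywhere. There is no substantive obstacle here: essentially all of the work is already contained in Theorem \ref{neighborhood image theorem}, and this corollary is just the observation that a function whose image on every neighborhood is the full range $[0,1]$ cannot be continuous at any point.
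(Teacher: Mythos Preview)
Your proof is correct and takes essentially the same approach as the paper: the paper simply states that the corollary is an immediate consequence of Theorem \ref{neighborhood image theorem}, and your argument spells out exactly that immediate consequence via a sequence in shrinking balls. There is nothing to add.
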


\begin{proof}
The corollary is an immediate consequence of Theorem \ref{neighborhood image theorem}.
\end{proof}

\begin{corollary} \label{lambda not first Baire corollary}
The convergence exponent $\lambda \colon \Sigma \to [0,1]$ is not of Baire class one.
\end{corollary}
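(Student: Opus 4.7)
The plan is to combine Corollary \ref{lambda discontinuous corollary} with Proposition \ref{not first Baire proposition} and the Baire category theorem. The first provides the maximal possible set of discontinuities, while the second caps the size of that set under the Baire-class-one hypothesis, and these two constraints turn out to be incompatible on a nonmeager space.

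First I would record that the domain $\Sigma$ is a complete metric space by Proposition \ref{Sigma is compact metric}, so the Baire category theorem applies to it; in particular $\Sigma$ is nonmeager in itself. Next I would invoke Corollary \ref{lambda discontinuous corollary}, which tells us that the set of points of discontinuity of $\lambda \colon \Sigma \to [0,1]$ is all of $\Sigma$.

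With these two observations in hand, the argument is by contradiction. Suppose $\lambda$ were of Baire class one. Then Proposition \ref{not first Baire proposition} would force its set of points of discontinuity to be meager $F_\sigma$ in $\Sigma$. But this set is $\Sigma$ itself, which is nonmeager, giving the required contradiction.

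There is no real obstacle here; the whole point of assembling Corollary \ref{lambda discontinuous corollary} and the ambient remark after Proposition \ref{not first Baire proposition} in the preceding pages is precisely to make this deduction immediate. The only thing worth being careful about is the implicit use of the fact that $\lambda$ is real-valued on $\Sigma$ (by Lemma \ref{basic results on lambda}(\ref{basic results on lambda 2}) it takes values in $[0,1]$), so that Proposition \ref{not first Baire proposition}, which is stated for real-valued functions, genuinely applies.
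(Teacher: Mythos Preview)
Your proof is correct and follows essentially the same approach as the paper: combine the everywhere-discontinuity of $\lambda$ (Corollary \ref{lambda discontinuous corollary}) with the completeness of $\Sigma$ (Proposition \ref{Sigma is compact metric}) and the Baire-category constraint from Proposition \ref{not first Baire proposition}. Your explicit remark that $\lambda$ is real-valued, ensuring Proposition \ref{not first Baire proposition} applies, is a nice touch that the paper leaves implicit.
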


\begin{proof}
Since $\lambda \colon \Sigma \to [0,1]$ is discontinuous everywhere (Corollary \ref{lambda discontinuous corollary}) on the complete metric space $\Sigma$ (Proposition \ref{Sigma is compact metric}), the corollary follows from Proposition \ref{not first Baire proposition}.
\end{proof}

However, it turns out that $\lambda \colon \Sigma \to [0,1]$ is of Baire class two.

\begin{theorem} \label{lambda second Baire}
The convergence exponent $\lambda \colon \Sigma \to [0,1]$ is of Baire class two.
\end{theorem}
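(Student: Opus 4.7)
The plan is to invoke Lemma \ref{main lemma} with $X = Y = \Sigma$ and $h$ equal to the identity map $\mathrm{id}_\Sigma$. Since $\Sigma$ is a compact metrizable space by Proposition \ref{Sigma is compact metric}, the identity on $\Sigma$ is a continuous surjection between compact metrizable spaces, so the first two hypotheses of Lemma \ref{main lemma} are met automatically.

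Next, I would take for the required sequence of lower semicontinuous functions the maps $\psi_n$ from \eqref{definition of psi n}, re-indexing so that they form a sequence indexed by $\mathbb{N}$ (for instance, by setting $\psi_1 \equiv 0$, which is continuous and does not alter the limsup as $n \to \infty$). Each $\psi_n$ is continuous on $\Sigma$, hence lower semicontinuous. With this choice, since the preimage under the identity is a singleton, the function $\Phi$ produced by Lemma \ref{main lemma} is given, for each $\sigma \in \Sigma$, by
\[
\Phi(\sigma) = \inf_{\tau \in \{\sigma\}} \limsup_{n \to \infty} \psi_n(\tau) = \limsup_{n \to \infty} \psi_n(\sigma) = \lambda(\sigma),
\]
where the last equality is Lemma \ref{PS formula on Sigma lemma}. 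To invoke the ``in particular'' clause of Lemma \ref{main lemma}, which concludes that $\Phi$ is of Baire class two, I need $\Phi = \lambda$ to be real-valued on $\Sigma$, and this is precisely the content of Lemma \ref{basic results on lambda}(\ref{basic results on lambda 2}), which gives $\lambda(\Sigma) \subseteq [0,1]$.

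I do not foresee any genuine obstacle: Lemma \ref{main lemma} is engineered for exactly this kind of application, and the work of constructing the relevant double sequence of continuous approximants has already been absorbed into its proof via Baire's theorem on lower semicontinuous functions (Proposition \ref{lower semicontinuous function proposition}) and Lemma \ref{double limit Baire class two lemma}. The only minor bookkeeping is the trivial re-indexing of $(\psi_n)_{n \geq 2}$ to start at $n = 1$, and recording that real-valuedness of $\lambda$ has already been established.
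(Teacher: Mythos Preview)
Your proof is correct. The paper, however, takes a more direct route: it writes $\lambda(\sigma) = \lim_{n\to\infty}\lim_{m\to\infty}\lambda_{m,n}(\sigma)$ with $\lambda_{m,n}(\sigma) \coloneqq \max_{n\le k\le m}\psi_k(\sigma)$ for $m\ge n$ (and $0$ otherwise), observes that each $\lambda_{m,n}$ is continuous as a finite maximum of continuous functions, and then applies Lemma~\ref{double limit Baire class two lemma} directly. Your approach instead reuses the heavier Lemma~\ref{main lemma} with $h=\mathrm{id}_\Sigma$; in that degenerate case the infimum step (and hence Proposition~\ref{infimum of lsc is lsc proposition}) becomes vacuous, and the continuous approximants are produced abstractly via Baire's theorem on lower semicontinuous functions (Proposition~\ref{lower semicontinuous function proposition}) rather than written down explicitly. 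The paper's argument is therefore slightly more elementary and self-contained for this particular statement, while yours has the virtue of exhibiting Theorem~\ref{lambda second Baire} as the trivial special case of the machinery built for Theorem~\ref{lambda star second Baire}.
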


\begin{proof}
For each $\sigma \in \Sigma$, the formula \eqref{PS formula on Sigma} yields $\lambda (\sigma) = \lim_{n \to \infty} \left( \lim_{m \to \infty} \lambda_{m,n} (\sigma) \right)$, where
\[
\lambda_{m,n}(\sigma) \coloneqq \begin{cases} \max_{n \leq k \leq m}  \psi_k (\sigma), &\text{if } n \leq m; \\ 0, &\text{if } n > m. \end{cases}
\]
Note that for each $(n,m) \in \mathbb{N}^2$ with $n \leq m$, the function $\psi_k \colon \Sigma \to [0,1]$ is continuous for each $k \in \{ n, \dotsc, m \}$ by definition, and so, $\lambda_{m,n}$ is continuous on $\Sigma$. Therefore, since $\lambda$ is real-valued by Lemma \ref{basic results on lambda}(\ref{basic results on lambda 2}), we conclude by Lemma \ref{double limit Baire class two lemma} that $\lambda$ is of Baire class two on $\Sigma$.
\end{proof}

For each $\alpha \in [0,1]$, we denote by $L_\alpha$  the $\alpha$-level set of $\lambda \colon \Sigma \to [0,1]$, i.e.,
\[
L_\alpha \coloneqq \{ \sigma \in \Sigma : \lambda (\sigma) = \alpha \}.
\]

\begin{theorem} \label{L alpha is dense in Sigma}
The $\alpha$-level set $L_\alpha$ is dense in $\Sigma$ for each $\alpha \in [0,1]$.
\end{theorem}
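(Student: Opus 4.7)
The claim that $L_\alpha$ is dense in $\Sigma$ for each $\alpha \in [0,1]$ follows essentially for free from Theorem \ref{neighborhood image theorem}, so the plan is to unpack that observation carefully. Fix $\alpha \in [0,1]$. To show density, I need to verify that every non-empty open subset $U$ of $\Sigma$ meets $L_\alpha$, i.e., contains a Pierce sequence $\tau$ with $\lambda(\tau) = \alpha$.

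The first step is to let $U$ be an arbitrary non-empty open subset of $\Sigma$ and apply Theorem \ref{neighborhood image theorem}, which gives $\lambda(U \cap \Sigma_\infty) = [0,1]$. In particular, $\alpha \in \lambda(U \cap \Sigma_\infty)$, so there is some $\tau \in U \cap \Sigma_\infty$ with $\lambda(\tau) = \alpha$. Then $\tau \in U \cap L_\alpha$, so $U \cap L_\alpha \neq \varnothing$, which is exactly the definition of density of $L_\alpha$ in $\Sigma$.

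There is essentially no obstacle: the heavy lifting—exhibiting a concrete Pierce sequence inside a prescribed cylinder whose convergence exponent equals any target value in $[0,1]$—was already done in the proof of Theorem \ref{neighborhood image theorem} via the explicit construction $\sigma^{(\alpha)}$. The only small point worth mentioning explicitly is that any non-empty open set in $\Sigma$ contains a basic open set of the form $\Upsilon_\sigma$ with $\sigma \in \Sigma_M$ for some $M \in \mathbb{N}$, which is precisely the situation handled in Theorem \ref{neighborhood image theorem}. Thus the entire proof reduces to one sentence citing that theorem.
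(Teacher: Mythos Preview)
Your proof is correct and follows exactly the same approach as the paper, which simply notes that the theorem is clear in view of Theorem~\ref{neighborhood image theorem}. Your version just unpacks this observation in slightly more detail.
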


\begin{proof}
The theorem is clear in view of Theorem \ref{neighborhood image theorem}.
\end{proof}

For each $\alpha \in \mathbb{R}$, define the superlevel set and the sublevel set of $\lambda \colon \Sigma \to [0,1]$ by
\[
L_\alpha^+ \coloneqq \{ \sigma \in \Sigma : \lambda (\sigma) \geq \alpha \} \quad \text{and} \quad
L_\alpha^- \coloneqq \{ \sigma \in \Sigma : \lambda (\sigma) \leq \alpha \},
\]
respectively.

\begin{lemma} \label{L alpha + is G delta}
The superlevel set $L_\alpha^+$ is $G_{\delta}$ in $\Sigma$ for each $\alpha \in \mathbb{R}$. In particular, $L_\alpha^+$ is $G_\delta$ in $\Sigma_\infty$ for each $\alpha \in (0,1]$.
\end{lemma}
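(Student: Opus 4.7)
The plan is to combine Lemma~\ref{PS formula on Sigma lemma} with Lemma~\ref{superlevel set of limsup of lsc is G delta} to deduce the assertion essentially as a direct corollary. By Lemma~\ref{PS formula on Sigma lemma}, we have the pointwise identity $\lambda(\sigma) = \limsup_{n \to \infty} \psi_n(\sigma)$ on $\Sigma$, where the functions $\psi_n \colon \Sigma \to [0,1]$ defined in \eqref{definition of psi n} are continuous, as noted right after the definition. Since continuous functions are in particular lower semicontinuous, Lemma~\ref{superlevel set of limsup of lsc is G delta} applies with $X = \Sigma$ and $g_n = \psi_n$, and yields at once that
\[
L_\alpha^+ = \left\{ \sigma \in \Sigma : \limsup_{n \to \infty} \psi_n(\sigma) \geq \alpha \right\}
\]
is $G_\delta$ in $\Sigma$ for every $\alpha \in \mathbb{R}$. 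This settles the first assertion.

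For the ``in particular'' clause, fix $\alpha \in (0, 1]$ and first observe that $L_\alpha^+ \subseteq \Sigma_\infty$. Indeed, by Lemma~\ref{basic results on lambda}(\ref{basic results on lambda 1}), every $\sigma \in \bigcup_{n \in \mathbb{N}_0} \Sigma_n$ satisfies $\lambda(\sigma) = 0 < \alpha$, so no such $\sigma$ belongs to $L_\alpha^+$. Now, by the first part we may write $L_\alpha^+ = \bigcap_{n \in \mathbb{N}} U_n$ with each $U_n$ open in $\Sigma$. Intersecting with $\Sigma_\infty$ and using $L_\alpha^+ \subseteq \Sigma_\infty$, we get
\[
L_\alpha^+ = L_\alpha^+ \cap \Sigma_\infty = \bigcap_{n \in \mathbb{N}} (U_n \cap \Sigma_\infty),
\]
and each $U_n \cap \Sigma_\infty$ is open in the subspace topology of $\Sigma_\infty$. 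Hence $L_\alpha^+$ is $G_\delta$ in $\Sigma_\infty$.

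I do not anticipate a genuine obstacle here; the whole content sits in recognizing that Lemma~\ref{PS formula on Sigma lemma} expresses $\lambda$ as the limsup of continuous functions, after which Lemma~\ref{superlevel set of limsup of lsc is G delta} supplies the $G_\delta$ structure, and Lemma~\ref{basic results on lambda}(\ref{basic results on lambda 1}) is used only to confirm that for $\alpha > 0$ the superlevel set already lies inside $\Sigma_\infty$, so that ``$G_\delta$ in $\Sigma$'' automatically upgrades to ``$G_\delta$ in $\Sigma_\infty$''.
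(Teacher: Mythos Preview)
Your proof is correct and follows essentially the same approach as the paper: both invoke the limsup formula from Lemma~\ref{PS formula on Sigma lemma}, observe that the $\psi_n$ are continuous (hence lower semicontinuous), and apply Lemma~\ref{superlevel set of limsup of lsc is G delta}; for the second assertion both use Lemma~\ref{basic results on lambda}(\ref{basic results on lambda 1}) to note that $L_\alpha^+ \subseteq \Sigma_\infty$ when $\alpha>0$. Your version merely spells out the subspace-topology step a bit more explicitly than the paper does.
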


\begin{proof}
Since each $\psi_n \colon \Sigma \to [0,1]$ is continuous, hence lower semicontinuous, and since $\lambda (\sigma) = \limsup_{n \to \infty} \psi_n(\sigma)$ for each $\sigma \coloneqq (\sigma_n)_{n \in \mathbb{N}} \in \Sigma$ by \eqref{PS formula on Sigma}, it follows from Lemma \ref{superlevel set of limsup of lsc is G delta} that each $L_\alpha^+$ is $G_\delta$ in $\Sigma$ for each $\alpha \in \mathbb{R}$.

For the second assertion, it is enough to note that $L_\alpha^+ \subseteq \Sigma_\infty$ for each $\alpha \in (0,1]$ by Lemma \ref{basic results on lambda}(\ref{basic results on lambda 1}).
\end{proof}

\begin{lemma} \label{L1 is G delta}
The $1$-level set $L_1$ is $G_\delta$ in $\Sigma_\infty$, and in $\Sigma$.
\end{lemma}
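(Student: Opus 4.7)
The proof is a direct combination of the two preceding lemmas and the fact that $\lambda$ is $[0,1]$-valued. By Lemma \ref{basic results on lambda}(\ref{basic results on lambda 2}), we have $\lambda(\Sigma) \subseteq [0,1]$, so the condition $\lambda(\sigma) = 1$ is equivalent to $\lambda(\sigma) \geq 1$. In other words, $L_1$ coincides exactly with the superlevel set $L_1^+$. Therefore Lemma \ref{L alpha + is G delta}, applied with $\alpha = 1$, immediately yields that $L_1 = L_1^+$ is $G_\delta$ in $\Sigma$ and $G_\delta$ in $\Sigma_\infty$ (the latter because $L_1^+ \subseteq \Sigma_\infty$ by Lemma \ref{basic results on lambda}(\ref{basic results on lambda 1}), so the intersection $L_1^+ \cap \Sigma_\infty = L_1^+$ is automatically $G_\delta$ in the subspace $\Sigma_\infty$).

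So the plan is simply to write down this identification $L_1 = L_1^+$ and invoke Lemma \ref{L alpha + is G delta}. There is really no obstacle here: the only thing to verify is the trivial observation that $\{\lambda = 1\} = \{\lambda \geq 1\}$ when $\lambda$ is bounded above by $1$, which is exactly what Lemma \ref{basic results on lambda}(\ref{basic results on lambda 2}) provides. The lemma is stated separately because the formulation in terms of a level set (rather than a superlevel set) will be convenient in the applications to follow, but structurally it is essentially a corollary of Lemma \ref{L alpha + is G delta}.
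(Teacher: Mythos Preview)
Your proposal is correct and follows essentially the same approach as the paper: identify $L_1$ with $L_1^+$ using $\lambda(\Sigma)\subseteq[0,1]$ from Lemma~\ref{basic results on lambda}(\ref{basic results on lambda 2}), and then invoke Lemma~\ref{L alpha + is G delta}. The paper's proof is nearly word-for-word the same, just more terse.
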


\begin{proof}
Since $\lambda(\Sigma) \subseteq [0,1]$ by Lemma \ref{basic results on lambda}(\ref{basic results on lambda 2}), we have $L_1^+ = L_1$. Thus, Lemma \ref{L alpha + is G delta} tells us that $L_1$ is $G_\delta$ in $\Sigma_\infty$, and in $\Sigma$.
\end{proof}

\begin{theorem} \label{L alpha dense G delta theorem}
The $\alpha$-level set $L_\alpha$ is dense but not $G_\delta$ in $\Sigma$ for each $\alpha \in [0,1)$, and the $1$-level set $L_1$ is dense $G_\delta$ in $\Sigma$. Consequently, $L_\alpha$ is meager in $\Sigma$ for each $\alpha \in [0,1)$, and $L_1$ is comeager and nonmeager in $\Sigma$.
\end{theorem}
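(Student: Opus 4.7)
The plan is to assemble the theorem directly from the machinery already developed, since every ingredient has been established in the preceding lemmas and theorems; the only real thinking is to see how they fit together.

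First I would dispose of density. Theorem \ref{L alpha is dense in Sigma} states that $L_\alpha$ is dense in $\Sigma$ for every $\alpha \in [0,1]$, so this half of the claim requires no further argument for any $\alpha$ in $[0,1]$, including $\alpha = 1$.

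Next I would handle the $1$-level set in isolation. By Lemma \ref{L1 is G delta}, $L_1$ is $G_\delta$ in $\Sigma$, so combined with density, $L_1$ is dense $G_\delta$ in $\Sigma$. Proposition \ref{Sigma is compact metric} ensures $\Sigma$ is a complete metric space, so Fact \ref{basic facts}(\ref{basic facts 1}) immediately yields that $L_1$ is comeager and nonmeager in $\Sigma$.

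For $\alpha \in [0,1)$, the key observation is that the collection $\{L_\beta\}_{\beta \in [0,1]}$ is a partition of $\Sigma$ (since $\lambda(\Sigma) \subseteq [0,1]$ by Lemma \ref{basic results on lambda}(\ref{basic results on lambda 2})). Applying Lemma \ref{at most one G delta level set}, at most one member of this partition can be dense $G_\delta$ in $\Sigma$. Since $L_1$ already fills that role, were $L_\alpha$ also $G_\delta$ in $\Sigma$, its density would make it a second dense $G_\delta$ member of the partition, a contradiction; hence $L_\alpha$ is not $G_\delta$. Finally, meagerness is a direct consequence of comeagerness of $L_1$: since $L_\alpha \cap L_1 = \varnothing$, we have $L_\alpha \subseteq \Sigma \setminus L_1$, and the latter is meager in $\Sigma$ because $L_1$ is comeager, so $L_\alpha$ inherits meagerness.

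There is essentially no obstacle here; the whole theorem is a packaging of Theorem \ref{L alpha is dense in Sigma}, Lemma \ref{L1 is G delta}, Lemma \ref{at most one G delta level set}, and Fact \ref{basic facts}. The only conceptual step worth highlighting is the use of Lemma \ref{at most one G delta level set} to rule out $G_\delta$-ness for $\alpha \in [0,1)$ without having to analyze the Borel complexity of those level sets individually.
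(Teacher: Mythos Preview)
Your proposal is correct and follows essentially the same approach as the paper's own proof: density from Theorem \ref{L alpha is dense in Sigma}, $G_\delta$-ness of $L_1$ from Lemma \ref{L1 is G delta}, the uniqueness argument via Lemma \ref{at most one G delta level set} to rule out $G_\delta$-ness for $\alpha \in [0,1)$, and the comeagerness/meagerness consequences via Fact \ref{basic facts}(\ref{basic facts 1}). The only cosmetic difference is the order in which you derive the comeagerness of $L_1$ relative to the not-$G_\delta$ step, which is immaterial.
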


\begin{proof}
By Theorem \ref{L alpha is dense in Sigma}, we know that $L_\alpha$ is dense in $\Sigma$ for each $\alpha \in [0,1]$. We further know that $L_1$ is $G_\delta$ in $\Sigma$ from Lemma \ref{L1 is G delta}. So, it remains to show that $L_\alpha$ is not $G_\delta$ in $\Sigma$ for each $\alpha \in [0,1)$.

To this end, recall that $\Sigma$ is a complete metric space (Proposition \ref{Sigma is compact metric}). We can write $\Sigma$ as a disjoint union of the collection $\{ L_\alpha : \alpha \in [0,1] \}$. Then, we deduce from Lemma \ref{at most one G delta level set}, in conjunction with the preceding paragraph, that $L_1$ is the unique dense $G_\delta$ element in the collection $\{ L_\alpha : \alpha \in [0,1] \}$. But, for each $\alpha \in [0,1)$, $L_\alpha$ is dense in $\Sigma$ by Theorem \ref{L alpha is dense in Sigma}, and thus, $L_\alpha$ cannot be $G_\delta$ in $\Sigma$.

Now, we deduce that $L_1$ is comeager and nonmeager in $\Sigma$ in view of Fact \ref{basic facts}(\ref{basic facts 1}). It then follows that each $L_\alpha$, $\alpha \in [0,1)$, is meager in $\Sigma$ as a subset of the meager set $\Sigma \setminus L_1$.
\end{proof}

\begin{lemma} \label{cardinality of L alpha lemma}
We have $|L_\alpha \cap \Sigma_\infty| = \mathfrak{c}$ for each $\alpha \in [0,1]$.
\end{lemma}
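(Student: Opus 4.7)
The goal is to establish the equality $|L_\alpha \cap \Sigma_\infty| = \mathfrak{c}$. The upper bound is immediate since $L_\alpha \cap \Sigma_\infty \subseteq \Sigma_\infty \subseteq \mathbb{N}_\infty^\mathbb{N}$, a set of cardinality $\mathfrak{c}$. So the real content is to exhibit a $\mathfrak{c}$-sized family of distinct sequences in $\Sigma_\infty$ whose $\lambda$-value equals $\alpha$. The natural plan is a binary-perturbation argument: first construct one base sequence with $\lambda = \alpha$ and enough slack between consecutive terms to permit $\{0,1\}$-valued perturbations, then parametrize the resulting perturbations by $\{0,1\}^\mathbb{N}$.

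First, I would build a sequence $\sigma^{\star} \coloneqq (s_n)_{n \in \mathbb{N}} \in \Sigma_\infty$ satisfying both $\lambda(\sigma^{\star}) = \alpha$ and the gap bound $s_{n+1} - s_n \geq 2$ for every $n$. Concretely, take $s_n \coloneqq 2^n$ when $\alpha = 0$, and $s_n \coloneqq 2n + \lfloor n^{1/\alpha} \rfloor$ when $\alpha \in (0,1]$. The gap inequality is immediate in both cases: $2^{n+1} - 2^n = 2^n \geq 2$ for $n \geq 1$, and $s_{n+1} - s_n = 2 + \bigl(\lfloor (n+1)^{1/\alpha} \rfloor - \lfloor n^{1/\alpha} \rfloor\bigr) \geq 2$ by monotonicity of the floor. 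The identity $\lambda(\sigma^{\star}) = \alpha$ then follows from Lemma \ref{PS formula on Sigma lemma}: one computes $\log n / \log(2^n) \to 0$ in the first case, and $\log s_n = (1/\alpha)\log n + O(1)$ in the second, which gives $\log n / \log s_n \to \alpha$.

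Next, for each $\epsilon \coloneqq (\epsilon_n)_{n \in \mathbb{N}} \in \{0,1\}^\mathbb{N}$, define $\sigma^{\epsilon} \coloneqq (s_n + \epsilon_n)_{n \in \mathbb{N}}$. The gap bound forces $s_n + \epsilon_n \leq s_n + 1 < s_{n+1} \leq s_{n+1} + \epsilon_{n+1}$, so $\sigma^{\epsilon} \in \Sigma_\infty$; and the assignment $\epsilon \mapsto \sigma^{\epsilon}$ is manifestly injective, since two binary sequences differing at coordinate $n$ yield sequences differing at coordinate $n$. Because $s_n \to \infty$, we have $\log(s_n + \epsilon_n) = \log s_n + O(1/s_n)$, hence $\log n / \log(s_n + \epsilon_n) = \log n / \log s_n + o(1)$ as $n \to \infty$. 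Consequently $\limsup_{n \to \infty} \log n / \log(s_n + \epsilon_n) = \alpha$, and Lemma \ref{PS formula on Sigma lemma} yields $\lambda(\sigma^{\epsilon}) = \alpha$. The family $\{ \sigma^{\epsilon} : \epsilon \in \{0,1\}^\mathbb{N} \}$ therefore sits inside $L_\alpha \cap \Sigma_\infty$ and has cardinality $\mathfrak{c}$, completing the proof.

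The only step calling for any thought is selecting a base sequence satisfying both the prescribed $\lambda$-value and the gap bound $\geq 2$; once such a sequence is in hand, the remainder is a routine perturbation-and-asymptotics argument, and I do not anticipate a genuine obstacle.
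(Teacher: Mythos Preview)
Your proof is correct and follows essentially the same approach as the paper: both argue the upper bound via $\Sigma_\infty \subseteq \mathbb{N}_\infty^{\mathbb{N}}$ and obtain the lower bound by injecting $\{0,1\}^{\mathbb{N}}$ into $L_\alpha \cap \Sigma_\infty$ through binary perturbations of a base sequence with the prescribed $\lambda$-value. The only cosmetic difference is that the paper applies the perturbation \emph{before} the power (taking the $k$th term to be $\lfloor(\epsilon_k+2k-1)^{1/\alpha}\rfloor$, or $(\epsilon_k+2k-1)^k$ when $\alpha=0$), whereas you build a base sequence $(s_n)$ with gaps $\geq 2$ and add $\epsilon_n$ afterward; both variants achieve the same end by the same mechanism.
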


\begin{proof}
Fix $\alpha \in [0,1]$. By the Cantor-Schr{\"o}der-Bernstein theorem, it suffices to prove two inequalities $|L_\alpha \cap \Sigma_\infty| \leq \mathfrak{c}$ and $|L_\alpha \cap \Sigma_\infty| \geq \mathfrak{c}$. On one hand, we have
\[
|L_\alpha \cap \Sigma_\infty| \leq |\Sigma| \leq |\mathbb{N}_\infty^{\mathbb{N}}| = |\mathbb{N}^{\mathbb{N}}| = \mathfrak{c}.
\]
Here, the inequalities follow from the inclusions $L_\alpha \cap \Sigma_\infty \subseteq \Sigma \subseteq \mathbb{N}_\infty^{\mathbb{N}}$. For the first equality, it is enough to notice that there is an obvious bijection between $\mathbb{N}_\infty^{\mathbb{N}}$ and $\mathbb{N}^{\mathbb{N}}$. The second equality is classical.

For the reverse inequality, we construct an injection from $\{ 0, 1 \}^{\mathbb{N}}$ into $L_\alpha \cap \Sigma_\infty$. For each $\epsilon \coloneqq (\epsilon_k)_{k \in \mathbb{N}} \in \{ 0, 1 \}^{\mathbb{N}}$, define a sequence $\sigma_\epsilon$ by
\[
\sigma_\epsilon
\coloneqq
\begin{cases}
 ( (\epsilon_1+1)^1, (\epsilon_2+3)^2, \dotsc, (\epsilon_k+2k-1)^k, \dotsc), &\text{if } \alpha = 0; \\
 ( \lfloor (\epsilon_1+1)^{1/\alpha} \rfloor, \lfloor (\epsilon_2+3)^{1/\alpha} \rfloor, \dotsc, \lfloor (\epsilon_k+2k-1)^{1/\alpha} \rfloor, \dotsc), &\text{if } \alpha \in (0,1].
\end{cases}
\]
It is clear that $\sigma_\epsilon \in \Sigma_\infty$ and that the mapping $\epsilon \mapsto \sigma_\epsilon$ is injective. Moreover, since $\epsilon_k \in \{0, 1\}$ for each $k \in \mathbb{N}$, we have
\[
\lambda (\sigma_\epsilon) = 
\begin{dcases}
\limsup_{k \to \infty} \frac{\log k}{\log (\epsilon_k+2k-1)^k} = 0, &\text{if } \alpha = 0; \\
\limsup_{k \to \infty} \frac{\log k}{\log \lfloor (\epsilon_k+2k-1)^{1/\alpha} \rfloor} = \alpha, &\text{if } \alpha \in (0,1],
\end{dcases}
\]
by using the formula \eqref{PS formula on Sigma}. This shows that $\sigma_\epsilon \in L_\alpha$, and hence $\sigma_\epsilon \in L_\alpha \cap \Sigma_\infty$. Therefore, by the well-known fact $|\{ 0, 1 \}^{\mathbb{N}}| = \mathfrak{c}$, we find that
\[
\mathfrak{c} = |\{ 0, 1 \}^{\mathbb{N}}| \leq |L_\alpha \cap \Sigma_\infty|.
\]
This completes the proof.
\end{proof}

The rest of this section is dedicated to the map $\Psi^{(s)} \colon \Sigma \to [0, \infty]$, for each $s \in (0,1]$, defined by
\[
\Psi^{(s)} (\sigma) \coloneqq \sum_{k \in \mathbb{N}} \frac{1}{\sigma_k^{s}}
\]
for each $\sigma \coloneqq (\sigma_k)_{k \in \mathbb{N}} \in \Sigma$.

\begin{lemma} \label{Psi alpha lower semicontinuity lemma}
For each $s \in (0,1]$, the mapping $\Psi^{(s)} \colon \Sigma \to [0, \infty]$ is lower semicontinuous.
\end{lemma}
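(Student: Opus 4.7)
The plan is to represent $\Psi^{(s)}$ as the pointwise supremum of an increasing sequence of continuous functions and then invoke the standard fact that such a supremum is lower semicontinuous. Specifically, for each $N \in \mathbb{N}$ I would define the partial sum
\[
\Psi^{(s)}_N (\sigma) \coloneqq \sum_{k=1}^{N} \frac{1}{\sigma_k^{s}}
\]
for $\sigma \coloneqq (\sigma_k)_{k \in \mathbb{N}} \in \Sigma$, and reduce everything to verifying (i) continuity of each $\Psi^{(s)}_N$ on $\Sigma$ and (ii) the identity $\Psi^{(s)} = \sup_{N \in \mathbb{N}} \Psi^{(s)}_N$.

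For step (i), since a finite sum of continuous functions is continuous, it suffices to prove that for each fixed $k \in \mathbb{N}$ the single-coordinate map $\sigma \mapsto 1/\sigma_k^s$ is continuous on $\Sigma$. Because $\Sigma$ carries the subspace topology from $\mathbb{N}_\infty^{\mathbb{N}}$, the $k$th projection $\sigma \mapsto \sigma_k$ is continuous into $\mathbb{N}_\infty$, so it remains to check that the map $\eta \colon \mathbb{N}_\infty \to [0,1]$ given by $\eta(n) = 1/n^s$ on $\mathbb{N}$ and $\eta(\infty) = 0$ is continuous. Since $\mathbb{N}_\infty$ is the one-point compactification of the discrete space $\mathbb{N}$, continuity of $\eta$ reduces to continuity at the point $\infty$, which amounts to the assertion $1/n^s \to 0$ as $n \to \infty$ -- a fact that holds for every $s > 0$.

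For step (ii), non-negativity of the summands $1/\sigma_k^s$ makes $(\Psi^{(s)}_N (\sigma))_{N \in \mathbb{N}}$ non-decreasing in $N$, so
\[
\Psi^{(s)} (\sigma) = \lim_{N \to \infty} \Psi^{(s)}_N (\sigma) = \sup_{N \in \mathbb{N}} \Psi^{(s)}_N (\sigma)
\]
for every $\sigma \in \Sigma$. Lower semicontinuity then follows from the classical fact that the pointwise supremum of a family of lower semicontinuous $[-\infty,\infty]$-valued functions is again lower semicontinuous; concretely, for any $\alpha \in \mathbb{R}$,
\[
\{ \sigma \in \Sigma : \Psi^{(s)}(\sigma) > \alpha \} = \bigcup_{N \in \mathbb{N}} \{ \sigma \in \Sigma : \Psi^{(s)}_N(\sigma) > \alpha \}
\]
is open as a union of open sets. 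There is no genuine obstacle in this argument; the only delicate point worth spelling out is the continuity of $\eta$ at $\infty$, which is immediate from the definition of the one-point compactification together with $s > 0$.
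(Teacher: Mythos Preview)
Your proof is correct. The argument that each partial sum $\Psi^{(s)}_N$ is continuous (via continuity of the projections and of $\eta \colon \mathbb{N}_\infty \to [0,1]$, $n \mapsto 1/n^s$, at the point $\infty$) is sound, and expressing $\Psi^{(s)}$ as an increasing pointwise limit of these continuous functions immediately yields lower semicontinuity.

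The paper takes a slightly different, more hands-on route: it fixes $y \in \mathbb{R}$ and shows directly that the superlevel set $\{ \sigma \in \Sigma : \Psi^{(s)}(\sigma) > y \}$ is open by exhibiting, for each $\sigma$ in that set, a cylinder neighborhood $\Sigma \cap \bigl( \prod_{k=1}^M \{ \sigma_k \} \times \prod_{k>M} \mathbb{N}_\infty \bigr)$ on which the partial sum already exceeds $y$. In effect the paper's argument is your argument with the abstraction unwound: choosing $M$ so that $\sum_{k=1}^M 1/\sigma_k^s > y$ is precisely finding an $N$ with $\Psi^{(s)}_N(\sigma) > y$, and the cylinder step is the observation that $\Psi^{(s)}_N$ is locally constant (hence continuous) on cylinders of depth $N$. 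Your version is more modular and matches the style the paper uses elsewhere (e.g., the continuity of the functions $\psi_n$ in \eqref{definition of psi n}); the paper's version is fully self-contained and avoids appealing to the general supremum-of-lsc fact.
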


\begin{proof}
Let $s \in (0,1]$. We show that $\{ \sigma \in \Sigma : \Psi^{(s)} (\sigma) > y \}$ is open in $\Sigma$ for each $y \in \mathbb{R}$. Note first that
\[
\{ \sigma \in \Sigma : \Psi^{(s)} (\sigma) > y \}
= \begin{cases}
\Sigma, &\text{if } y < 0; \\
\Sigma \setminus \{ (\infty, \infty, \dotsc) \}, &\text{if } y = 0,
\end{cases}
\]
where $\Sigma$ is vacuously open and the latter set, as the complement of a singleton, is open since $\Sigma$ is a metric space (Proposition \ref{Sigma is compact metric}). Now, fix $y \in (0, \infty)$. Let $\sigma \coloneqq (\sigma_k)_{k \in \mathbb{N}} \in \{ \sigma \in \Sigma : \Psi^{(s)} (\sigma) > y \}$ be arbitrary. Then, we can find an $M \in \mathbb{N}$ such that
\[
\sum_{k=1}^{M-1} \frac{1}{\sigma_k^{s}} \leq y \quad \text{and} \quad \sum_{k=1}^M \frac{1}{\sigma_k^{s}} > y.
\]
Clearly, $\sigma_M \neq \infty$, which, in conjunction with the definition of $\Sigma$, tells us that $\sigma_k \in \mathbb{N}$ for all $1 \leq k \leq M$. Consider the open set
\[
U \coloneqq \Sigma \cap \left( \prod_{k=1}^M \{ \sigma_k \} \times \prod_{k > M} \mathbb{N}_\infty \right)
\] 
in $\Sigma$. It is obvious that $\sigma \in U$. Moreover, for any $\tau \coloneqq (\tau_k)_{k \in \mathbb{N}} \in U$, we have
\begin{align*}
\Psi^{(s)} (\tau) 
= \sum_{k=1}^M \frac{1}{\tau_k^{s}} + \sum_{k > M} \frac{1}{\tau_k^{s}}
= \sum_{k=1}^M \frac{1}{\sigma_k^{s}} + \sum_{k > M} \frac{1}{\tau_k^{s}}
> y.
\end{align*}
Thus, $U \subseteq \{ \sigma \in \Sigma : \Psi^{(s)} (\sigma) > y \}$, and this completes the proof.
\end{proof}

\begin{remark} \label{Psi alpha not upper semicontinuous}
We remark that $\Psi^{(s)} \colon \Sigma \to [0,\infty]$ fails to be upper semicontinuous for any $s \in (0,1]$. To see this, fix $s \in (0,1]$. Let $\sigma \coloneqq (\sigma_k)_{k \in \mathbb{N}} \in \Sigma_\infty$ with $M \coloneqq \Psi^{(s)} (\sigma) < \infty$, and consider a sequence $(\bm{\tau}_j)_{j \in \mathbb{N}}$ in $\Sigma$ given by
\[
\bm{\tau}_j \coloneqq (\sigma_1, \dotsc, \sigma_j, \lfloor (\sigma_j+1)^{1/s} \rfloor, \lfloor (\sigma_j+2)^{1/s} \rfloor, \dotsc)
\]
for each $j \in \mathbb{N}$. Then, $\bm{\tau}_j \to \sigma$ as $j \to \infty$, and $\Psi^{(s)} (\bm{\tau}_j) = \infty > M$ for all $j \in \mathbb{N}$. This, in particular, shows that the superlevel set $\{ \sigma \in \Sigma : \Psi^{(s)} (\sigma) \geq 2M \}$ is not closed in $\Sigma$.
\end{remark}

\begin{lemma} \label{density lemma 1}
For each $s \in (0,1]$, the $\infty$-level set $L_\infty (\Psi^{(s)}) \coloneqq \{ \sigma \in \Sigma : \Psi^{(s)} (\sigma) = \infty \}$ is dense $G_\delta$ in $\Sigma_\infty$.
\end{lemma}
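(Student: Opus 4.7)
The plan is to handle the two assertions—$G_\delta$-ness and density—separately, and for both I would exploit the lower semicontinuity of $\Psi^{(s)}$ together with the divergence of $\sum 1/n^s$ for $s \in (0,1]$.

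For the $G_\delta$ assertion, I would write
\[
L_\infty(\Psi^{(s)}) = \bigcap_{N \in \mathbb{N}} \{ \sigma \in \Sigma : \Psi^{(s)}(\sigma) > N \}.
\]
Each superlevel set on the right is open in $\Sigma$ by Lemma \ref{Psi alpha lower semicontinuity lemma}, so the intersection is $G_\delta$ in $\Sigma$. Because any $\sigma \in \Sigma \setminus \Sigma_\infty$ lies in some $\Sigma_n$ with $n \in \mathbb{N}_0$ and hence has $\Psi^{(s)}(\sigma) < \infty$ (a finite sum, with the convention $1/\infty^s = 0$), we have $L_\infty(\Psi^{(s)}) \subseteq \Sigma_\infty$, and therefore $L_\infty(\Psi^{(s)})$ is $G_\delta$ in $\Sigma_\infty$ as well.

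For density, let $U$ be a non-empty open subset of $\Sigma_\infty$, write $U = V \cap \Sigma_\infty$ with $V$ open in $\Sigma$, and pick any $\sigma \coloneqq (\sigma_k)_{k \in \mathbb{N}} \in U$. Since $\sigma \in \Sigma_\infty$, every $\sigma_k$ lies in $\mathbb{N}$, and $\mathbb{N}$ is discrete in $\mathbb{N}_\infty$, so by the definition of the product topology there exists $n \in \mathbb{N}$ such that the cylinder $\Upsilon_{\sigma^{(n)}} \subseteq V$, where $\sigma^{(n)} \coloneqq (\sigma_1, \ldots, \sigma_n, \infty, \infty, \ldots) \in \Sigma_n$. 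I then define $\tau \coloneqq (\tau_k)_{k \in \mathbb{N}}$ by $\tau_k \coloneqq \sigma_k$ for $1 \leq k \leq n$ and $\tau_k \coloneqq \sigma_n + (k - n)$ for $k > n$. The sequence $\tau$ is strictly increasing, hence $\tau \in \Sigma_\infty$, and clearly $\tau \in \Upsilon_{\sigma^{(n)}} \cap \Sigma_\infty \subseteq U$. Finally,
\[
\Psi^{(s)}(\tau) \geq \sum_{k > n} \frac{1}{\tau_k^s} = \sum_{j=1}^{\infty} \frac{1}{(\sigma_n + j)^s} = \infty,
\]
the last equality following from $s \in (0,1]$ and the divergence of the harmonic series (more precisely, of any tail of $\sum 1/m^s$ with $s \leq 1$). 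Hence $\tau \in U \cap L_\infty(\Psi^{(s)})$, which proves density.

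There is really no serious obstacle: the only thing one must notice is that any non-empty basic open set in $\Sigma_\infty$ is determined by fixing finitely many initial coordinates (because $\mathbb{N}$ is discrete inside $\mathbb{N}_\infty$), so one has complete freedom to extend by any strictly increasing integer tail and can therefore force the tail sum to diverge simply by choosing consecutive integers.
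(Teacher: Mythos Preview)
Your proof is correct and follows essentially the same route as the paper: the $G_\delta$ part is identical (intersection of open superlevel sets via Lemma~\ref{Psi alpha lower semicontinuity lemma}, together with the observation $L_\infty(\Psi^{(s)}) \subseteq \Sigma_\infty$), and for density the paper likewise constructs an explicit element of $L_\infty(\Psi^{(s)})$ in any basic open set. The only cosmetic difference is the choice of tail: the paper (via Remark~\ref{Psi alpha not upper semicontinuous}) appends $\lfloor (\sigma_n + j)^{1/s} \rfloor$, whereas your choice of consecutive integers $\sigma_n + j$ is simpler and works just as well since $\sum_j (\sigma_n+j)^{-s}$ already diverges for $s \le 1$.
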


\begin{proof}
Let $s \in (0,1]$. We first note that $L_\infty (\Psi^{(s)}) \subseteq \Sigma_\infty$. To see this, let $\sigma \coloneqq (\sigma_j)_{j \in \mathbb{N}} \in \Sigma \setminus \Sigma_\infty$. Then, we can find a $K \in \mathbb{N}$ such that $\sigma_j = \infty$, hence $1/\sigma_j^s = 0$ due to the convention, for all $j > K$. This implies that $\Psi^{(s)}(\sigma) = \sum_{j=1}^{K} 1/\sigma_j^s$ is finite, i.e., $\sigma \in \Sigma \setminus L_\infty (\Psi^{(s)})$.

Now, the argument in Remark \ref{Psi alpha not upper semicontinuous} proves the denseness of $L_\infty (\Psi^{(s)})$ in $\Sigma_\infty$. Next, by writing
\begin{align*}
L_\infty (\Psi^{(s)}) = \bigcap_{n=1}^\infty \{ \sigma \in \Sigma : \Psi^{(s)} (\sigma) > n \} = \bigcap_{n=1}^\infty [\Sigma_\infty \cap \{ \sigma \in \Sigma : \Psi^{(s)} (\sigma) > n \}],
\end{align*}
we deduce that $L_\infty (\Psi^{(s)})$ is $G_\delta$ in $\Sigma_\infty$ since each $\{ \sigma \in \Sigma : \Psi^{(s)} (\sigma) > n \}$ is open in $\Sigma$ by Lemma \ref{Psi alpha lower semicontinuity lemma}.
\end{proof}

\section{Proofs of main results} \label{Proofs of main results}

In this section, we prove the main results of this paper mentioned in Section \ref{Introduction}.

\subsection{Proofs of results on the convergence exponent}

\begin{proof} [Proof of Theorem \ref{lambda star is zero a.e.}]
The law of large numbers in Pierce expansions \cite[Theorem 16]{Sha86} states that $(d_n(x))^{1/n} \to e$ as $n \to \infty$ Lebesgue-almost everywhere on $[0,1]$. Thus, for Lebesgue-almost every $x \in [0,1]$, we have $(1/(d_n(x))^s)^{1/n} \to 1/e^s < 1$ as $n \to \infty$ for any $s>0$. Hence, the theorem follows from the root test.
\end{proof}

\begin{lemma} \label{basic results on lambda star}
For $\lambda^* \colon [0,1] \to [0, \infty]$, the following hold:
\begin{enumerate}[label=\upshape(\roman*), ref=\roman*, leftmargin=*, widest=ii]
\item \label{basic results on lambda star 1}
For any rational $x \in [0,1]$, we have $\lambda^* (x) = 0$.
\item \label{basic results on lambda star 2}
$\lambda^* ([0,1]) \subseteq [0,1]$.
\end{enumerate}
\end{lemma}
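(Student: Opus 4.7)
The plan is to reduce both parts of the lemma to the analogous statements for $\lambda \colon \Sigma \to [0,1]$ established in Lemma \ref{basic results on lambda}, via the factorization $\lambda^* = \lambda \circ f$ from \eqref{lambda star as composition}. This decouples the (easy) computation of the convergence exponent from the (slightly delicate) question of which element of $\Sigma$ a given $x \in [0,1]$ maps to under $f$.

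For part \eqref{basic results on lambda star 2}, the argument is a one-liner: given any $x \in [0,1]$, the point $f(x)$ lies in $\Sigma$, so $\lambda^*(x) = \lambda(f(x)) \in \lambda(\Sigma) \subseteq [0,1]$ by Lemma \ref{basic results on lambda}(\ref{basic results on lambda 2}). I would present this after part \eqref{basic results on lambda star 1} so the order matches the statement.

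For part \eqref{basic results on lambda star 1}, the key step is to observe that the Pierce expansion of a rational number terminates. More precisely, for $x \in [0,1] \cap \mathbb{Q}$, I need that $f(x) \in \bigcup_{n \in \mathbb{N}_0} \Sigma_n$. This is immediate from the definition of $f$ together with Proposition \ref{preimage of phi}: if $x = 0$ then $f(0) = (\infty,\infty,\dotsc) \in \Sigma_0$ by the convention \eqref{Pierce algorithm 1}; if $x = 1$ then $d_1(1) = 1$ and $T(1) = 0$, so $f(1) = (1,\infty,\infty,\dotsc) \in \Sigma_1$; and if $x \in (0,1) \cap \mathbb{Q}$, then Proposition \ref{preimage of phi}(\ref{preimage of phi 2}) directly asserts that $f(x) \in \Sigma_n$ for some $n \in \mathbb{N}$. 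Once this is in place, Lemma \ref{basic results on lambda}(\ref{basic results on lambda 1}) yields $\lambda^*(x) = \lambda(f(x)) = 0$.

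I do not anticipate a genuine obstacle here; the real content was already absorbed into Lemma \ref{basic results on lambda} (where the formula $\lambda(\sigma) = \limsup_{n \to \infty} \log n / \log \sigma_n$ and the convention $c/\infty = 0$ were used), and into the definition of $f$. The only point requiring a brief sentence is the handling of the boundary cases $x = 0$ and $x = 1$, since Proposition \ref{preimage of phi}(\ref{preimage of phi 2}) is stated only for $x \in (0,1) \cap \mathbb{Q}$; these I would dispatch directly from the definitions \eqref{Pierce algorithm 1}--\eqref{Pierce algorithm 2}.
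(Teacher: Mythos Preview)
Your proposal is correct and follows essentially the same route as the paper: both parts are reduced to Lemma \ref{basic results on lambda} via the factorization $\lambda^* = \lambda \circ f$ from \eqref{lambda star as composition}, together with Proposition \ref{preimage of phi} to place $f(x)$ in $\bigcup_{n \in \mathbb{N}_0} \Sigma_n$ for rational $x$. Your explicit treatment of the boundary cases $x=0$ and $x=1$ is a small elaboration over the paper's terser presentation, but the argument is the same.
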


\begin{proof}
Both parts follow from \eqref{lambda star as composition}, Proposition \ref{preimage of phi}, and Lemma \ref{basic results on lambda}. In fact, we have
\[
\lambda^*(x) = \lambda (f(x)) \in \lambda \left( \bigcup_{n \in \mathbb{N}_0} \Sigma_n \right) = \{ 0 \}
\]
for any rational $x \in [0,1]$, and
\[
\lambda^*([0,1]) = \lambda (f([0,1])) \subseteq \lambda (\Sigma) \subseteq [0,1],
\]
as desired.
\end{proof}

\begin{proof} [Proof of Theorem \ref{lambda star is surjective}]
Let $U$ be a non-empty open subset of $[0,1]$. Since $U \cap \mathbb{I}$ is open in $\mathbb{I}$ and $f|_{\mathbb{I}} \colon \mathbb{I} \to \Sigma_\infty$ is a homeomorphism (Proposition \ref{f is homeo}(\ref{f is homeo 3})), we have 
\[
\lambda^*(U \setminus \mathbb{Q}) = \lambda^*(U \cap \mathbb{I}) = (\lambda \circ f)(U \cap \mathbb{I}) = \lambda ( f|_{\mathbb{I}}(U \cap \mathbb{I}) ) = \lambda (V \cap \Sigma_\infty)
\]
for some non-empty open subset $V$ of $\Sigma$, where we used \eqref{lambda star as composition} for the second equality. But $\lambda (V \cap \Sigma_\infty) = [0,1]$ by Theorem \ref{neighborhood image theorem}. Thus, we obtain
\[
[0,1] \supseteq \lambda^*(U) \supseteq \lambda^*(U \cap \mathbb{I}) = [0,1],
\]
where the first inclusion follows from Lemma \ref{basic results on lambda star}(\ref{basic results on lambda star 1}). This finishes the proof.
\end{proof}

\begin{proof} [Proof of Corollary \ref{lambda star has intermediate value property}]
The corollary is immediate from Theorem \ref{lambda star is surjective}.
\end{proof}

\begin{proof} [Proof of Corollary \ref{lambda star discontinuous corollary}]
The corollary is another immediate consequence of Theorem \ref{lambda star is surjective}.
\end{proof}

\begin{proof} [Proof of Corollary \ref{lambda star not first Baire corollary}]
Since $\lambda^* \colon [0,1] \to [0,1]$ is discontinuous everywhere (Corollary \ref{lambda star discontinuous corollary}) on the complete metric space $[0,1]$, Proposition \ref{not first Baire proposition} implies that $\lambda^*$ is not of Baire class one.
\end{proof}

\begin{lemma} \label{PS formula on unit interval lemma}
For any $x \in [0,1]$, we have
\begin{align} \label{PS formula on unit interval}
\lambda^* (x) = \limsup_{n \to \infty} (\psi_n \circ f)(x) = \limsup_{n \to \infty} \frac{\log n}{\log d_n(x)}.
\end{align}
\end{lemma}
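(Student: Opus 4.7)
The plan is to reduce the identity to Lemma \ref{PS formula on Sigma lemma} via the commutative triangle \eqref{lambda star as composition}. First I would invoke \eqref{lambda star as composition} to rewrite $\lambda^*(x) = \lambda(f(x))$ for any $x \in [0,1]$, noting that $f(x) = (d_n(x))_{n \in \mathbb{N}} \in \Sigma$ by construction of the Pierce expansion digit sequence.

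Next I would apply Lemma \ref{PS formula on Sigma lemma} to the Pierce sequence $f(x)$, which yields
\[
\lambda(f(x)) = \limsup_{n \to \infty} \psi_n(f(x)).
\]
Finally, unfolding the definition \eqref{definition of psi n} of $\psi_n$ at the sequence $f(x) = (d_n(x))_{n \in \mathbb{N}}$ gives $\psi_n(f(x)) = \log n / \log d_n(x)$, which completes the identity.

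There is no real obstacle here: the lemma is essentially a translation of Lemma \ref{PS formula on Sigma lemma} from the sequence space $\Sigma$ to the unit interval $[0,1]$ through the encoding map $f$. The only mild point worth flagging is that the formula is valid for all $x \in [0,1]$, including rationals $x \in \mathbb{Q} \cap [0,1]$ where $f(x) \in \bigcup_{n \in \mathbb{N}_0} \Sigma_n$ has eventually $\infty$ entries; in that case both sides equal $0$ (by Lemma \ref{basic results on lambda}(\ref{basic results on lambda 1}) on the left and by the convention $c/\infty = 0$ on the right), so the statement continues to hold.
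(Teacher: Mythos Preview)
Your proposal is correct and follows essentially the same approach as the paper's own proof, which simply cites \eqref{lambda star as composition}, \eqref{definition of psi n}, and \eqref{PS formula on Sigma} (i.e., Lemma \ref{PS formula on Sigma lemma}) in a single sentence. Your additional remark about the rational case is accurate but already subsumed by Lemma \ref{PS formula on Sigma lemma}, since that lemma is stated for all $\sigma \in \Sigma$.
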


\begin{proof}
The desired formula follows on combining \eqref{lambda star as composition}, \eqref{definition of psi n}, and \eqref{PS formula on Sigma}.
\end{proof}

\begin{proof} [Proof of Theorem \ref{lambda star second Baire}]
We claim that
\[
\lambda^*(x) = \inf_{\upsilon \in \varphi^{-1}( \{ x \})} \left( \limsup_{n \to \infty} \psi_n (\upsilon) \right)
\]
for each $x \in [0,1]$. In fact, for each $x \in \mathbb{I} \cup \{ 0, 1 \}$, since $\varphi^{-1}(\{ x \}) = \{ f(x) \}$ by Proposition \ref{preimage of phi}(\ref{preimage of phi 1}), it follows that
\begin{align*}
\inf_{\upsilon \in \varphi^{-1}( \{ x \})} \left( \limsup_{n \to \infty} \psi_n (\upsilon) \right) 
&= \limsup_{n \to \infty} \psi_n (f(x)) = \lambda^*(x),
\end{align*}
where we used \eqref{PS formula on unit interval} for the second equality. Assume $x \in \mathbb{Q} \cap (0,1)$. By Proposition \ref{preimage of phi}(\ref{preimage of phi 2}), we know that $\varphi^{-1}(\{ x \})$ contains two elements, say $\sigma$ and $\tau$, both of which are in $\bigcup_{n \in \mathbb{N}} \Sigma_n$. Hence,
\begin{align*}
\inf_{\upsilon \in \varphi^{-1}( \{ x \})} \left( \limsup_{n \to \infty} \psi_n (\upsilon) \right)
&= \min \left\{ \limsup_{n \to \infty} \psi_n (\sigma), \limsup_{n \to \infty} \psi_n (\tau) \right\} \\
&= \min \{ \lambda (\sigma), \lambda (\tau) \}
= 0,
\end{align*}
where the second equality follows from \eqref{PS formula on Sigma} and the third from Lemma \ref{basic results on lambda}(\ref{basic results on lambda 1}). But, since $x$ is rational, we have $\lambda^*(x) = 0$ by Lemma \ref{basic results on lambda star}(\ref{basic results on lambda star 1}). This proves the claim. 

Recall that $\varphi \colon \Sigma \to [0,1]$ is continuous (Proposition \ref{f is homeo}(\ref{f is homeo 2})) and surjective (Proposition \ref{preimage of phi}), and that $\Sigma$ and $[0,1]$ are compact metric spaces (see Proposition \ref{Sigma is compact metric} for $\Sigma$). We know, by the definition \eqref{definition of psi n}, that each $\psi_n \colon \Sigma \to [0,1]$ is continuous, hence lower semicontinuous. Therefore, since $\lambda^*$ is real-valued by Lemma \ref{basic results on lambda star}(\ref{basic results on lambda star 2}), we conclude, in view of the claim above and Lemma \ref{main lemma}, that $\lambda^*$ is of Baire class two on $[0,1]$.
\end{proof}

\subsection{Proofs of results on the level sets of the convergence exponent}

The calculation method used in the proof of the following lemma is typical in fractal studies. See, e.g., \cite[Lemma 4.10]{Ahn23b} and \cite[Lemma 4.3]{SW21}.

\begin{lemma} \label{hidm upper bound lemma}
For each $\alpha, \beta \in (0,1]$ with $\alpha \leq \beta$, we have
\begin{align} \label{hidm upper bound formula}
\hdim \bigcup_{\gamma \in [\alpha, \beta]} L_\gamma^* \leq \beta ( \alpha^{-1} - 1 ).
\end{align}
\end{lemma}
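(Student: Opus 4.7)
The plan is a standard covering argument from fractal geometry: translate the hypothesis on the convergence exponent into explicit digit constraints via Lemma \ref{PS formula on unit interval lemma}, and then cover the resulting set by fundamental intervals whose $s$-diameter sum vanishes for $s$ above the claimed bound. Fix $s > \beta(\alpha^{-1} - 1)$; I shall show $\mathcal{H}^s(\bigcup_{\gamma \in [\alpha, \beta]} L_\gamma^*) = 0$. To do so, choose $\epsilon > 0$ small enough that $s > \tilde\beta(1/\alpha' - 1)$, where $\alpha' = \alpha - \epsilon$, $\beta' = \beta + \epsilon$, and $\tilde\beta = \min\{1, \beta'\}$.

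By Lemma \ref{PS formula on unit interval lemma}, $\lambda^*(x) \in [\alpha, \beta]$ entails, on the one hand, that $d_n(x) > n^{1/\beta'}$ for every $n$ exceeding some threshold $N = N(x)$, and on the other hand, that $d_m(x) < m^{1/\alpha'}$ for infinitely many $m$. Consequently
\[
\bigcup_{\gamma \in [\alpha, \beta]} L_\gamma^* \subseteq \bigcup_{N \in \mathbb{N}} \bigcap_{M \geq N} \bigcup_{m \geq M} C_m^N,
\]
where $C_m^N$ is the union of the fundamental intervals $I_\sigma$ indexed by $\sigma \in \Sigma_m$ satisfying $\sigma_n \geq n^{1/\beta'}$ for every $N \leq n \leq m$ together with $\sigma_m \leq m^{1/\alpha'}$. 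Since Hausdorff dimension is countably stable, it suffices to show $\mathcal{H}^s(\limsup_m C_m^N) = 0$ for each fixed $N$.

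To bound $\sum_{I_\sigma \subset C_m^N} (\diam I_\sigma)^s$ I would combine two ingredients. First, the strict monotonicity of a Pierce sequence gives $\sigma_n \geq n$, which together with $\sigma_n \geq n^{1/\beta'}$ (for $n \geq N$) yields $\sigma_n \geq n^{1/\tilde\beta}$; so by the diameter formula \eqref{diam I sigma}, $(\diam I_\sigma)^s \leq K_N \, (m!)^{-s/\tilde\beta}\, m^{-s/\tilde\beta}$ with a constant $K_N$ absorbing the early-index factors. Second, the admissible $\sigma$ form strictly increasing $m$-tuples in $\{1, \dotsc, \lfloor m^{1/\alpha'} \rfloor\}$, so there are at most $\binom{\lfloor m^{1/\alpha'} \rfloor}{m} \leq (e\, m^{1/\alpha' - 1})^m$ of them. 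Multiplying and applying Stirling's inequality $m! \geq (m/e)^m$ produces
\[
\sum_{I_\sigma \subset C_m^N} (\diam I_\sigma)^s \leq K_N \cdot e^{m(1 + s/\tilde\beta)} \cdot m^{m(1/\alpha' - 1 - s/\tilde\beta)} \cdot m^{-s/\tilde\beta}.
\]
The choice of $\epsilon$ forces $1/\alpha' - 1 - s/\tilde\beta < 0$, so the $m^{m(\cdot)}$ factor decays super-exponentially in $m$; hence the tail $\sum_{m \geq M}$ of the right-hand side tends to $0$ as $M \to \infty$, and the diameters occurring in the cover tend to zero as well. This yields $\mathcal{H}^s(\limsup_m C_m^N) = 0$, and letting $\epsilon \downarrow 0$ gives the desired bound $\hdim \bigcup_{\gamma \in [\alpha, \beta]} L_\gamma^* \leq \beta(\alpha^{-1} - 1)$.

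The most delicate step will be reconciling the two regimes $\beta' \leq 1$ and $\beta' > 1$: when $\beta' \leq 1$ the constraint $\sigma_n \geq n^{1/\beta'}$ strengthens the monotonicity bound $\sigma_n \geq n$, whereas when $\beta' > 1$ it is weaker and must be replaced by the latter. The quantity $\tilde\beta = \min\{1, \beta'\}$ is the bookkeeping device that renders the diameter estimate uniform in the two cases, and, because $\beta \in (0, 1]$, guarantees that $\tilde\beta(1/\alpha' - 1) \to \beta(\alpha^{-1} - 1)$ as $\epsilon \downarrow 0$. Once this is handled, the remainder is routine asymptotic analysis.
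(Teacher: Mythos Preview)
Your proposal is correct and follows essentially the same route as the paper: translate $\lambda^*(x)\in[\alpha,\beta]$ into digit constraints via Lemma~\ref{PS formula on unit interval lemma}, cover by fundamental intervals $I_\sigma$ with $\sigma\in\Sigma_m$ subject to $\sigma_j\geq j^{1/(\beta+\varepsilon)}$ and $\sigma_m\leq m^{1/(\alpha-\varepsilon)}$, bound the number of such $\sigma$ by a binomial coefficient, bound $\diam I_\sigma$ via \eqref{diam I sigma}, and show the $s$-sum of diameters vanishes when $s$ exceeds the claimed threshold. The only cosmetic differences are that the paper uses the ratio test rather than Stirling to verify convergence, and it does not introduce your $\tilde\beta=\min\{1,\beta'\}$ device: it simply uses the constraint $\sigma_j\geq j^{1/(\beta+\varepsilon)}$ for $j\geq N$ regardless of whether $\beta+\varepsilon$ exceeds $1$, obtaining the slightly looser intermediate bound $(\beta+\varepsilon)((\alpha-\varepsilon)^{-1}-1)$, which converges to the same limit as $\varepsilon\to0$. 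Your $\tilde\beta$ bookkeeping is a harmless refinement, not a necessity.
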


\begin{proof}
Fix $\alpha, \beta \in (0,1]$ such that $\alpha \leq \beta$. Let $\varepsilon \in (0, \alpha)$ be given. Put
\begin{align*}
B_N (\varepsilon) \coloneqq \bigcap_{m \geq N} \bigcup_{k \geq m} \{ x \in (0,1] : d_j(x) \geq j^{(\beta+\varepsilon)^{-1}} \text{ for all } &N \leq j \leq k, \\
&\text{ and } d_k(x) \leq k^{(\alpha-\varepsilon)^{-1}} \}
\end{align*}
and
\[
B(\varepsilon) \coloneqq \bigcup_{N \in \mathbb{N}} B_N(\varepsilon).
\]
Observe that $\bigcup_{\gamma \in [\alpha, \beta]} L_\gamma^* \subseteq B(\varepsilon)$. In fact, if $x \in \bigcup_{\gamma \in [\alpha, \beta]} L_\gamma^*$, then the following hold by \eqref{PS formula on unit interval}:
\begin{enumerate}[label=\upshape(\roman*), ref=\roman*, leftmargin=*, widest=ii]
\item
$\log j/\log d_j(x) \geq \alpha - \varepsilon$, or $d_j(x) \leq j^{(\alpha-\varepsilon)^{-1}}$, for infinitely many $j \in \mathbb{N}$.
\item
$\log j/\log d_j(x) \leq \beta + \varepsilon$, or $d_j(x) \geq j^{(\beta+\varepsilon)^{-1}}$, for all sufficiently large $j \in \mathbb{N}$.
\end{enumerate}
Hence, there exists $N \in \mathbb{N}$ such that for any $m \geq N$, we can find $k \geq m$ such that $d_j(x) \geq j^{(\beta+\varepsilon)^{-1}}$ for all $N \leq j \leq k$ and $d_k(x) \leq k^{(\alpha-\varepsilon)^{-1}}$. By monotonicity and countable stability of the Hausdorff dimension (see \cite[p.~48--49]{Fal14}), respectively, we have
\begin{align} \label{hdim L alpha star leq hdim B epsilon}
\hdim \bigcup_{\gamma \in [\alpha, \beta]} L_\gamma^* \leq \hdim B(\varepsilon) = \sup_{N \in \mathbb{N}} \hdim \{ B_N (\varepsilon) \}.
\end{align}

Let $N \in \mathbb{N}$ be fixed, and consider $B_N (\varepsilon)$. For each integer $k \geq N$, let
\[
\Lambda_k \coloneqq \{ (\sigma_j)_{j \in \mathbb{N}} \in \Sigma_k : \sigma_j \geq j^{(\beta+\varepsilon)^{-1}} \text{ for all } N \leq j \leq k \text{ and } \sigma_k \leq k^{(\alpha-\varepsilon)^{-1}} \}.
\]
Notice that, for any $x \in [0,1]$, we have the following equivalences:
\begin{align*}
&d_j(x) \geq j^{(\beta+\varepsilon)^{-1}} \text{ for all } N \leq j \leq k, \text{ and } d_k(x) \leq k^{(\alpha-\varepsilon)^{-1}} \\
&\hspace{2cm} \iff f(x) \in \Upsilon_\sigma \text{ for some } \sigma \in \Lambda_k \\
&\hspace{2cm} \iff x \in I_\sigma \text{ for some } \sigma \in \Lambda_k.
\end{align*}
Here, the second equivalence holds by the definition $I_\sigma = f^{-1}(\Upsilon_\sigma)$ in \eqref{definition of I sigma}. Then,
\begin{align} \label{equivalent definition of E ln rn}
B_N(\varepsilon) = \bigcap_{m \geq N} \bigcup_{k \geq m} \bigcup_{\sigma \in \Lambda_k} I_\sigma.
\end{align}
For each $\sigma \coloneqq (\sigma_j)_{j \in \mathbb{N}} \in \Lambda_k$, using the equation \eqref{diam I sigma} and the fact that $\sigma_n \geq n$ for each $n \in \mathbb{N}$, we obtain an upper bound for $\diam I_\sigma$ as follows:
\begin{align} \label{bound for diam I sigma}
\diam I_\sigma 
= \left( \prod_{j=1}^{k} \frac{1}{\sigma_j} \right) \frac{1}{\sigma_k+1} 
&\leq \frac{1}{(N-1)!} \frac{1}{[N(N+1) \dotsm k]^{(\beta+\varepsilon)^{-1}}(k^{(\beta+\varepsilon)^{-1}}+1)} \nonumber \\
&\leq \frac{1}{[N(N+1) \dotsm k]^{(\beta+\varepsilon)^{-1}}} 
= \left( \frac{(N-1)!}{k!} \right)^{(\beta+\varepsilon)^{-1}}.
\end{align}
Moreover, for each integer $k \geq N$, since any sequence in $\Lambda_k$ is strictly increasing, we have
\begin{align} \label{bound for card Lambda k}
| \Lambda_k |
\leq {\lfloor k^{(\alpha-\varepsilon)^{-1}} \rfloor \choose k} 
&= \frac{\lfloor k^{(\alpha-\varepsilon)^{-1}} \rfloor (\lfloor k^{(\alpha-\varepsilon)^{-1}} \rfloor-1) \dotsm (\lfloor k^{(\alpha-\varepsilon)^{-1}} \rfloor-(k-1))}{k!} \nonumber \\
&\leq \frac{k^{(\alpha-\varepsilon)^{-1}} (k^{(\alpha-\varepsilon)^{-1}}-1) \dotsm (k^{(\alpha-\varepsilon)^{-1}}-(k-1))}{k!} \nonumber \\
&\leq \frac{k^{k(\alpha-\varepsilon)^{-1}}}{k!}.
\end{align}

Now, let $s > (\beta+\varepsilon) ( (\alpha-\varepsilon)^{-1} - 1 )$. By \eqref{equivalent definition of E ln rn}, for each $m \geq N$, the collection $\bigcup_{k \geq m} \{ I_\sigma : \sigma \in \Lambda_k \}$ is a covering of $B_N(\varepsilon)$. Then, by using \eqref{bound for diam I sigma} and \eqref{bound for card Lambda k}, we find that
\begin{align*}
\mathcal{H}^s (B_N (\varepsilon))
&\leq \liminf_{m \to \infty} \sum_{k \geq m} \sum_{\sigma \in \Lambda_k} (\diam I_\sigma)^s \\
&\leq \liminf_{m \to \infty} \sum_{k \geq m} \left[ \frac{k^{k(\alpha-\varepsilon)^{-1}}}{k!} \left( \frac{(N-1)!}{k!} \right)^{s(\beta+\varepsilon)^{-1}} \right] \\
&= [(N-1)!]^{s(\beta+\varepsilon)^{-1}} \liminf_{m \to \infty} \sum_{k \geq m} \frac{k^{k(\alpha-\varepsilon)^{-1}}}{(k!)^{s(\beta+\varepsilon)^{-1}+1}}.
\end{align*}
Put $a_k \coloneqq k^{k(\alpha-\varepsilon)^{-1}} / (k!)^{s(\beta+\varepsilon)^{-1}+1}$ for each $k \in \mathbb{N}$. Then, since $s > (\beta+\varepsilon) ( (\alpha-\varepsilon)^{-1} - 1 )$, or, equivalently, $(\alpha-\varepsilon)^{-1} - s(\beta+\varepsilon)^{-1} - 1 < 0$, it follows that
\begin{align*}
\frac{a_{k+1}}{a_k}
&= {\dfrac{(k+1)^{k(\alpha-\varepsilon)^{-1}}(k+1)^{(\alpha-\varepsilon)^{-1}}}{(k!)^{s(\beta+\varepsilon)^{-1}+1}(k+1)^{s(\beta+\varepsilon)^{-1}+1}}} \bigg/ {\dfrac{k^{k(\alpha-\varepsilon)^{-1}}}{(k!)^{s(\beta+\varepsilon)^{-1}+1}}} \\
&= \left[ \left( 1 + \frac{1}{k} \right)^k \right]^{(\alpha-\varepsilon)^{-1}} (k+1)^{(\alpha-\varepsilon)^{-1} - s(\beta+\varepsilon)^{-1}-1}
\to e^{(\alpha-\varepsilon)^{-1}} \cdot 0 = 0
\end{align*}
as $k \to \infty$. This tells us that the series $\sum_{k \in \mathbb{N}} a_k$ is convergent, and so $\sum_{k \geq m} a_k \to 0$ as $m \to \infty$. Therefore, $\mathcal{H}^s (B_N(\varepsilon)) = 0$, which implies $\hdim B_N(\varepsilon) \leq s$ by Definition \ref{hdim definition}. Since $s \in \left( (\beta+\varepsilon) ( (\alpha-\varepsilon)^{-1} - 1 ), \infty \right)$ was arbitrary, it follows that $\hdim B_N(\varepsilon) \leq (\beta+\varepsilon) ( (\alpha-\varepsilon)^{-1} - 1 )$. But $N \in \mathbb{N}$ was arbitrary as well, so that \eqref{hdim L alpha star leq hdim B epsilon} gives
\[
\hdim \bigcup_{\gamma \in [\alpha, \beta]} L_\gamma^* \leq (\beta+\varepsilon) ( (\alpha-\varepsilon)^{-1} - 1 ).
\]
Finally, by letting $\varepsilon \to 0^+$, we deduce that $\hdim \bigcup_{\gamma \in [\alpha, \beta]} L_\gamma^* \leq \beta ( \alpha^{-1} - 1 )$, as was to be shown.
\end{proof}

The following lemma and its proof are inspired by \cite[Theorem 4.2]{SW21}.

\begin{lemma} \label{hdim formula lemma}
For each $\alpha, \beta \in (0,1]$ with $\alpha \leq \beta$, we have
\begin{align} \label{hdim formula}
\hdim \bigcup_{\gamma \in [\alpha, \beta]} L_\gamma^*  = \hdim L_\alpha^* = 1 - \alpha.
\end{align}
\end{lemma}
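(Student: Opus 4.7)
The plan is to prove the chain
\[
1 - \alpha \;\leq\; \hdim L_\alpha^* \;\leq\; \hdim \bigcup_{\gamma \in [\alpha, \beta]} L_\gamma^* \;\leq\; 1 - \alpha,
\]
which forces equality throughout. The left and middle inequalities are routine; the real content is the right inequality, which requires a refinement of Lemma \ref{hidm upper bound lemma}.

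For the lower bound I would invoke Proposition \ref{hdim E alpha}: $\hdim E(\alpha) = 1 - \alpha$. Since any $x \in E(\alpha)$ satisfies $\lim_{n \to \infty} \log n / \log d_n(x) = \alpha$, its limit superior also equals $\alpha$, so Lemma \ref{PS formula on unit interval lemma} yields $\lambda^*(x) = \alpha$, i.e., $x \in L_\alpha^*$. Hence $E(\alpha) \subseteq L_\alpha^* \subseteq \bigcup_{\gamma \in [\alpha, \beta]} L_\gamma^*$, and monotonicity of Hausdorff dimension gives $1 - \alpha \leq \hdim L_\alpha^* \leq \hdim \bigcup_{\gamma \in [\alpha, \beta]} L_\gamma^*$.

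For the upper bound, observe that a direct application of Lemma \ref{hidm upper bound lemma} to $[\alpha, \beta]$ only yields the bound $\beta(\alpha^{-1} - 1)$, which generally exceeds $1 - \alpha$. The key move is to subdivide $[\alpha, \beta]$ into a \emph{geometric} partition whose ratio approaches one. Fixing $\varepsilon > 0$, let $n$ be the smallest positive integer with $\alpha(1 + \varepsilon)^n \geq \beta$, and set $\alpha_i \coloneqq \alpha(1 + \varepsilon)^i$ for $0 \leq i < n$ and $\alpha_n \coloneqq \beta$; then $\alpha = \alpha_0 < \alpha_1 < \cdots < \alpha_n = \beta$, $\alpha_{i+1}/\alpha_i \leq 1 + \varepsilon$, and $\alpha_{i+1} \geq \alpha_1 = \alpha(1 + \varepsilon)$ for each $i$. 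Lemma \ref{hidm upper bound lemma} applied to each pair $(\alpha_i, \alpha_{i+1})$ gives
\[
\hdim \bigcup_{\gamma \in [\alpha_i, \alpha_{i+1}]} L_\gamma^* \;\leq\; \frac{\alpha_{i+1}}{\alpha_i} - \alpha_{i+1} \;\leq\; (1 + \varepsilon) - \alpha(1 + \varepsilon) \;=\; (1 + \varepsilon)(1 - \alpha),
\]
uniformly in $i$. Since Hausdorff dimension is finitely stable and $\bigcup_{\gamma \in [\alpha, \beta]} L_\gamma^* = \bigcup_{i=0}^{n-1} \bigcup_{\gamma \in [\alpha_i, \alpha_{i+1}]} L_\gamma^*$, I obtain $\hdim \bigcup_{\gamma \in [\alpha, \beta]} L_\gamma^* \leq (1 + \varepsilon)(1 - \alpha)$, and letting $\varepsilon \to 0^+$ concludes.

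The main obstacle is conceptual rather than technical: recognising that Lemma \ref{hidm upper bound lemma} is too crude on long intervals but is asymptotically sharp at the left endpoint, so that a geometric (not arithmetic) refinement is needed to extract the tight bound $1 - \alpha$. Once this refinement is in place, the argument is essentially a one-line assembly of the cited results.
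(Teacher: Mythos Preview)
Your argument is correct and follows the same overall strategy as the paper---lower bound via $E(\alpha) \subseteq L_\alpha^*$ and Proposition~\ref{hdim E alpha}, upper bound by partitioning $[\alpha,\beta]$ and applying Lemma~\ref{hidm upper bound lemma} on each piece---but the paper uses an \emph{arithmetic} partition $\alpha_j = \alpha + j(\beta-\alpha)/n$, not a geometric one. It then checks algebraically that $j \mapsto \alpha_{j+1}(\alpha_j^{-1}-1)$ is strictly decreasing, so the maximum over the subintervals is attained at $j=0$, namely $(\alpha + (\beta-\alpha)/n)(\alpha^{-1}-1) \to 1-\alpha$ as $n \to \infty$. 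Thus your remark that a geometric refinement is \emph{needed} is not accurate; either scheme works. Your geometric partition buys a uniform bound $(1+\varepsilon)(1-\alpha)$ without any monotonicity computation, while the paper's arithmetic partition trades that for a two-line algebraic identity. One small wrinkle in your write-up: when $n=1$ (i.e., $\beta \leq \alpha(1+\varepsilon)$) your claim ``$\alpha_1 = \alpha(1+\varepsilon)$'' fails, since by your own definition $\alpha_1 = \beta$; however the desired bound still holds directly via $\beta(\alpha^{-1}-1) \leq \alpha(1+\varepsilon)(\alpha^{-1}-1) = (1+\varepsilon)(1-\alpha)$, so the argument survives.
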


\begin{proof}
Let $\alpha, \beta \in (0,1]$ with $\alpha \leq \beta$. It is clear from the definitions and \eqref{PS formula on unit interval} that $E(\alpha) \subseteq L_\alpha^* \subseteq \bigcup_{\gamma \in [\alpha, \beta]} L_\gamma^*$, where $E(\alpha)$ is defined as in Proposition \ref{hdim E alpha}. Then, Proposition \ref{hdim E alpha}, in conjunction with monotonicity of the Hausdorff dimension (see \cite[p.~48]{Fal14}), implies that 
\begin{align} \label{auxiliary inequalities}
1 - \alpha = \hdim E(\alpha) \leq \hdim L_\alpha^* \leq \hdim \bigcup_{\gamma \in [\alpha, \beta]} L_\gamma^*.
\end{align}
If $\alpha = \beta$, then, by \eqref{hidm upper bound formula}, the rightmost term in \eqref{auxiliary inequalities} is bounded above by $\alpha (\alpha^{-1}-1) = 1-\alpha$, and thus \eqref{hdim formula} holds true.

Now, assume $\alpha < \beta$. Let $n \in \mathbb{N}$. Put $\Delta \coloneqq (\beta-\alpha)/n > 0$. Write
\[
\bigcup_{\gamma \in [\alpha, \beta]} L_\gamma^* = \bigcup_{j=0}^{n-1} \bigcup_{\gamma \in [\alpha_j, \alpha_{j+1}]} L_\gamma^*, \quad \text{where } \alpha_j \coloneqq \alpha + j \Delta \text{ for } j \in \{ 0, 1, \dotsc, n \}.
\]
Then, by countable stability of the Hausdorff dimension (see \cite[p.~48]{Fal14}) and \eqref{hidm upper bound formula}, we have
\begin{align*}
\hdim \bigcup_{\gamma \in [\alpha, \beta]} L_\gamma^* 
&= \max_{0 \leq j \leq n-1} \left\{ \hdim \bigcup_{\gamma \in [\alpha_j, \alpha_{j+1}]} L_\gamma^* \right\} 
\leq \max_{0 \leq j \leq n-1} \left\{ \alpha_{j+1} ( \alpha_j^{-1} - 1) \right\}.
\end{align*}
Observe that for each $j \in \{ 0, \dotsc, n-2 \}$, we have
\begin{align*}
\alpha_{j+1} ( \alpha_{j}^{-1}-1 ) - \alpha_{j+2} ( \alpha_{j+1}^{-1} - 1 )
&= \frac{\alpha_{j+1}^2 - (\alpha_{j+1} - \Delta) (\alpha_{j+1} + \Delta)}{\alpha_j \alpha_{j+1}} + \alpha_{j+2}-\alpha_{j+1} \\
&= \frac{\Delta^2}{\alpha_j \alpha_{j+1}} + \Delta > 0.
\end{align*}
It follows that
\[
\hdim \bigcup_{\gamma \in [\alpha, \beta]} L_\gamma^* \leq \alpha_1 ( \alpha_0^{-1} - 1 ) = \left( \alpha + \frac{\beta-\alpha}{n} \right) ( \alpha^{-1} - 1 ).
\]
Notice that the leftmost term of the equation above is independent of $n$. Therefore, by letting $n \to \infty$, we find that
\[
\hdim \bigcup_{\gamma \in [\alpha, \beta]} L_\gamma^* \leq (\alpha + 0) ( \alpha^{-1} - 1 ) = 1 - \alpha.
\]
On combining the above inequality with \eqref{auxiliary inequalities}, we conclude that \eqref{hdim formula} holds true.
\end{proof}

\begin{proof} [Proof of Theorem \ref{hdim L alpha star theorem}]
As we mentioned in the paragraph preceding the statement of the theorem, we have $\hdim L_0^* = 1$. On the other hand, for each $\alpha \in (0,1]$, we have $\hdim L_\alpha^* = 1 - \alpha$ from Lemma \ref{hdim formula lemma}. Hence the theorem.
\end{proof}

\begin{lemma} \label{L alpha is f image of L alpha star}
Let $\alpha \in [0,1]$. For the $\alpha$-level sets $L_\alpha$ and $L_\alpha^*$, we have $L_\alpha \cap \Sigma_\infty = f|_{\mathbb{I}} (L_\alpha^* \cap \mathbb{I})$, or, equivalently, $L_\alpha^* \cap \mathbb{I} = \varphi|_{\Sigma_\infty} (L_\alpha \cap \Sigma_\infty)$. In particular, if $\alpha \neq 0$, then $L_\alpha = f|_{\mathbb{I}} (L_\alpha^*)$, or, equivalently, $L_\alpha^* = \varphi|_{\Sigma_\infty} (L_\alpha)$.
\end{lemma}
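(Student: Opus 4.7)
The plan is to exploit the commutative diagram $\lambda^* = \lambda \circ f$ from \eqref{lambda star as composition} together with the fact that $f|_{\mathbb{I}} \colon \mathbb{I} \to \Sigma_\infty$ is a bijection with inverse $\varphi|_{\Sigma_\infty}$ (Proposition \ref{f is homeo}(\ref{f is homeo 3})). This reduces the lemma to a direct set-chasing argument, so no real obstacle is expected.

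First, I would establish the general identity $L_\alpha \cap \Sigma_\infty = f|_{\mathbb{I}}(L_\alpha^* \cap \mathbb{I})$ for arbitrary $\alpha \in [0,1]$ by double inclusion. For $(\subseteq)$: given $\sigma \in L_\alpha \cap \Sigma_\infty$, set $x \coloneqq \varphi(\sigma) \in \mathbb{I}$ (the image lies in $\mathbb{I}$ because $\varphi|_{\Sigma_\infty}$ maps into $\mathbb{I}$). Then $f(x) = \sigma$, and by \eqref{lambda star as composition} one has $\lambda^*(x) = \lambda(f(x)) = \lambda(\sigma) = \alpha$, so $x \in L_\alpha^* \cap \mathbb{I}$ and hence $\sigma \in f|_{\mathbb{I}}(L_\alpha^* \cap \mathbb{I})$. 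For $(\supseteq)$: given $x \in L_\alpha^* \cap \mathbb{I}$, we have $f(x) \in \Sigma_\infty$ (again by the homeomorphism), and $\lambda(f(x)) = \lambda^*(x) = \alpha$, so $f(x) \in L_\alpha \cap \Sigma_\infty$.

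Next, I would convert this to the equivalent form $L_\alpha^* \cap \mathbb{I} = \varphi|_{\Sigma_\infty}(L_\alpha \cap \Sigma_\infty)$ by simply applying $\varphi|_{\Sigma_\infty}$ to both sides of the previous identity and invoking the bijectivity of $f|_{\mathbb{I}}$.

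Finally, for the ``in particular'' clause, assume $\alpha \in (0,1]$. By Lemma \ref{basic results on lambda}(\ref{basic results on lambda 1}), $\lambda$ vanishes on $\bigcup_{n \in \mathbb{N}_0} \Sigma_n = \Sigma \setminus \Sigma_\infty$, so no $\sigma \in \Sigma \setminus \Sigma_\infty$ belongs to $L_\alpha$; hence $L_\alpha \subseteq \Sigma_\infty$, i.e., $L_\alpha = L_\alpha \cap \Sigma_\infty$. Analogously, by Lemma \ref{basic results on lambda star}(\ref{basic results on lambda star 1}), $\lambda^*$ vanishes on $[0,1] \cap \mathbb{Q}$, so $L_\alpha^* \subseteq \mathbb{I}$, i.e., $L_\alpha^* = L_\alpha^* \cap \mathbb{I}$. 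Substituting these two equalities into the general identity from the first step yields $L_\alpha = f|_{\mathbb{I}}(L_\alpha^*)$ and, equivalently, $L_\alpha^* = \varphi|_{\Sigma_\infty}(L_\alpha)$, completing the proof.
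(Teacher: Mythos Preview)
Your proposal is correct and follows essentially the same approach as the paper's proof: both establish the identity $L_\alpha \cap \Sigma_\infty = f|_{\mathbb{I}}(L_\alpha^* \cap \mathbb{I})$ by a double-inclusion argument using the relation $\lambda^* = \lambda \circ f$ and the bijection $f|_{\mathbb{I}} \colon \mathbb{I} \to \Sigma_\infty$, and then obtain the ``in particular'' clause for $\alpha \neq 0$ by invoking Lemma~\ref{basic results on lambda}(\ref{basic results on lambda 1}) and Lemma~\ref{basic results on lambda star}(\ref{basic results on lambda star 1}) to drop the intersections with $\Sigma_\infty$ and $\mathbb{I}$.
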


\begin{proof}
Throughout the proof, we frequently make use of the fact that $f|_{\mathbb{I}} \colon \mathbb{I} \to \Sigma_\infty$ is a homeomorphism with the continuous inverse $\varphi|_{\Sigma_\infty} \colon \Sigma_\infty \to \mathbb{I}$ (Proposition \ref{f is homeo}(\ref{f is homeo 3})).

In the first assertion, since $L_\alpha \cap \Sigma_\infty \subseteq \Sigma_\infty$ and $L_\alpha^* \cap \mathbb{I} \subseteq \mathbb{I}$, the equivalence part is clear. We only need to show that $L_\alpha \cap \Sigma_\infty = f|_{\mathbb{I}} (L_\alpha^* \cap \mathbb{I})$.

Suppose $\sigma \in L_\alpha \cap \Sigma_\infty$. Then, $\lambda (\sigma) = \alpha$ by definition of $L_\alpha$. Since $\sigma \in \Sigma_\infty$, we can find a unique $x \in \mathbb{I}$ such that $f|_{\mathbb{I}}(x) = \sigma$. By using \eqref{lambda star as composition}, we find that $\lambda^*(x) = (\lambda \circ f)(x) = \lambda (\sigma) = \alpha$. Thus, $x \in L_\alpha^* \cap \mathbb{I}$, and so, $\sigma = f|_{\mathbb{I}}(x) \in f|_{\mathbb{I}}(L_\alpha^* \cap \mathbb{I})$. This proves the inclusion $L_\alpha \cap \Sigma_\infty \subseteq f|_{\mathbb{I}}(L_\alpha^* \cap \mathbb{I})$.

Conversely, suppose $\sigma \in f|_{\mathbb{I}}(L_\alpha^* \cap \mathbb{I})$. Then, $\sigma \in \Sigma_\infty$, and so, there is a unique $x \in \mathbb{I}$ such that $\sigma = f|_{\mathbb{I}} (x)$. Now, we have $x = \varphi|_{\Sigma_\infty} (\sigma) \in L_\alpha^* \cap \mathbb{I}$, so that $\lambda^*(x) = \alpha$. It follows from \eqref{lambda star as composition} that $\lambda (\sigma) = \lambda (f(x)) = \lambda^*(x) = \alpha$. Thus, $\sigma \in L_\alpha \cap \Sigma_\infty$, and this proves the other inclusion $L_\alpha \cap \Sigma_\infty \supseteq f|_{\mathbb{I}}(L_\alpha^* \cap \mathbb{I})$.

The second assertion follows by noting that whenever $\alpha \in (0,1]$, we have $L_\alpha \cap \Sigma_\infty = L_\alpha$ and $L_\alpha^* \cap \mathbb{I} = L_\alpha^*$ by Lemmas \ref{basic results on lambda}(\ref{basic results on lambda 1}) and \ref{basic results on lambda star}(\ref{basic results on lambda star 1}), respectively.
\end{proof}

\begin{proof} [Proof of Theorem \ref{L alpha star dense G delta theorem}]
The fact that the $\alpha$-level set $L_\alpha^*$ is dense in $[0,1]$ for each $\alpha \in [0,1]$ immediately follows from Theorem \ref{lambda star is surjective}.

Next, recall that $L_1^* = \varphi|_{\Sigma_\infty} (L_1)$ by Lemma \ref{L alpha is f image of L alpha star} and that $L_1$ is $G_\delta$ in $\Sigma_\infty$ by Theorem \ref{L alpha dense G delta theorem}. But $\varphi|_{\Sigma_\infty} \colon \Sigma_\infty \to \mathbb{I}$ is a homeomorphism by Proposition \ref{f is homeo}(\ref{f is homeo 3}), so that $L_1^*$ is $G_\delta$ in $\mathbb{I}$. Since $\mathbb{I} = \bigcap_{q \in [0,1] \cap \mathbb{Q}} ([0,1] \setminus \{ q \})$ is $G_\delta$ in $[0,1]$, we conclude that $L_1^*$ is $G_\delta$ in $[0,1]$.

Now, we write the complete metric space $[0,1]$ as a disjoint union of the collection $\{ L_\alpha^* : \alpha \in [0,1] \}$. Due to the preceding two paragraphs, we know that $L_1^*$ is dense $G_\delta$ in $[0,1]$. Then, Lemma \ref{at most one G delta level set} tells us that $L_1^*$ is the unique dense $G_\delta$ element in the collection $\{ L_\alpha^* : \alpha \in [0,1] \}$. Thus, for each $\alpha \in [0,1)$, since $L_\alpha^*$ is dense in $[0,1]$ by the first paragraph of this proof, we find that $L_\alpha^*$ fails to be $G_\delta$ in $[0,1]$.

Lastly, we verify the second statement of the theorem. Since $L_1^*$ is dense $G_\delta$ in the complete metric space $[0,1]$, Fact \ref{basic facts}(\ref{basic facts 1}) tells us that $L_1^*$ is comeager and nonmeager in $[0,1]$. Then, for each $\alpha \in [0,1)$, the $\alpha$-level set $L_\alpha^*$ is a subset of $[0,1] \setminus L_1^*$, which is meager in $[0,1]$, and thus, $L_\alpha^*$ is meager in $[0,1]$ as well. This completes the proof of the theorem.
\end{proof}

\begin{proof} [Proof of Corollary \ref{bdim L alpha star corollary}]
For each $\alpha \in [0,1]$, since $L_\alpha^*$ is dense in $[0,1]$ by Theorem \ref{L alpha star dense G delta theorem}, we deduce that $\bdim L_\alpha^*=1$ in view of Proposition \ref{dense bdim proposition}.
\end{proof}

\begin{proof} [Proof of Theorem \ref{L alpha star is uncountable}]
Let $\alpha \in [0,1]$. On one hand, we have
\[
\mathfrak{c} = | L_\alpha  \cap \Sigma_\infty | = | \varphi|_{\Sigma_\infty} (L_\alpha \cap \Sigma_\infty)| = | L_\alpha^* \cap \mathbb{I} | \leq |L_\alpha^*|,
\]
where the first equality follows from Lemma \ref{cardinality of L alpha lemma}, the second from Proposition \ref{f is homeo}(\ref{f is homeo 3}), the third from Lemma \ref{L alpha is f image of L alpha star}, and the inequality from the inclusion $L_\alpha^*\cap \mathbb{I} \subseteq L_\alpha^*$. 
On the other hand, due to the inclusion $L_\alpha^* \subseteq [0,1]$ and the classical fact $|[0,1]| = \mathfrak{c}$, we have
\[
|L_\alpha^*| \leq | [0,1] | = \mathfrak{c}.
\]
Therefore, we infer from the Cantor-Schr{\"o}der-Bernstein theorem that $|L_\alpha^*| = \mathfrak{c}$.
\end{proof}

\subsection{Proofs of results on the series of positive $s$th powers of the reciprocals of the digits}

\begin{lemma} \label{relation of Theta and L alpha star}
For each $s \in (0,1]$, we have
\[
\bigcup_{\alpha \in (s,1]} L_\alpha^* \subseteq \mathcal{D}^{(s)}_{\Div} \subseteq \bigcup_{\alpha \in [s,1]} L_\alpha^* \subseteq \mathbb{I}.
\]
\end{lemma}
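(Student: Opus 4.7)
The plan is to prove the three inclusions directly from the definitions, using only the monotonicity of $s \mapsto \sum_n 1/(d_n(x))^s$ in the parameter and the basic properties of $\lambda^*$ already collected in Lemma \ref{basic results on lambda star}.

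First I would record the key monotonicity observation: for any fixed $x \in [0,1]$, since $d_n(x) \in \mathbb{N}_\infty$ so that $d_n(x) \geq 1$ for every $n \in \mathbb{N}$ (with the convention $1/\infty^s = 0$), the function $s \mapsto \sum_{n \in \mathbb{N}} 1/(d_n(x))^s$ is non-increasing on $(0,\infty)$. Consequently, the set $A(x) \coloneqq \{ t > 0 : \sum_n 1/(d_n(x))^t \text{ converges} \}$ is either empty or upward-closed, so that $s \in A(x)$ whenever $s > \lambda^*(x) = \inf A(x)$, and $s \notin A(x)$ whenever $s < \lambda^*(x)$.

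For the first inclusion $\bigcup_{\alpha \in (s,1]} L_\alpha^* \subseteq \mathcal{D}^{(s)}_{\Div}$, if $x \in L_\alpha^*$ with $\alpha > s$, then $s < \lambda^*(x)$, so $s \notin A(x)$, meaning the defining series of $\mathcal{D}^{(s)}_{\Div}$ diverges at $x$. For the middle inclusion $\mathcal{D}^{(s)}_{\Div} \subseteq \bigcup_{\alpha \in [s,1]} L_\alpha^*$, if the series at exponent $s$ diverges at $x$, then $s \notin A(x)$, which forces $\lambda^*(x) \geq s$; combined with $\lambda^*(x) \leq 1$ from Lemma \ref{basic results on lambda star}(\ref{basic results on lambda star 2}), this yields $\lambda^*(x) \in [s,1]$, i.e., $x$ lies in $L_{\lambda^*(x)}^* \subseteq \bigcup_{\alpha \in [s,1]} L_\alpha^*$. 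For the last inclusion $\bigcup_{\alpha \in [s,1]} L_\alpha^* \subseteq \mathbb{I}$, any such $x$ satisfies $\lambda^*(x) \geq s > 0$, but Lemma \ref{basic results on lambda star}(\ref{basic results on lambda star 1}) tells us $\lambda^*(q) = 0$ for every rational $q \in [0,1]$, so $x$ must be irrational.

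This is a straightforward unfolding of the infimum definition, and I do not foresee any genuine obstacle. The only point requiring any care at all is the upward-closedness of $A(x)$, which rests on the trivial inequality $1/d^s \leq 1/d^t$ for $d \geq 1$ and $s \geq t$ (extended by $1/\infty^s = 0$); once that is noted, the three set-theoretic containments are immediate.
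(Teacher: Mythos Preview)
Your proof is correct and follows essentially the same approach as the paper's: both derive the three inclusions directly from the definition of $\lambda^*$ together with Lemma~\ref{basic results on lambda star}(\ref{basic results on lambda star 1}) and (\ref{basic results on lambda star 2}). You are slightly more explicit in spelling out why $s < \lambda^*(x)$ forces divergence (via the upward-closedness of $A(x)$), whereas the paper simply appeals to the definition of the convergence exponent, but this is the same argument.
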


\begin{proof}
If $s = 1$, the first inclusion is evident since the union on the left is empty. Fix $s \in (0,1)$. Let $\alpha \in (s,1]$ be arbitrary. Observe, by definition \eqref{definition of lambda star} of the convergence exponent, that if $\lambda^*(x) = \alpha$ for some $x \in [0,1]$, then the series $\sum_{n \in \mathbb{N}} 1/(d_n(x))^s$ is divergent, i.e., $x \in \mathcal{D}^{(s)}_{\Div}$ by the definition \eqref{definition of Theta}. This proves the first inclusion.

Let $s \in (0,1]$. Suppose $x \in \mathcal{D}^{(s)}_{\Div}$. Then, the series $\sum_{n \in \mathbb{N}} 1/(d_n(x))^s$ diverges by the definition \eqref{definition of Theta}, and so $\lambda^*(x) \geq s$ by the definition \eqref{definition of lambda star}. But $\lambda^*(x) \leq 1$ by Lemma \ref{basic results on lambda star}(\ref{basic results on lambda star 2}). Hence, $\lambda^*(x) \in [s,1]$, or, equivalently, $x \in \bigcup_{\alpha \in [s,1]} L_\alpha^*$. This proves the second inclusion.

For the last inclusion, it is enough to note that $L_\alpha^* \subseteq \mathbb{I}$ for each $\alpha \in (0,1]$ by Lemma \ref{basic results on lambda star}(\ref{basic results on lambda star 1}).
\end{proof}

\begin{proof} [Proof of Theorem \ref{Theta Lebesgue measure theorem}]
Let $s \in (0,1]$. Then, by Lemma \ref{relation of Theta and L alpha star} and the definition of the $\alpha$-level set $L_\alpha^*$, we have
\[
\mathcal{D}^{(s)}_{\Div} \subseteq \bigcup_{\alpha \in [s,1]} L_\alpha^* \subseteq [0,1] \setminus L_0^*.
\]
But $L_0^*$ if of full Lebesgue measure on $[0,1]$ by Theorem \ref{lambda star is zero a.e.}. Therefore, $\mathcal{D}^{(s)}_{\Div}$ has Lebesgue measure zero.
\end{proof}

\begin{proof} [Proof of Theorem \ref{Theta hdim theorem}]
Note first that $\mathcal{D}_{\Div}^{(1)} \subseteq L_1^*$ by Lemma \ref{relation of Theta and L alpha star}. But $\hdim L_1^* = 1 - 1 = 0$ by Theorem \ref{hdim L alpha star theorem}, and therefore, by monotonicity of the Hausdorff dimension (see \cite[p.~48]{Fal14}), we conclude that $\hdim \mathcal{D}_{\Div}^{(1)} = 0$.

Now, fix $s \in (0,1)$. Let $t \in (s,1)$ be arbitrary. Then, by Lemma \ref{relation of Theta and L alpha star}, we have the following inclusions:
\[
L_{t}^* \subseteq \mathcal{D}^{(s)}_{\Div} \subseteq \bigcup_{\alpha \in [s,1]} L_\alpha^*.
\]
But then,
\[
1 - t = \hdim L_{t}^* \leq \hdim \mathcal{D}^{(s)}_{\Div} \leq \hdim \bigcup_{\alpha \in [s,1]} L_\alpha^* = 1 - s,
\]
where we used monotonicity of the Hausdorff dimension (see \cite[p.~48]{Fal14}) for both inequalities and \eqref{hdim formula} for both equalities. On letting $t \to s^+$, we conclude that $\hdim \mathcal{D}^{(s)}_{\Div} = 1-s$, as required.
\end{proof}

\begin{proof}[Proof of Theorem \ref{Theta dense G delta theorem}]
Let $s \in (0,1]$. We make use of the fact that $\Sigma_\infty$ and $\mathbb{I}$ are homeomorphic via the homeomorphism $\varphi|_{\Sigma_\infty} \colon \Sigma_\infty \to \mathbb{I}$ (Proposition \ref{f is homeo}(\ref{f is homeo 3})). Put
\[
L_\infty (\Psi^{(s)}) \coloneqq \{ \sigma \in \Sigma : \Psi^{(s)} (\sigma) = \infty \},
\]
as in Lemma \ref{density lemma 1}. Since $\mathcal{D}^{(s)}_{\Div} \subseteq \mathbb{I}$ by Lemma \ref{relation of Theta and L alpha star}, we have
\begin{align*}
\mathcal{D}^{(s)}_{\Div}
&= \left\{ x \in \mathbb{I} : \sum_{n \in \mathbb{N}} \frac{1}{(d_n(x))^s} \text{ diverges} \right\} 
= \{ x \in \mathbb{I} : \Psi^{(s)} (f(x)) = \infty \} \\
&= \{ \varphi|_{\Sigma_{\infty}} (\sigma) \in \mathbb{I} :  \Psi^{(s)} (\sigma) = \infty \}
= \varphi|_{\Sigma_\infty} (L_\infty (\Psi^{(s)})).
\end{align*}
But we know that $L_\infty (\Psi^{(s)})$ is dense $G_\delta$ in $\Sigma_\infty$ by Lemma \ref{density lemma 1}. Then, the denseness and $G_\delta$-ness of $\mathcal{D}^{(s)}_{\Div}$ in $\mathbb{I}$ follow by the homeomorphism $\varphi|_{\Sigma_\infty} \colon \Sigma_\infty \to \mathbb{I}$. Clearly, $\mathbb{I} = \bigcap_{q \in \mathbb{Q} \cap [0,1]} ([0,1] \setminus \{ q \})$ is dense $G_\delta$ in $[0,1]$. Therefore, $\mathcal{D}^{(s)}_{\Div}$ is dense $G_\delta$ in $[0,1]$; consequently, in view of Fact \ref{basic facts}(\ref{basic facts 1}), we conclude that $\Theta^{(s)}$ is comeager and nonmeager in $[0,1]$; consequently, in view of Fact \ref{basic facts}(1), we conclude that $\mathcal{D}^{(s)}_{\Div}$ is comeager and nonmeager in $[0,1]$.

Now, since $[0,1]$ clearly has no isolated points, Fact \ref{basic facts}(\ref{basic facts 2}) tells us that $\mathcal{D}^{(s)}_{\Div}$ is uncountable. Therefore, in view of Proposition \ref{cardinality of uncoutable Polish}, since $[0,1]$ is Polish, we infer that its uncountable $G_\delta$ subset $\mathcal{D}^{(s)}_{\Div}$ has cardinality $\mathfrak{c}$.
\end{proof}

\begin{proof} [Proof of Corollary \ref{bdim Theta corollary}]
Let $s \in (0,1]$. By Theorem \ref{Theta dense G delta theorem}, we know that $\mathcal{D}^{(s)}_{\Div}$ is dense in $[0,1]$. Therefore, $\bdim \mathcal{D}^{(s)}_{\Div} = 1$ by Proposition \ref{dense bdim proposition}.
\end{proof}

Recall the definitions of two sets $\mathcal{D}_{\Div}$ and $\mathcal{T}_{\Div}$ given in \eqref{definition of E and F}.

\begin{lemma} \label{E and F are equal}
We have $\mathcal{D}_{\Div} = \mathcal{T}_{\Div}$.
\end{lemma}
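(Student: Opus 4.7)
The plan is to compare $T^n(x)$ with $1/d_{n+1}(x)$ term by term. The key input is the Pierce-algorithm identity $d_1(y) = \lfloor 1/y \rfloor$ applied to $y = T^n(x)$, which yields $d_{n+1}(x) \leq 1/T^n(x) < d_{n+1}(x)+1$ whenever $T^n(x) \neq 0$, and hence the two-sided bound
\[
\frac{1}{d_{n+1}(x)+1} < T^n(x) \leq \frac{1}{d_{n+1}(x)}.
\]
Once this is in hand, the lemma is essentially a comparison test, modulo separating off the case where the Pierce expansion terminates.

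First I would isolate the degenerate case $x \in \mathbb{Q} \cap [0,1]$ (including $x = 0$ and $x = 1$). By definition of the Pierce algorithm there is an index from which $T^n(x) = 0$ and $d_n(x) = \infty$, so under the conventions $c/\infty = 0$ and $T(0) = 0$ both $\sum_{n \in \mathbb{N}} T^n(x)$ and $\sum_{n \in \mathbb{N}} 1/d_n(x)$ collapse to finite sums. Thus any such $x$ lies in neither $\mathcal{D}_{\Div}$ nor $\mathcal{T}_{\Div}$, and it suffices to show the two sets agree on $\mathbb{I}$.

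For $x \in \mathbb{I}$ the iterates $T^n(x)$ are all strictly positive, so the displayed inequality above holds for every $n \geq 0$. Combining it with the crude bound $d_{n+1}(x) + 1 \leq 2 d_{n+1}(x)$, valid since $d_{n+1}(x) \geq 1$, gives
\[
\frac{1}{2 d_{n+1}(x)} \leq T^n(x) \leq \frac{1}{d_{n+1}(x)}.
\]
The comparison test then shows that $\sum_{n=1}^\infty T^n(x)$ and $\sum_{n=2}^\infty 1/d_n(x)$ converge or diverge together; prepending the single term $1/d_1(x)$ does not affect convergence, so the same equivalence holds for $\sum_{n=1}^\infty 1/d_n(x)$. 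Hence $x \in \mathcal{D}_{\Div} \iff x \in \mathcal{T}_{\Div}$, and combined with the previous paragraph we conclude $\mathcal{D}_{\Div} = \mathcal{T}_{\Div}$. There is no real obstacle; the only care required is to avoid applying $d_1(y) = \lfloor 1/y \rfloor$ at $y = 0$, which is exactly why the rational case is handled separately.
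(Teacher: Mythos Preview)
Your proof is correct and follows essentially the same approach as the paper's: both derive the two-sided bound $\frac{1}{2d_{n+1}(x)} \leq T^n(x) \leq \frac{1}{d_{n+1}(x)}$ from the floor-function definition $d_{n+1}(x) = \lfloor 1/T^n(x) \rfloor$ and finish with a comparison test. The only organizational difference is that you split into rational versus irrational $x$ up front, whereas the paper instead does a case analysis on whether $d_{n+1}(x)$ and $d_{n+2}(x)$ are finite, allowing it to treat all $x \in [0,1]$ uniformly; the substance is the same.
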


\begin{proof}
Let $x \in [0,1]$. Note first that since $f(x) \in \Sigma$, we have $d_{n+1}(x) \geq d_n(x)+1$ for each $n \in \mathbb{N}$. Then, by the definitions \eqref{Pierce algorithm 1} and \eqref{Pierce algorithm 2} and the expansion \eqref{Pierce expansion}, exactly one of the following holds:
\[
\begin{dcases}
T^n(x) = 0 = \frac{1}{d_{n+1}(x)}, &\text{if } d_{n+1}(x) = \infty; \\
0 < T^n(x) = \frac{1}{d_{n+1}(x)}, &\text{if } d_{n+1}(x) < \infty \text{ and } d_{n+2}(x) = \infty; \\
0 < \frac{1}{d_{n+1}(x)+1} \leq T^n(x) < \frac{1}{d_{n+1}(x)}, &\text{if } d_{n+1}(x) < \infty \text{ and } d_{n+2}(x) < \infty.
\end{dcases}
\]
Hence, in any case, the inequalities
\[
\frac{1}{2d_{n+1}(x)} \leq \frac{1}{d_{n+1}(x)+1} \leq T^n(x) \leq \frac{1}{d_{n+1}(x)}
\]
hold true, where the first inequality follows by noting that the $d_n(x)$ are $\mathbb{N}_\infty$-valued for each $n \in \mathbb{N}$ (see \eqref{Pierce algorithm 1} and \eqref{Pierce algorithm 2}). Therefore,
\[
\sum_{n \in \mathbb{N}} T^n(x) < \infty \iff \sum_{n \in \mathbb{N}} \frac{1}{d_{n+1}(x)} < \infty \iff \sum_{n \in \mathbb{N}} \frac{1}{d_n(x)} < \infty,
\]
and this completes the proof.
\end{proof}

Observe that, by definition, we have
\begin{align} \label{E equals Theta one}
\mathcal{D}_{\Div} = \left\{ x \in [0,1] : \sum_{n \in \mathbb{N}} \frac{1}{(d_n(x))^1} \text{ diverges} \right\} = \mathcal{D}^{(1)}_{\Div}.
\end{align}

For Corollaries \ref{sum of reciprocals hdim corollary}, \ref{sum of reciprocals dense G delta corollary}, and \ref{Shallit's theorem}, it is enough to prove the results for $\mathcal{D}_{\Div}$ only, since $\mathcal{D}_{\Div} = \mathcal{T}_{\Div}$ by Lemma \ref{E and F are equal}.

\begin{proof} [Proof of Corollary \ref{sum of reciprocals hdim corollary}]
By using \eqref{E equals Theta one} and Theorem \ref{Theta hdim theorem}, we find that $\hdim \mathcal{D}_{\Div} = \hdim \mathcal{D}_{\Div}^{(1)} = 1 - 1 = 0$.
\end{proof}

\begin{proof} [Proof of Corollary \ref{sum of reciprocals dense G delta corollary}]
By Fact \ref{basic facts}(1), it suffices to show that $\mathcal{D}_{\Div}$ is a dense $G_\delta$ subset of $[0,1]$. Since $\mathcal{D}_{\Div} = \mathcal{D}_{\Div}^{(1)}$ by \eqref{E equals Theta one}, and since we know from Theorem \ref{Theta dense G delta theorem} that $\mathcal{D}_{\Div}^{(1)}$ is dense $G_\delta$ in $[0,1]$, the corollary follows.
\end{proof}

\begin{proof}[Proof of Corollary \ref{Shallit's theorem}]
Due to Corollary \ref{sum of reciprocals dense G delta corollary}, it is sufficient to show that $\mathcal{D}_{\Div}$ is Lebesgue-null and has cardinality $\mathfrak{c}$. Since $\mathcal{D}_{\Div}^{(1)}$ has Lesbesgue measure zero by Theorem \ref{Theta Lebesgue measure theorem} and has cardinality $\mathfrak{c}$ by Theorem \ref{Theta dense G delta theorem}, the desired results follow from \eqref{E equals Theta one}.
\end{proof}

\section*{Acknowledgements}

I would like to express my gratitude to my advisor, Dr.\ Hanfeng Li, for his insightful comments during our weekly meetings. I also extend my sincere appreciation to the referees for their thorough review of the manuscript and their thoughtful suggestions, which have not only improved this paper but also provided valuable direction for future research, including potential developments toward a unified theory in real number representation systems.

\end{document}